\newtheorem{lemma}{Lemma}[section]
\newtheorem{proposition}{Proposition}[section]
\newtheorem{theorem}{Theorem}[section]
\newtheorem{corollary}{Corollary}[section]
\newtheorem{definition}{Definition}[section]
\newtheorem{remark}{Remark}[section]
\newtheorem{assumption}{Assumption}[section]
\def\section{\@startsection{section}{1}%
\z@{1\linespacing\@plus\linespacing}{1\linespacing}%
{\bf\centering}}
\def\subsection{\@startsection{subsection}{0}%
\z@{\linespacing\@plus\linespacing}{\linespacing}%
{\bf}}
\DeclareMathOperator{\supp}{supp}
\DeclareMathOperator{\Spec}{Spec}
\newcommand{\Norm}[2]{\left\Vert #1 \right\Vert_{#2}}
\providecommand{\seq}[1]{(#1_k)_{k\in \mathbb{N}}}
\providecommand{\sequ}[1]{(#1_n)_{n\in \mathbb{N}}}
\newcommand{\cS}{\mathcal{S}}
\newcommand{\cD}{\mathcal{D}}
\newcommand{\cA}{\mathcal{A}}
\newcommand{\cL}{\mathcal{L}}
\newcommand{\cK}{\mathcal{K}}
\newcommand{\cF}{\mathcal{F}}
\newcommand{\cE}{\mathcal{E}}
\newcommand{\cB}{\mathcal{B}}
\newcommand{\cH}{\mathcal{H}}
\newcommand{\cU}{\mathcal{U}}
\newcommand{\R}{\mathbb{R}}
\newcommand{\N}{\mathbb{N}}
\definecolor{mr}{rgb}{0.1,0.2,0.7}
\newcommand{\vertiii}[1]{{\left\vert\kern-0.25ex\left\vert\kern-0.25ex\left\vert #1
		\right\vert\kern-0.25ex\right\vert\kern-0.25ex\right\vert}}
\begin{document}
\title[Stability of eigenvalues and monotonicity]
{\small Stability of ground state eigenvalues of non-local Schr\"odinger operators with respect to potentials
and applications}
\author{Giacomo Ascione and J\'ozsef L{\H o}rinczi}
\address{Giacomo Ascione,
Scuola Superiore Meridionale \\
80138 Napoli, Italy}
\email{giacomo.ascione@unina.it}
\address{J\'ozsef L\H orinczi,
Alfr\'ed R\'enyi Institute of Mathematics \\
1053 Budapest, Hungary}
\email{lorinczi@renyi.hu}

\begin{abstract}
In a first part of this paper we investigate the continuity (stability) of the spectrum
of a class of non-local Schr\"odinger operators on varying the potentials. By imposing conditions of different
strength on the convergence of the sequence of potentials, we give either direct proofs to show the strong or
norm resolvent convergence of the so-obtained sequence of non-local Schr\"odinger operators, or via
$\Gamma$-convergence of the related positive forms for more rough potentials. In a second part we use these
results to show via a sequence of suitably constructed approximants that the ground states of massive or
massless relativistic Schr\"odinger operators with spherical potential wells are radially decreasing functions.

\bigskip
\noindent
\emph{Key-words}: $\Gamma$-convergence, Gagliardo seminorms, Bernstein functions, fractional Laplacian,
potential well, massive and massless relativistic Schr\"odinger operator, moving planes

\bigskip
\noindent
2020 {\it MS Classification}: Primary 47D08, 47A75; Secondary 47G20, 35R11
\end{abstract}

\maketitle

\baselineskip 0.55 cm
\tableofcontents

\newpage

\section{Introduction}
The basic motivation of this paper was to prove that, roughly, for a relativistic Schr\"odinger operator of the type
$(-\Delta + m^2)^{1/2}-m-v\mathbf 1_{B_a}$, where $m \geq 0$ is the rest mass of the particle, $B_a$ is a $d$-dimensional
ball of radius $a$ centred in the origin, and $v > 0$ is the depth of the spherical potential well (a coupling constant),
the ground state of this Hamiltonian is a radially monotone decreasing function in space, whenever it exists. Such a
behaviour is reasonable to expect but it turns out to require some work to show it rigorously, which is presented in this
paper for a wider class of non-local Schr\"odinger operators. An immediate implication of this
monotonicity is that precise \emph{local} (i.e., non-asymptotic) estimates can be derived on the ground state, as presented
in \cite{AL2}, which do not otherwise seem to come about by alternative methods currently available. Our approach is applicable
to a range of further problems regarding the local behaviour of solutions of non-local equations.

To carry this proof through we need to vary the potentials, which requires a control on convergence, i.e., a study of the
question whether for a given non-local operator $L$ and a sequence $\sequ V$ of potentials, for which the corresponding
non-local Schr\"odinger operators $\cH_n = L + V_n$ are assumed to have the ground state eigenvalues $\lambda_n$ and eigenfunctions
$\varphi_n$, these objects converge to the eigendata of $\cH = \lim_{n\to\infty} \cH_n$, when the limit is taken in an
appropriate sense. While there are classic results using strong/norm resolvent convergence due to Kato \cite[Ch.8, Sect.3]{K80}
and Weidmann \cite{W80} guaranteeing this stability (or continuity) of the eigenvalues for a broad class of self-adjoint operators
under conditions of varying strength, our approach here also aims to develop a framework based on $\Gamma$-convergence of positive
quadratic forms. This has several advantages. One is that the conditions we draw are rather mild and can be expressed directly
in explicit terms of the sequence of potentials, and by a verifiable criterion $\Gamma$-convergence implies the more conventional
strong resolvent convergence. Secondly, our framework also accommodates ``degenerate" cases when the sequence of potentials
supported on full space $\R^d$ replicates the Dirichlet exterior value problem in the limit, corresponding to the situation of
having zero potential in a bounded domain of $\R^d$ and infinite potential outside. Thirdly, a version of our approach also
covers the cases when instead of varying the potentials, the kinetic terms are varied, i.e., then continuity of the eigenvalues
of a sequence $\cH_n = L_n + V$ is considered, where, for instance, $L_n = (-\Delta)^{s_n}$, and $\sequ s \subset (0,2)$ is some
sequence convergent to one of the endpoints of the interval, which will be explored elsewhere.

Tools relying on $\Gamma$-convergence proved to be rather powerful in various contexts in the literature. For instance, it has
been used to show stability of variational eigenvalues for the fractional $p$-Laplacian by Brasco, Parini and Squassina \cite{BPS16}.
Dell'Antonio \cite{DA19,DA21} studied the convergence of regularized three-body Hamiltonians to contact (or point) interactions,
important in the understanding of the Efimov effect. Chen and Song used the related Mosco-convergence \cite{M93} of eigenvalues
of generators of subordinate L\'evy processes in domains \cite{CS06}, Song and Li to regular subspaces of one-dimensional
diffusion processes \cite{SL16}, and Li, Uemura and Ying similarly to regular Dirichlet subspaces \cite{LUY}. Ponce provided a
$\Gamma$-convergence based proof of the convergence of integrals initially studied by Br\'ezis, Bourgain and Mironescu \cite{P04}.

The main objects of this paper are non-local Schr\"odinger operators of the form $\cH= \Phi(-\Delta) + V$, where the kinetic term
is given by a Bernstein function $\Phi$ of the Laplacian $\Delta$, and the potential $V$ is a multiplication operator picked from
a Kato-class defined with respect to an integrability condition relating with $\Phi$. (For the details see Sections 2.1-2.3 below.)
Such non-local
Schr\"odinger operators have been first introduced and analyzed in \cite{HIL12}, covering massless (fractional) and massive
relativistic Schr\"odinger operators as well as others whose jump kernels may have a heavy or a light decay at infinity, and they
have been the object of extensive study recently.
Bernstein functions of the Laplacian and related
non-local equations have been used since also in other directions such as a generalization of the Caffarelli-Silvestre extension
technique \cite{KM}, maximum principles \cite{BL19,BL21}, the blow-up of solutions of stochastic PDE with white or coloured noise
\cite{DLN}, or the theory of embedded eigenvalues and scattering \cite{IW20,ILS}.

In this set-up, our work here has two main parts. In a first part we describe these operators and their related Dirichlet forms
and form domains, introducing a space $H^\Phi$ as a counterpart of the fractional Sobolev space $H^s$, $s \in (0,1)$, for more
general Bernstein
functions $\Phi$ (Section 2), and a corresponding semi-norm $[u]_{\Phi}$ generalizing the Gagliardo semi-norm. We will establish
that $H^\Phi$ is continuously embedded in $H^s$ (Proposition \ref{prop:embed}), allowing controls in terms of the different norms.
Also, we will define Dirichlet forms associated with
$\Phi(-\Delta)$ and with their perturbations by $\Phi$-Kato class potentials. Then, taking a sequence $\sequ V$ of suitably chosen
potentials convergent to a potential $V$, we establish the strong resolvent convergence of $\cH_n = \Phi(-\Delta) + V_n$, $n \in
\mathbb N$, to $\cH = \Phi(-\Delta) + V$ via the $\Gamma$-convergence of related positive quadratic forms (Theorem \ref{thm:stability}).
This will, in particular, yield the stability of the spectra in the sense discussed above (Corollary \ref{cor:stabilityspectrum}).
Under two further sets of conditions we obtain strong resolvent convergence (Theorems \ref{thm:stability3}) and norm resolvent
convergence (Theorem \ref{thm:stability4}) of $\sequ \cH$ directly, leading to the same conclusion.

In a second part (Section 4) we then turn to apply these stability results to the special case of the family
$$
\cH = (-\Delta + m^{2/\alpha})^{\alpha/2} - m - v\mathbf  1_{B_a}, \quad  0 < \alpha < 2, \; m \geq 0, \; a, v > 0,
$$
of fractional relativistic operators perturbed by spherical potential wells. By using appropriate mollifiers from the inside to
the outside of the potential well, first we construct approximants of the ground state of $\cH$ (Section 4.1). We show that they
are bounded smooth functions (Proposition \ref{prop:regular}) and replicate the ground state $\varphi$ of $\cH$ in the limit
(Proposition \ref{appx}). Making use of the sequence of approximants we then conclude that their images under the kinetic term
$L_{m,\alpha} = (-\Delta + m^{2/\alpha})^{\alpha/2} - m$ of the operator converge in $L^2$ sense to $L_{m,\alpha}\varphi$
(Theorem \ref{thm:regularphi0}). Showing that the ground state $\varphi$ is rotationally symmetric (Proposition \ref{rotsym}),
we then prove that it is radially non-increasing (Theorems \ref{prop:mono2}-\ref{monout}). A crucial technical step in order
to achieve this relies on an estimate on $L_{m,\alpha}w$ at its minimum, where $w$ is $C^2$, bounded, and antisymmetric with
respect to a hyperplane (Lemma \ref{lem:rellapcont}), relating with a maximum principle for narrow domains.

\section{Preliminaries}
\subsection{Bernstein functions and Gagliardo-type semi-norms}
We start by a word on the notation. We write $\Spec_{\rm ess}(A)$ for the essential spectrum, $\Spec_{\rm d}(A)$ for
the discrete spectrum, and $\Spec(A)$ for the full spectrum of operator $A$. The Schwartz space of complex-valued
functions on $\R^d$ will be denoted by $\cS(\R^d;\mathbb C)$, and we simply write $\cS(\R^d)$ for the similar object
for real-valued functions. Also, we denote the space of square-integrable complex-valued functions on $\R^d$ by
$L^2(\R^d;\mathbb{C})$.
For the Fourier transform of a function $h$ we write $\widehat{h}:=\cF[h]$ interchangeably, as
simpler or convenient. Scalar product in $\R^d$ will be denoted by pointed brackets $\langle \cdot, \cdot \rangle$.
For $L^p$ norms, we write for simplicity $\Norm{u}{p} = \Norm{u}{L^{p}(\R^d)}$, while spell out
the subscript in detail when we use a set other than $\R^d$; below we will work with several different norms but no
confusion will occur caused by this shorthand. A constant $C$ dependent on parameters $a, b, ...$ will be denoted by
$C(a,b,...)$. A ball of radius $r > 0$ centered in $x \in \R^d$ will be denoted by $B_r(x)$ and simply by $B_r$ when
$x=0$.

Recall that a Bernstein function is an infinitely differentiable non-negative function on the positive semi-axis,
whose derivative is completely monotone, i.e., an element of the convex cone
$$
\mathcal B = \left\{\Phi \in C^\infty((0,\infty)): \, \Phi \geq 0 \;\; \mbox{and} \:\; (-1)^{n}\frac{d^n \Phi}
{dx^n} \leq 0, \; \mbox{for all $n \in \mathbb N$}\right\}.
$$
In particular, Bernstein functions are increasing and concave. It is well-known that Bernstein functions have
the following canonical (L\'evy-Khintchine) representation: 
A function $\Phi:(0,\infty) \to \R$ belongs to $\cB$ if and only if there exist $a_\Phi,b_\Phi \ge 0$ and a measure
$\mu_\Phi$ on the positive semi-axis satisfying $\int_{(0,\infty)}(1\wedge t)\mu_\Phi(dt)<\infty$ such that for every
$z>0$ the expression
$$
\Phi(z)=a_\Phi+b_\Phi z+\int_{0}^{\infty}(1-e^{-zt})\mu_\Phi(dt)
$$	
holds. The triplet $(a_\Phi,b_\Phi,\mu_\Phi)$ determines $\Phi$ uniquely and vice versa. In particular, the measure
$\mu_\Phi$ is a L\'evy measure as it also satisfies $\int_{\R}(1\wedge t^2)\mu_\Phi(dt)<\infty$. Furthermore, a
Bernstein function $\Phi$ is said to be complete if its L\'evy measure $\mu_\Phi(ds)=m_\Phi(s)ds$ is such that
$m_\Phi$ is a completely monotone function. The set $\mathcal B_{\rm C} \subset \cB$ of complete Bernstein functions is
again a convex cone. Below we will restrict to the subset
$$
{\mathcal B}_0 = \big\{\Phi \in \mathcal B_{\rm C}: \, a_\Phi=b_\Phi=0 \big\}.
$$
A standard reference on Bernstein functions is \cite{SSV}, in which many details and examples are presented.

Fix $d \in \N$,
let $\Phi \in \cB$, and define the space
\begin{equation*}
H^{\Phi}(\R^d):= \Big\{h \in L^2(\R^d): \ \sqrt{\Phi(|\cdot|^2)} \; \widehat{h} \in L^2(\R^d)\Big\}.
\end{equation*}
Furthermore, for any $\Phi \in \mathcal B_{\rm C}$ we define the jump kernel
\begin{equation}
\label{jumpdens}
j_\Phi(r):=\int_0^{\infty}\frac{1}{\sqrt{2\pi t}}e^{-\frac{r^2}{4t}}m_\Phi(t)dt.
\end{equation}
We recall some properties of the function $j_\Phi$, for a proof see \cite[Prop. 2.2]{AL}.
\begin{lemma}
\label{lem:propj}
The jump kernel $j_\Phi$ satisfies the following properties:
\begin{itemize}
\item[(1)]
$j_\Phi(r)<\infty$ for every $r>0$;
\item[(2)]
$j_\Phi$ is a non-negative continuous and non-increasing function with $\lim_{r \to \infty}j_\Phi(r)=0$;
\item[(3)]
the integrability relations
$\int_{0}^{1}r^{d+1}j_\Phi(r)dr< \infty$
and $\int_{1}^{\infty}r^{d-1}j_\Phi(r)dr<\infty$
hold.
\end{itemize}
\end{lemma}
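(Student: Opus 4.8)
The plan is to read off all three assertions directly from the defining integral \eqref{jumpdens} by Tonelli's theorem, reducing everything to the two integrability relations contained in the L\'evy--Khintchine data, namely $\int_0^1 t\, m_\Phi(t)\,dt<\infty$ and $\int_1^\infty m_\Phi(t)\,dt<\infty$, together with the elementary fact that for every $c>0$ and $k\ge 0$ the map $t\mapsto t^{-k}e^{-c/t}$ is bounded on $(0,1]$ and tends to $0$ as $t\to 0^+$.

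First I would settle the easy parts of (1)--(2). Non-negativity of $j_\Phi$ is immediate since $m_\Phi\ge 0$, and $r\mapsto j_\Phi(r)$ is non-increasing because $r\mapsto e^{-r^2/(4t)}$ is non-increasing for each fixed $t$. For finiteness at a fixed $r>0$ I split the $t$-integral at $t=1$: on $(0,1]$ the Gaussian factor together with the normalising factor is bounded by $C(r,d)\,t$, so that piece is controlled by $\int_0^1 t\, m_\Phi(t)\,dt$; on $[1,\infty)$ the normalising factor and $e^{-r^2/(4t)}$ are both bounded, so that piece is controlled by $\int_1^\infty m_\Phi(t)\,dt$. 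Continuity of $j_\Phi$ on $(0,\infty)$ and the vanishing $j_\Phi(r)\to 0$ as $r\to\infty$ then follow from dominated convergence: on any $[r_0,\infty)$ with $r_0>0$ the integrand is dominated uniformly in $r$ by its value at $r_0$, which is integrable by the finiteness just established, while $e^{-r^2/(4t)}\to 0$ pointwise as $r\to\infty$.

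For (3) I would use Tonelli to interchange the $r$- and $t$-integrals and then estimate the resulting one-dimensional Gaussian moments, most conveniently after the substitution $r=2\sqrt{t}\,u$. For the first relation one bounds $\int_0^1 r^{d+1}e^{-r^2/(4t)}\,dr$ by $(d+2)^{-1}$ and also by $C_d\,t^{(d+2)/2}$; combined with the normalising factor, the resulting $t$-integrand is $\le C\,t\,m_\Phi(t)$ on $(0,1]$ and $\le C\,m_\Phi(t)$ on $[1,\infty)$, hence integrable. For the second relation one has $\int_1^\infty r^{d-1}e^{-r^2/(4t)}\,dr\le C_d\,t^{d/2}$ for all $t$, with the sharper bound $C_d\,t^{d/2}e^{-1/(8t)}$ for $t\le 1$; the point is that this $t^{d/2}$ growth is precisely matched and cancelled by the decay of the normalising factor in \eqref{jumpdens}, so that the $t$-integrand is once more $\le C\,t\,m_\Phi(t)$ on $(0,1]$ (using boundedness of $t^{-1}e^{-1/(8t)}$ there) and $\le C\,m_\Phi(t)$ on $[1,\infty)$, and the second L\'evy relation closes the argument.

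I expect the last cancellation to be the only genuinely delicate point: if one simply estimates $e^{-r^2/(4t)}\le 1$ in the $\int_1^\infty$ bound one is left with a power of $t$ that need not be integrable against $m_\Phi$ near infinity, so it is essential to retain that the truncated Gaussian moment $\int_1^\infty r^{d-1}e^{-r^2/(4t)}\,dr$ grows only like $t^{d/2}$, matching the normalisation. All remaining estimates are routine, and complete monotonicity of $m_\Phi$ enters only to guarantee that the L\'evy density $m_\Phi$ exists; no finer property of it is needed.
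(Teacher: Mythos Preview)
The paper does not prove this lemma; it merely cites \cite[Prop.~2.2]{AL}. Your direct verification from the defining integral is correct and is the natural approach.

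One caveat worth flagging: your cancellation argument for the second relation in (3) relies on the normalising factor in \eqref{jumpdens} decaying like $t^{-d/2}$. As printed, \eqref{jumpdens} has the factor $(2\pi t)^{-1/2}$, which would make your matching fail for $d\ge 2$ (the $t$-integrand on $[1,\infty)$ would be of order $t^{(d-1)/2}m_\Phi(t)$, not controlled by $\int_1^\infty m_\Phi(t)\,dt$). However, the computations elsewhere in the paper---the change of variables in the proof of Lemma~\ref{lem:GagliardotoFourier} and the opening line of the proof of Lemma~\ref{lem:Rsq}, which writes $t^{-d/2}$ explicitly---make clear that the intended normalisation is the $d$-dimensional Gaussian factor $(4\pi t)^{-d/2}$, and with that your argument goes through exactly as you describe. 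So the display \eqref{jumpdens} appears to contain a typo; you are using the correct kernel.
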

Next we characterize the space $H^{\Phi}(\R^d)$ by reformulating an earlier result that first appeared in
\cite[Lem. 3.1]{JS05}, stated for Schwartz space functions.
\begin{lemma}
\label{lem:GagliardotoFourier}
Let $u:\R^d \to \R$ be a measurable function. Then $u \in H^{\Phi}(\R^d)$ if and only if $u \in L^2(\R^d)$
and
\begin{equation*}
[u]_{\Phi}^2:=\frac{1}{2}\int_{\R^d}\int_{\R^d}|u(x+h)-u(x)|^2j_{\Phi}(|h|)dx dh<\infty.
\end{equation*}
\end{lemma}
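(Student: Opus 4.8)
The plan is to compute the double integral defining $[u]_\Phi^2$ on the Fourier side and to recognise it as $\Norm{\sqrt{\Phi(|\cdot|^2)}\,\widehat u}{2}^2$, whereupon the claimed equivalence is immediate. Both displayed conditions presuppose $u\in L^2(\R^d)$, so I assume this throughout. Since translation is a unitary operator on $L^2(\R^d)$, the function $x\mapsto u(x+h)-u(x)$ lies in $L^2(\R^d)$ for every fixed $h\in\R^d$, and Plancherel's theorem --- together with the fact that $\cF$ intertwines the shift by $h$ with multiplication by the unimodular factor $\xi\mapsto e^{i\langle\xi,h\rangle}$ --- gives, for every $h\in\R^d$,
\begin{equation*}
\int_{\R^d}|u(x+h)-u(x)|^2 \ud x=\int_{\R^d}\bigl|e^{i\langle\xi,h\rangle}-1\bigr|^2\,|\widehat u(\xi)|^2 \ud\xi .
\end{equation*}

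Next I would integrate this identity against the non-negative weight $j_\Phi(|h|)\ud h$. As all integrands are non-negative, Tonelli's theorem allows the order of integration to be exchanged with no integrability hypothesis, so that
\begin{equation*}
[u]_\Phi^2=\int_{\R^d}|\widehat u(\xi)|^2\,\Psi(\xi) \ud\xi\in[0,\infty],\qquad\text{where}\quad \Psi(\xi):=\tfrac12\int_{\R^d}\bigl|e^{i\langle\xi,h\rangle}-1\bigr|^2 j_\Phi(|h|) \ud h .
\end{equation*}
The heart of the proof is the claim that $\Psi(\xi)=\Phi(|\xi|^2)$ for every $\xi\in\R^d$. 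To establish it I would write $|e^{i\langle\xi,h\rangle}-1|^2=2\bigl(1-\cos\langle\xi,h\rangle\bigr)$, insert the definition \eqref{jumpdens} of $j_\Phi$, apply Tonelli once more so as to carry out the $h$-integration first against the Gauss kernel appearing in \eqref{jumpdens} --- which produces the factor $1-e^{-t|\xi|^2}$ --- and finally integrate over $t$ against $\mu_\Phi(\ud t)=m_\Phi(t)\ud t$; by the L\'evy--Khintchine representation of $\Phi$, which for $\Phi\in\mathcal B_0$ carries neither drift nor killing term, this yields $\int_0^\infty\bigl(1-e^{-t|\xi|^2}\bigr)\,\mu_\Phi(\ud t)=\Phi(|\xi|^2)$. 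The finiteness needed in order to split and recombine these iterated integrals, both near $h=0$ and near infinity, is precisely what Lemma \ref{lem:propj}(3) supplies, in combination with the elementary bound $0\le 1-\cos\langle\xi,h\rangle\le\min\{2,\tfrac12|\xi|^2|h|^2\}$. The same identity --- stating that $j_\Phi(|\cdot|)$ is the L\'evy density carrying the symbol $\Phi(|\cdot|^2)$ --- is a standard fact of subordination theory and may alternatively be quoted from \cite[Prop.~2.2]{AL} or \cite{SSV}.

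Putting the two displays together gives $[u]_\Phi^2=\int_{\R^d}\Phi(|\xi|^2)\,|\widehat u(\xi)|^2 \ud\xi=\Norm{\sqrt{\Phi(|\cdot|^2)}\,\widehat u}{2}^2$, understood as an identity in $[0,\infty]$. Hence, for $u\in L^2(\R^d)$, one has $[u]_\Phi<\infty$ exactly when $\sqrt{\Phi(|\cdot|^2)}\,\widehat u\in L^2(\R^d)$, which is the condition defining $H^\Phi(\R^d)$; this proves both implications at once. I expect the main obstacle to be the Fourier-side evaluation $\Psi(\xi)=\Phi(|\xi|^2)$, where one must keep track of constants through the two applications of Tonelli and correctly match the Gaussian computation with the L\'evy triplet of $\Phi$. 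By contrast, passing from the Schwartz-function version of \cite[Lem.~3.1]{JS05} to an arbitrary measurable $u$ is essentially free: working throughout with non-negative quantities lets both sides of the key identity be infinite simultaneously, so no density or approximation argument is needed.
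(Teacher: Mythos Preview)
Your proof is correct and more self-contained than the paper's. The paper proceeds by first massaging the L\'evy density $m_\Phi$ into a Radon measure $\nu_\Phi$ via the change of variable $s\mapsto 1/(4s)$, checks that the Laplace transform of $\nu_\Phi$ recovers $j_\Phi(\sqrt{\cdot})$, and then quotes \cite[Lem.~2.1, 3.1]{JS05} as a black box to conclude. You instead go straight to the identity $[u]_\Phi^2=\int_{\R^d}\Phi(|\xi|^2)|\widehat u(\xi)|^2\,d\xi$ by Plancherel plus Tonelli, and your key step $\Psi(\xi)=\Phi(|\xi|^2)$ is obtained by inserting the subordination formula \eqref{jumpdens}, integrating the Gaussian in $h$ to produce $1-e^{-t|\xi|^2}$, and invoking the L\'evy--Khintchine representation with $a_\Phi=b_\Phi=0$. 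This is exactly the computation the paper carries out later in Appendix~5.2 (for \eqref{moreequiv}), so you have in effect merged the two arguments and avoided the detour through $\nu_\Phi$ and \cite{JS05}. Your route is arguably cleaner and makes the passage from Schwartz functions to arbitrary $u\in L^2$ transparent, since everything is an equality in $[0,\infty]$; the paper's route has the advantage of leveraging an existing reference.
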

\begin{proof}
Let $\psi(x)=1/(4x)$. We determine a Radon measure ${\nu}_\Phi$ on $(0,\infty)$ such that
\begin{equation*}
\mu_\Phi(ds)=(4\pi s)^{d/2}\psi({\nu}_\Phi)(ds),
\end{equation*}
where $\psi({\nu}_\Phi)(A)={\nu}_\Phi(\psi^{-1}(A))$ for every Borel set $A \subset (0,\infty)$. Assume that ${\nu}_\Phi$
is absolutely continuous with respect to Lebesgue measure and keep denoting its density still by ${\nu}_\Phi$. Setting
$A=[t_1,t_2]$ with $0<t_1<t_2$, we have
\begin{equation*}
\psi({\nu}_\Phi)(A)=\int_{\frac{1}{4t_2}}^{\frac{1}{4t_1}}{\nu}_\Phi(s)ds=\int_{t_1}^{t_2}{\nu}_\Phi\left(\frac{1}{4s}\right)
\frac{1}{4s^2}ds.
\end{equation*}
Hence we get
\begin{equation*}
m_\Phi(s)=(4\pi s)^{d/2}{\nu}_\Phi\left(\frac{1}{4s}\right)\frac{1}{4s^2}
\end{equation*}
and thus
\begin{equation*}
{\nu}_\Phi(s)=\frac{s^{\frac{d}{2}-2}}{4\pi^{d/2}}m_\Phi\left(\frac{1}{4s}\right).
\end{equation*}
Computing its Laplace transform we obtain
\begin{align*}
\int_{0}^{\infty}e^{-rs}{\nu}_\Phi(s)ds&
=
\int_{0}^{\infty}e^{-rs}s^{\frac{d}{2}-2}\pi^{-\frac{d}{2}}4^{-1}m_\Phi\left(\frac{1}{4s}\right)ds\\
&=
\int_0^{\infty}\frac{1}{\sqrt{2\pi t}}e^{-\frac{r}{4t}}m_\Phi(t)dt=j_\Phi(\sqrt{r}).
\end{align*}
By a combination with \cite[Lem. 2.1, Lem. 3.1]{JS05} the result follows.
\end{proof}

In the following we will need to compare the semi-norm $[u]_{\Phi}$ with the standard Gagliardo semi-norm
of order $s \in (0,1)$, defined by
\begin{equation*}
	[[u]]_{s}^2:=\int_{\R^d}\int_{\R^d}\frac{|u(x+y)-u(x)|^2}{|y|^{d+2s}}dydx,
\end{equation*}
giving rise to the fractional Sobolev space (for a useful survey see \cite{DPV12})
$$
H^s(\R^d):= \big\{u \in L^2(\R^d): \ [[u]]_s^2< \infty\big\}.
$$
The space $H^s(\R^d)$ coincides with $H^\Phi(\R^d)$ for $\Phi(z)=z^s$, $s \in (0,1)$. For this case
Lemma \ref{lem:GagliardotoFourier} has been shown to hold in an even more general anisotropic setting
\cite{MN78}.

Clearly, $[\cdot]_\Phi$ is just a semi-norm on $H^\Phi(\R^d)$ and we can define a norm by
\begin{equation*}
\Norm{u}{\Phi}:=\Norm{u}{2}+[u]_\Phi.
\end{equation*}
Similarly, an equivalent norm on $H^s(\R^d)$ is given by
\begin{equation*}
\vertiii{u}_{s}:=\Norm{u}{2}+[[u]]_s.
\end{equation*}
In regard to this norm, we will need the following Sobolev-type inequalities (see, for instance,
\cite[Ths. 6.7, 6.10, 8.2]{DPV12} for a more general formulation).
\begin{proposition}
\label{thm:sobolevt}
The following properties hold:
\begin{itemize}
\item[(1)]
Let $d \ge 2$, or $d=1$ and $s<\frac{1}{2}$. There exists a constant $C=C_{d,s}>0$ such that
\begin{equation*}
\Norm{u}{p^\ast} \le C_{d,s}\vertiii{u}_{s},
\end{equation*}
for every $u \in H^s(\R^d)$, where $p^\ast=p^\ast(d,s)=\frac{2d}{d-2s}$.

\item[(2)]
Let $d=1$ and $s=\frac{1}{2}$. There exists a constant $C>0$ such that
\begin{equation*}
\Norm{u}{q} \le C\vertiii{u}_{s},
\end{equation*}
for every $u \in H^s(\R^d)$ and every $2 \le q<\infty$.

\item[(3)]
Let $d=1$ and $s>\frac{1}{2}$. There exists a constant $C=C_{s}>0$ such that
\begin{equation*}
\Norm{u}{C^{0,\eta}(\R^d)} \le C_{s}\vertiii{u}_{s},
\end{equation*}
for every $u \in H^s(\R^d)$, where $\eta=s-\frac{1}{2}$.
\end{itemize}
\end{proposition}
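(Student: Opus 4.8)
The plan is to reduce all three inequalities to classical fractional Sobolev theory; for $p=2$ they are contained in \cite[Ths. 6.7, 6.10, 8.2]{DPV12}, and here is the route I would take. The first thing I would do is pass to the Fourier side. As recalled after Lemma~\ref{lem:GagliardotoFourier}, $H^s(\R^d)=H^{\Phi}(\R^d)$ for $\Phi(z)=z^s$, and $j_{z^s}(r)$ is a constant multiple of $r^{-(d+2s)}$ (it is the jump kernel of $(-\Delta)^s$); hence, by Lemma~\ref{lem:GagliardotoFourier} together with Plancherel, $[[u]]_s^2=\kappa_{d,s}\int_{\R^d}|\xi|^{2s}|\widehat u(\xi)|^2\,d\xi$ for some $\kappa_{d,s}>0$. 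Consequently $\vertiii{u}_s$ is equivalent to $\Norm{(1+|\cdot|^2)^{s/2}\widehat u}{2}$, so it is enough to bound each left-hand side by a constant times that quantity.

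For part (1) one has $2s<d$ in all the listed cases. I would write $g:=\cF^{-1}\bigl[\,|\cdot|^s\widehat u\,\bigr]\in L^2(\R^d)$, so that $u=I_sg$, where $I_s$ is the Riesz potential of order $s$, i.e.\ convolution with a constant multiple of $|x|^{-(d-s)}$. Since $0<d-s<d$ and $\tfrac{1}{p^\ast}=\tfrac{1}{2}-\tfrac{s}{d}\in(0,\tfrac{1}{2})$, the Hardy--Littlewood--Sobolev inequality gives $\Norm{u}{p^\ast}=\Norm{I_sg}{p^\ast}\le C_{d,s}\Norm{g}{2}=C_{d,s}\Norm{|\cdot|^s\widehat u}{2}$, first for $u\in\cS(\R^d)$ and then for all $u\in H^s(\R^d)$ by density; this is at most $C_{d,s}'\,\vertiii{u}_s$. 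For part (2), with $d=1$ and $s=\tfrac{1}{2}$, the case $q=2$ is immediate, so I would take $q\in(2,\infty)$ and set $\sigma:=\tfrac{1}{2}-\tfrac{1}{q}\in(0,\tfrac{1}{2})$, which satisfies $\tfrac{2}{1-2\sigma}=q$. Since $|\xi|^{2\sigma}\le 1+|\xi|$, one checks $\vertiii{u}_\sigma\le C\,\vertiii{u}_{1/2}$ (this is the embedding $H^{1/2}(\R)\hookrightarrow H^{\sigma}(\R)$), and applying part (1) in dimension $d=1$ with exponent $\sigma<\tfrac{1}{2}$ yields $\Norm{u}{q}=\Norm{u}{p^\ast(1,\sigma)}\le C_\sigma\,\vertiii{u}_\sigma\le C_q\,\vertiii{u}_{1/2}$.

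For part (3), with $d=1$ and $s>\tfrac{1}{2}$, set $\eta=s-\tfrac{1}{2}\in(0,\tfrac{1}{2})$. As $2s>1$ we have $\int_{\R}(1+|\xi|^2)^{-s}\,d\xi<\infty$, so by Cauchy--Schwarz $\widehat u\in L^1(\R)$ with $\Norm{\widehat u}{1}\le C_s\Norm{(1+|\cdot|^2)^{s/2}\widehat u}{2}$; hence $u$ has a bounded continuous representative $\widetilde u(x)=\tfrac{1}{2\pi}\int_{\R}e^{ix\xi}\widehat u(\xi)\,d\xi$ with $\Norm{\widetilde u}{\infty}\le C_s\,\vertiii{u}_s$. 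For $x\neq y$ I would put $\rho=|x-y|$, write $\widetilde u(x)-\widetilde u(y)=\tfrac{1}{2\pi}\int_{\R}\bigl(e^{ix\xi}-e^{iy\xi}\bigr)\widehat u(\xi)\,d\xi$, use $|e^{ix\xi}-e^{iy\xi}|\le\min(2,\rho|\xi|)$ and split the integral at $|\xi|=1/\rho$. Cauchy--Schwarz then gives $\int_{|\xi|\le1/\rho}\rho|\xi|\,|\widehat u|\le\rho\bigl(\int_{|\xi|\le1/\rho}|\xi|^{2-2s}\,d\xi\bigr)^{1/2}\Norm{(1+|\cdot|^2)^{s/2}\widehat u}{2}\le C_s\,\rho^{\eta}\,\vertiii{u}_s$ (since $2-2s>-1$) and $\int_{|\xi|>1/\rho}2|\widehat u|\le2\bigl(\int_{|\xi|>1/\rho}|\xi|^{-2s}\,d\xi\bigr)^{1/2}\Norm{(1+|\cdot|^2)^{s/2}\widehat u}{2}\le C_s\,\rho^{\eta}\,\vertiii{u}_s$ (since $-2s<-1$). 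Adding, $|\widetilde u(x)-\widetilde u(y)|\le C_s\,\rho^{\eta}\,\vertiii{u}_s$ for all $x\neq y$, which with the sup bound yields $\Norm{u}{C^{0,\eta}(\R^d)}\le C_s\,\vertiii{u}_s$.

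The hard part is the endpoint H\"older exponent in (3): estimating crudely by $|e^{ix\xi}-e^{iy\xi}|\le2^{1-\eta}\rho^{\eta}|\xi|^{\eta}$ would require $\int_{\R}|\xi|^{2\eta}(1+|\xi|^2)^{-s}\,d\xi<\infty$, which diverges exactly at $\eta=s-\tfrac{1}{2}$; keeping the sharper bound $\min(2,\rho|\xi|)$ and splitting the frequencies at the scale $1/\rho$ is what repairs it. A real-variable alternative would be a Campanato--Morrey argument: estimate $|u(x)-u_{B}|$ by telescoping over dyadic balls shrinking to a Lebesgue point $x$, bounding each increment $|u_{B_{k+1}}-u_{B_k}|$ by a constant times $(\diam B_k)^{\eta}[[u]]_s$ through Cauchy--Schwarz and the Gagliardo seminorm, the resulting geometric series converging precisely because $\eta>0$. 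Parts (1) and (2) are by contrast routine reductions, to the Hardy--Littlewood--Sobolev inequality via the Riesz potential and to part (1) via $H^{1/2}(\R)\hookrightarrow H^{\sigma}(\R)$, respectively.
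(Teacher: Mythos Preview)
Your argument is correct. In the paper this proposition is not proved at all: it is simply quoted from \cite[Ths.~6.7, 6.10, 8.2]{DPV12}, the very reference you mention in your first sentence. So there is no ``paper's proof'' to compare against; you are supplying a self-contained justification that the paper chose to outsource.

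What you do is the standard Fourier-side route, and it is carried out cleanly. The reduction in (1) to Hardy--Littlewood--Sobolev via $u=I_sg$ with $g=\cF^{-1}[|\cdot|^s\widehat u]$ is exactly the classical proof of the fractional Sobolev embedding; in (2) the interpolation $H^{1/2}(\R)\hookrightarrow H^\sigma(\R)$ followed by (1) is correct (just note that your constant $C_q$ genuinely depends on $q$, as it must---the statement in the paper is slightly loose on this point); and in (3) the frequency splitting at scale $1/\rho$ with $|e^{ix\xi}-e^{iy\xi}|\le\min(2,\rho|\xi|)$ is precisely the sharp argument that recovers the endpoint H\"older exponent $\eta=s-\tfrac12$, as you correctly explain in your closing paragraph. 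The cited theorems in \cite{DPV12} argue instead through real-variable and extension techniques, so your approach is a legitimate and somewhat more direct alternative, but both lead to the same inequalities with the same exponents.
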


In this paper we will work under the following standing assumptions.
\begin{assumption}
\label{assPGc}
Let $\Phi \in \mathcal B_0$ and $j_\Phi$ be the jump kernel associated with $\Phi$ as given in \eqref{jumpdens}.
The following hold:
\begin{enumerate}
\item
For every $t>0$ we require
$\int_{\R^d}e^{-t\Phi(|\xi|^2)}d\xi<\infty$.
\item
We assume that there exist constants $C>0$ and $s \in (0,1)$ such that
\begin{equation*}
j_\Phi(r)\ge \frac{C}{r^{d+2s}}, \quad r \in (0,1].
\end{equation*}
\end{enumerate}
\end{assumption}
\noindent
Condition (1) has also been used in \cite[Assmp. 4.1]{HIL12}. Condition (2) is satisfied for any $\Phi$ regularly
varying at infinity, while the slowly varying $\Phi(z) = \log(1 + z)$ does not, which can be checked by the jump
kernel explicitly known in this case.

These conditions allow us to compare the spaces $H^s(\R^d)$ and $H^\Phi(\R^d)$ as laid out in the following statement.
\begin{proposition}
\label{prop:embed}
Let $\Phi \in \mathcal{B}_0$ satisfy Assumption \ref{assPGc}. There exists a constant $C=C(d,s)>0$ such that
for every $u \in H^{\Phi}(\R^d)$ we have
$\vertiii{u}_{s}\le C\Norm{u}{\Phi}$,
i.e., $H^{\Phi}(\R^d)$ is continuously embedded in $H^s(\R^d)$.
\end{proposition}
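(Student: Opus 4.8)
The plan is to bound the Gagliardo seminorm $[[u]]_s$ by the $H^\Phi$-norm, after which the inequality for $\vertiii{u}_s = \Norm{u}{2} + [[u]]_s$ follows immediately since the $L^2$-norm appears in $\Norm{u}{\Phi}$ as well. By Lemma \ref{lem:GagliardotoFourier} we have the identity
\begin{equation*}
[u]_\Phi^2 = \frac12 \int_{\R^d}\int_{\R^d} |u(x+h)-u(x)|^2 \, j_\Phi(|h|) \, dx\, dh,
\end{equation*}
so the natural strategy is to split the double integral defining $[[u]]_s^2$ over the regions $\{|h|\le 1\}$ and $\{|h|>1\}$ and handle each separately.

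For the near-diagonal part $\{|h|\le 1\}$, Assumption \ref{assPGc}(2) gives $j_\Phi(|h|)\ge C|h|^{-(d+2s)}$ on $(0,1]$, hence
\begin{equation*}
\int_{\R^d}\int_{|h|\le 1}\frac{|u(x+h)-u(x)|^2}{|h|^{d+2s}}\,dh\,dx \le \frac1C \int_{\R^d}\int_{|h|\le 1}|u(x+h)-u(x)|^2 j_\Phi(|h|)\,dh\,dx \le \frac{2}{C}[u]_\Phi^2.
\end{equation*}
For the far part $\{|h|>1\}$, the kernel $|h|^{-(d+2s)}$ is integrable in $h$ over $\{|h|>1\}$ (with mass depending only on $d,s$), so by the triangle inequality $|u(x+h)-u(x)|^2\le 2|u(x+h)|^2+2|u(x)|^2$ and Fubini-Tonelli one gets a bound of the form $C(d,s)\Norm{u}{2}^2$. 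Adding the two contributions yields $[[u]]_s^2 \le C(d,s)\big([u]_\Phi^2 + \Norm{u}{2}^2\big) \le C(d,s)\Norm{u}{\Phi}^2$, and taking square roots followed by the elementary inequality $\sqrt{a^2+b^2}\le a+b$ gives $\vertiii{u}_s \le C(d,s)\Norm{u}{\Phi}$. In particular every $u\in H^\Phi(\R^d)$ lies in $H^s(\R^d)$, which is the claimed continuous embedding.

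I expect no serious obstacle here; the only points requiring a little care are the bookkeeping of constants (which are harmless, depending only on $d$ and $s$ through the surface area of the unit sphere and $\int_{|h|>1}|h|^{-(d+2s)}dh = \omega_{d-1}/(2s)$) and the justification of Fubini-Tonelli in the far-field term, which is immediate since the integrand is non-negative and, after the triangle-inequality split, the $h$- and $x$-integrals factor. The essential input is Assumption \ref{assPGc}(2), which is exactly what makes the singular part of $j_\Phi$ dominate the fractional kernel near the origin; away from the origin $j_\Phi$ is bounded (Lemma \ref{lem:propj}(2)) but that is not even needed, as there we simply discard the kernel comparison and use $L^2$-integrability of $u$ directly.
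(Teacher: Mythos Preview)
Your proof is correct and follows essentially the same approach as the paper: split $[[u]]_s^2$ into the region $\{|h|\le 1\}$, where Assumption \ref{assPGc}(2) gives the comparison $|h|^{-(d+2s)}\le C^{-1}j_\Phi(|h|)$ and hence a bound by $[u]_\Phi^2$, and the region $\{|h|>1\}$, where the crude estimate $|u(x+h)-u(x)|^2\le 2|u(x+h)|^2+2|u(x)|^2$ together with integrability of $|h|^{-(d+2s)}$ yields a bound by $\Norm{u}{2}^2$. The paper's version differs only in cosmetic organization and in the tracking of constants.
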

\begin{proof}
Clearly, we only need to prove that $[[u]]_s^2 \le C\big([u]_\Phi^2+\Norm{u}{2}^2\big)$ for all $u \in H^\Phi(\R^d)$.
By using Assumption \ref{assPGc}(2),
\begin{align*}
[u]_\Phi^2&\ge \int_{\R^d}\int_{|y|\le 1}|u(x)-u(x+y)|^2j_\Phi(|y|)dydx\\
&\ge\int_{\R^d}\int_{|y|\le 1}|u(x)-u(x+y)|^2|y|^{-d-2s}dydx.
\end{align*}
On the other hand,
\begin{equation*}
\int_{\R^d}\int_{|y|\le 1}|u(x)-u(x+y)|^2|y|^{-d-2s}dy=[[u]]_s^2-\int_{\R^d}\int_{|y|\ge 1}|u(x)-u(x+y)|^2|y|^{-d-2s}dydx,
\end{equation*}
so that
\begin{align*}
[[u]]_s^2\le [u]_\Phi^2+\int_{\R^d}\int_{|y|\ge 1}|u(x)-u(x+y)|^2|y|^{-d-2s}dydx.
\end{align*}
Furthermore, we have
\begin{align*}
\int_{\R^d}&\int_{|y|\ge 1}|u(x)-u(x+y)|^2|y|^{-d-2s}dy=\int_{\R^d}\int_{|x-y|\ge 1}|u(x)-u(y)|^2|x-y|^{-d-2s}dydx\\
&\le
2\int_{\R^d}\int_{|x-y|\ge 1}(|u(x)|^2+|u(y)|^2)|x-y|^{-d-2s}dydx \\
&=
4\Norm{u}{2}^2\int_{|y| \ge 1}|y|^{-d-2s}dy =
\frac{4\sigma_{d}}{2s}\Norm{u}{2}^2,
\end{align*}
where $\sigma_d$ is the surface area of the $d$-dimensional unit sphere.
\end{proof}
We will also make use of the following result.
\begin{proposition}
\label{prop:compactemb}
Let $\Phi \in \cB_0$ satisfy Assumption \ref{assPGc} and define
$K_M=\{u \in H^\Phi(\R^d): \Norm{u}{\Phi} \le M\}$. There exists a constant $C$ such that
\begin{equation}\label{eq:equicont}
	\int_{\R^d}|u(x+h)-u(x)|^2dx \le C|h|^{2s}, \quad u \in K_M,
\end{equation}
for every $h \in \R^d$ with $0 \le |h| \le 1$.  Furthermore, if $\sequ u \subset H^{\Phi}(\R^d)$ with
$\sup_{n \in \N}\Norm{u_n}{\Phi}<\infty$, then there exists a function $u \in L^2(\R^d)$ and a non relabelled
subsequence $\sequ u$ such that the following hold:
\begin{enumerate}
\item
$u_n \rightharpoondown u$ in the weak topology of $L^2(\R^d)$;
\item
$u_n\mathbf{1}_K \to u \mathbf{1}_K$ in the strong topology of $L^2(\R^d)$, for every compact set $K \subset \R^d$;
\item
$u_n(x) \to u(x)$ for almost every $x \in \R^d$.
\end{enumerate}
\end{proposition}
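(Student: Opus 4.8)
The plan is to derive the equicontinuity estimate \eqref{eq:equicont} first, and then run a Fr\'echet--Kolmogorov / Riesz compactness argument combined with a diagonal extraction to obtain the weak limit, the local strong convergence, and the a.e.\ convergence.

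\textbf{Step 1: the uniform modulus of continuity.} For $u \in K_M$ and $h \in \R^d$ with $|h| \le 1$, I would split the translation according to whether $|h|$ is large or small relative to the increment. Using Assumption \ref{assPGc}(2), for $|h| \le 1$ we have $j_\Phi(|h|) \ge C|h|^{-d-2s}$, hence
\begin{equation*}
\int_{\R^d}|u(x+h)-u(x)|^2 dx \le \frac{|h|^{d+2s}}{C}\int_{\R^d}|u(x+h)-u(x)|^2 j_\Phi(|h|)\, dx .
\end{equation*}
That bound alone is not enough because the right-hand side still depends on the single direction $h$; instead I would follow the standard Gagliardo-space trick. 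Fix $h$ with $0<|h|\le1$ and integrate the elementary inequality in the variable $h'$ over the ball $B_{|h|}$: writing $u(x+h)-u(x) = (u(x+h)-u(x+h')) + (u(x+h')-u(x))$, taking $L^2_x$ norms, squaring and averaging over $h' \in B_{|h|}$ gives
\begin{equation*}
\int_{\R^d}|u(x+h)-u(x)|^2 dx \le \frac{2}{|B_{|h|}|}\int_{B_{|h|}}\!\!\int_{\R^d}\!\big(|u(x+h)-u(x+h')|^2 + |u(x+h')-u(x)|^2\big)dx\, dh'.
\end{equation*}
In the first inner integral substitute $x \mapsto x - h'$ so the increment becomes $u(x+h-h')-u(x)$ with $|h-h'| \le 2|h| \le 2$; in both terms the increment vector $k$ ranges over a ball of radius $2|h|$, where by Assumption \ref{assPGc}(2) (applied on $(0,1]$, splitting off the bounded region if $2|h|>1$, which only happens for $|h|$ in a compact range where the estimate is trivial from $\Norm{u}{2}\le M$) one has $|k|^{-d-2s} \le j_\Phi(|k|)\,\cdot\,|k|^{-d-2s}|k|^{d+2s}/C \le \ldots$; more precisely I bound $|k|^{d+2s} j_\Phi(|k|)^{-1} \le C^{-1}$ to get, with $|k| \le 2|h|$,
\begin{equation*}
|u(x+k)-u(x)|^2 \le \frac{(2|h|)^{d+2s}}{C}\,|u(x+k)-u(x)|^2 j_\Phi(|k|).
\end{equation*}
Integrating $dx$ and then $dk$ over $B_{2|h|}$, dividing by $|B_{|h|}| = c_d|h|^d$, the powers of $|h|$ collect to $|h|^{2s}$ and the remaining double integral is dominated by $2[u]_\Phi^2 \le 2M^2$. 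This yields \eqref{eq:equicont} with a constant $C = C(d,s,M)$; if a constant independent of $M$ is wanted one divides through, but the statement as written allows $C$ to depend on the class.

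\textbf{Step 2: compactness via Fr\'echet--Kolmogorov.} Given $\sequ u \subset H^\Phi(\R^d)$ with $\sup_n \Norm{u_n}{\Phi} =: M < \infty$, the sequence is bounded in $L^2(\R^d)$, so by Banach--Alaoglu a non-relabelled subsequence converges weakly, $u_n \rightharpoondown u$ in $L^2(\R^d)$, proving (1). For (2), fix a compact $K \subset \R^d$ and enlarge it to a ball $B_R \supset K$. On $B_R$ the family $\{u_n|_{B_R}\}$ is bounded in $L^2(B_R)$ and, by \eqref{eq:equicont}, satisfies $\sup_n \Norm{\tau_h u_n - u_n}{L^2(B_R)} \le C|h|^s \to 0$ as $h \to 0$, uniformly in $n$; hence by the Fr\'echet--Kolmogorov--Riesz theorem it is relatively compact in $L^2(B_R)$. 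Passing to a further non-relabelled subsequence, $u_n \to w$ strongly in $L^2(B_R)$ for some $w \in L^2(B_R)$; since strong $L^2(B_R)$ convergence implies weak $L^2(B_R)$ convergence and the weak limit from (1) restricted to $B_R$ is $u|_{B_R}$, we identify $w = u|_{B_R}$, so $u_n \mathbf 1_K \to u\mathbf 1_K$ in $L^2(\R^d)$. Finally, (3): exhausting $\R^d = \bigcup_m B_m$ and applying the above on each $B_m$, a Cantor diagonal argument produces a single non-relabelled subsequence converging to $u$ strongly in $L^2(B_m)$ for every $m$; along a further subsequence (again diagonalized across $m$) the convergence is a.e.\ on each $B_m$, hence a.e.\ on $\R^d$, and in particular $u \in L^2(\R^d)$ with $\Norm{u}{2} \le \liminf_n \Norm{u_n}{2} \le M$ by Fatou or weak lower semicontinuity.

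\textbf{Main obstacle.} The routine parts are the weak compactness and the diagonal bookkeeping; the one step that needs care is Step 1, specifically controlling the translation in an \emph{arbitrary} direction $h$ by the isotropic lower bound on $j_\Phi$, which is only assumed on $(0,1]$. The averaging trick above is what converts the pointwise-in-$h$ inequality into the genuine modulus-of-continuity estimate, and one must be careful that the auxiliary increments $k = h - h'$ stay in the range $|k| \le 2$ where Assumption \ref{assPGc}(2) is usable (handling the leftover regime $1 < |k| \le 2$ by absorbing it into the $\Norm{u}{2}^2$ term exactly as in the proof of Proposition \ref{prop:embed}). Once \eqref{eq:equicont} is in hand, the rest follows from standard functional-analytic compactness.
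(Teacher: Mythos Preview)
Your proof is correct and follows essentially the same approach as the paper: derive the $L^2$-modulus of continuity from the $r^{-d-2s}$ lower bound on $j_\Phi$, then run Fr\'echet--Kolmogorov on an exhausting family of balls with a diagonal extraction. The only cosmetic differences are that the paper obtains \eqref{eq:equicont} by first invoking the embedding $H^\Phi\hookrightarrow H^s$ (Proposition~\ref{prop:embed}) and then citing \cite[Lem.~A.1]{BLP14} rather than carrying out your averaging-over-$B_{|h|}$ argument by hand, and it multiplies by smooth cutoffs $\eta_\ell$ before applying \cite[Cor.~4.27]{B11} whereas you restrict directly to $B_R$.
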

\begin{proof}
(1) By the definition of $\Norm{\cdot}{\Phi}$, for every $u \in K_M$ we have $\Norm{u}{2} \le M$. Furthermore, by
Proposition \ref{prop:embed} there exists a constant $C_1$ such that for every $u \in K_M$ the bound $\vertiii{u}_s
\le C_1$ applies. Hence, by \cite[Lem. A.1]{BLP14} we have that
\begin{equation*}
\sup_{0 \le |h| \le 1}\int_{\R^d}\frac{|u(x+h)-u(x)|^2}{|h|^{2s}}dx \le C,
\end{equation*}
with a constant $C$ independent of $u \in K_M$. In particular, for every $h \in \R^d$ with $0 \le |h| \le 1$,
it follows that
\begin{equation*}
\int_{\R^d}|u(x+h)-u(x)|^2dx \le C|h|^{2s}, \quad u \in K_M.
\end{equation*}

Next consider (2) and let $\sequ u\subset H^{\Phi}(\R^d)$ and $M=\sup_{n \in \N}\Norm{u_n}{\Phi}$. Then $\Norm{u_n}{2}
\le M$ and there exists $u \in L^2(\R^d)$ such that a non-relabelled subsequence $\sequ u$ is such that $u_n
\rightharpoonup u$ in the weak topology of $L^2(\R^d)$. Furthermore, note that $\sequ u\subset K_M$, hence
\eqref{eq:equicont} holds. For any $\ell \in \N$, consider the ball $B_\ell$ and let $\eta_\ell \in C_{\rm c}^\infty(\R^d)$
be such that $\eta_\ell(x)=1$ for all $x \in B_\ell$, $\eta_\ell(x)=0$ for all $x \not \in \bar{B}_{\ell+1}$ and
$0 \le \eta_\ell(x) \le 1$ for all $x$. First, define $\widetilde{u}_n=\eta_1u_n$. Since they are all supported in
$\bar{B}_{2}$, the sequence is clearly equitight. Moreover, for every $h \in \R^d$ with $0 \le |h| \le 1$ we have
\begin{align}
\label{eq:equicont2}
\begin{split}
&\int_{\R^d}|\eta_1(x+h)u_n(x+h)-\eta_1(x)u_n(x)|^2dx\\
&\le \int_{\R^d}|\eta_1(x+h)-\eta_1(x)|^2|u_n(x+h)|^2dx + \int_{\R^d}|\eta_1(x)|^2|u_n(x+h)-u_n(x)|^2dx\\
& \le \left(\int_{\R^d}|u_n(x+h)|^2dx\right)\Norm{\nabla \eta_1}{\infty}^2|h| +\Norm{\eta_1}{\infty}^2
\int_{\R^d}|u_n(x+h)-u_n(x)|^2dx\\
&\le C(|h|+|h|^{2s}),			
\end{split}
\end{align}
where the constant $C$ is independent of $n$. Hence, by the Fr\'echet-Kolmogorov compactness theorem \cite[Cor. 4.27]{B11}
we obtain that there exists a subsequence $(\eta_1 u_n^{(1)})_{n\in\N}$ convergent in the strong topology of
$L^2(\R^d)$. In particular, this implies that $u_n^{(1)}$ converges in the strong topology of $L^2(B_1)$ and, since we
already know that $u_n \rightharpoonup u$, we also have $u_n^{(1)} \to u$ in $L^2(B_1)$, i.e., $\mathbf{1}_{B_1}u_n^{(1)}
\to \mathbf{1}_{B_1}u$ in $L^2(\R^d)$. Now assume that we already defined $u_n^{(\ell)}$ for $\ell \in \N$, and consider
$\widetilde{u}_n^{(\ell)}=\eta_{\ell+1}u_n^{(\ell)}$. Clearly, \eqref{eq:equicont2} still holds if we substitute
$\eta_{\ell+1}$ to $\eta_1$ and the sequence $\widetilde{u}_n^{(\ell)}$ is still equitight as they are all supported in
$B_{\ell+2}$. Thus there exists a convergent subsequence $(\eta_{\ell+1}u_n^{(\ell+1)})_{n\in\N}$ in the strong topology
of $L^2(\R^d)$. By a similar argument, we have $\mathbf{1}_{B_{\ell+1}}u_n^{(\ell+1)} \to \mathbf{1}_{B_{\ell+1}}u$ in
$L^2(\R^d)$. Clearly, for any compact set $K \subset \R^d$, the sequence $\sequ u$ is by construction such that
$\mathbf{1}_Ku_n^{(n)} \to \mathbf{1}_K u$ in $L^2(\R^d)$.

To obtain (3), let $\seq u$ be a subsequence as constructed in part (2) and reset the label. For any $\ell \in \N$ we
have $u_n \to u$ in $L^2(B_1)$ and thus there exists a set $E_1$ and a subsequence $(u_n^{(1)})_{n\in\N}$ such that
$|B_1 \setminus E_1|=0$ and $u_n^{(1)}(x) \to u(x)$ for all $x \in E_1$. Suppose we already defined the subsequence
$u_n^{(\ell)}$. Then $u_n^{(\ell)} \to u$ in $L^2(B_{\ell+1})$ and thus there exists a set $E_{\ell+1}$ and a subsequence
$u_n^{(\ell+1)}$ such that $|B_{\ell+1} \setminus E_{\ell+1}|=0$ and $u_n^{(\ell+1)}(x) \to u(x)$ for all $x \in E_{\ell+1}$.
The subsequence $(u_n^{(n)})_{n\in\N}$ satisfies all the properties in the statement.
\end{proof}

Another property we will use is the following.
\begin{proposition}
\label{thm:density}
The space $C_{\rm c}^\infty(\R^d)$ is dense in $H^\Phi(\R^d)$.
\end{proposition}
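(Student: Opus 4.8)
The plan is to reduce the statement to two standard operations on $H^\Phi$: truncation (cutting off to bounded support) and mollification (smoothing), each of which must be shown to be continuous, or at least asymptotically harmless, in the $\Norm{\cdot}{\Phi}$ norm. The main subtlety is that the semi-norm $[u]_\Phi$ is a nonlocal Gagliardo-type object, so the classical $H^1$ arguments need to be adapted using the characterisation in Lemma \ref{lem:GagliardotoFourier} and the integrability properties of $j_\Phi$ in Lemma \ref{lem:propj}(3).

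First I would handle mollification. Let $\rho_\varepsilon$ be a standard mollifier and set $u_\varepsilon = u * \rho_\varepsilon$. One shows $u_\varepsilon \to u$ in $L^2$ by the usual argument, and for the seminorm one writes, using Minkowski's integral inequality in the inner $L^2_x$-norm,
\begin{equation*}
[u_\varepsilon]_\Phi \le \int_{\R^d} \rho_\varepsilon(z)\left(\tfrac12\int_{\R^d}\int_{\R^d}|u(x+h-z)-u(x-z)|^2 j_\Phi(|h|)\,dx\,dh\right)^{1/2} dz = [u]_\Phi,
\end{equation*}
since the inner quantity is translation invariant and equals $[u]_\Phi$. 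Hence $u_\varepsilon \in H^\Phi(\R^d)$ with $[u_\varepsilon]_\Phi \le [u]_\Phi$. To get convergence $[u_\varepsilon - u]_\Phi \to 0$, I would first approximate $u$ in the $L^2$ sense by a compactly supported $L^\infty$ function at the level of the weak-type argument, then invoke the Fourier characterisation: $[u_\varepsilon - u]_\Phi^2 = \int_{\R^d}\Phi(|\xi|^2)\,|\widehat{\rho_\varepsilon}(\xi)-1|^2\,|\widehat u(\xi)|^2\,d\xi$, which tends to $0$ by dominated convergence because $|\widehat{\rho_\varepsilon}(\xi)-1| \to 0$ pointwise, $|\widehat{\rho_\varepsilon}|\le 1$, and $\Phi(|\xi|^2)|\widehat u(\xi)|^2 \in L^1$ by definition of $H^\Phi$. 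Here using the Fourier side via Lemma \ref{lem:GagliardotoFourier} is cleaner than working directly with $j_\Phi$. This reduces the problem to showing that compactly supported functions in $H^\Phi$ are dense, since $u_\varepsilon$ inherits compact support from $u$.

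It therefore remains to show that every $u \in H^\Phi(\R^d)$ can be approximated in $\Norm{\cdot}{\Phi}$ by compactly supported elements of $H^\Phi(\R^d)$. Take $\eta_R(x) = \eta(x/R)$ with $\eta \in C_c^\infty$, $\eta \equiv 1$ on $B_1$, $0\le\eta\le1$, supported in $B_2$, and set $u_R = \eta_R u$. Then $u_R \to u$ in $L^2$ by dominated convergence, so the work is to bound $[u_R - u]_\Phi$. Writing $w_R = (1-\eta_R)u$ and using the algebraic identity together with the splitting into $\{|h|\le 1\}$ and $\{|h|>1\}$, one estimates
\begin{equation*}
[w_R]_\Phi^2 \le \int_{\R^d}\int_{|h|\le 1}|(1-\eta_R)(x+h)-(1-\eta_R)(x)|^2|u(x+h)|^2 j_\Phi(|h|)\,dx\,dh + C\Norm{w_R}{2}^2 + (\text{tail}),
\end{equation*}
where the first term is controlled by $\Norm{\nabla\eta_R}{\infty}^2\int_{|h|\le1}|h|^2 j_\Phi(|h|)\,dh \cdot \Norm{u}{2}^2 \le C R^{-2}\Norm{u}{2}^2$ using Lemma \ref{lem:propj}(3) (note $\int_0^1 r^{d+1}j_\Phi(r)\,dr<\infty$ gives exactly $\int_{|h|\le1}|h|^2 j_\Phi(|h|)\,dh<\infty$), and the long-range tail $\int\int_{|h|>1}|w_R(x+h)-w_R(x)|^2 j_\Phi(|h|)\,dx\,dh \le 4\Norm{w_R}{2}^2\int_{|h|>1}j_\Phi(|h|)\,dh$ is finite by $\int_1^\infty r^{d-1}j_\Phi(r)\,dr<\infty$. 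All the $w_R$-terms vanish as $R\to\infty$ since $\Norm{w_R}{2}\to 0$. Combining, $[u_R - u]_\Phi = [w_R]_\Phi \to 0$, so $u_R \to u$ in $H^\Phi$.

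Finally, composing the two steps: given $u$ and $\delta>0$, pick $R$ large so $\Norm{u_R - u}{\Phi}<\delta/2$, then mollify $u_R$ to get $\varphi = (u_R)_\varepsilon \in C_c^\infty(\R^d)$ with $\Norm{\varphi - u_R}{\Phi}<\delta/2$; then $\Norm{\varphi - u}{\Phi}<\delta$. The hard part is the nonlocal cutoff estimate for $[w_R]_\Phi$ — specifically making sure the near-diagonal contribution really is $O(R^{-2})$ rather than merely bounded, which is where the strong integrability $\int_0^1 r^{d+1}j_\Phi(r)\,dr<\infty$ of Lemma \ref{lem:propj}(3) (one power better than the $r^{d-1}$ needed for the Lévy condition) is essential; everything else is routine once the Fourier characterisation is invoked for the mollification step.
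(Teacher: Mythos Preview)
Your overall strategy (truncate, then mollify) matches the paper's, and your mollification step via the Fourier characterisation is cleaner than the paper's real-space argument. For truncation the paper splits the double integral over regions $B_n$, $B_{2n}\setminus B_n$, $\R^d\setminus B_{2n}$ in both variables, which forces it to control $\frac{1}{n^2}\int_{B_{cn}}|h|^2 j_\Phi(|h|)\,dh$ for growing balls and hence to prove an auxiliary moment lemma (Lemma~\ref{lem:Rsq}). Your fixed-scale split $\{|h|\le 1\}\cup\{|h|>1\}$ avoids this entirely and is simpler.

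However, your truncation estimate has a genuine gap. Writing $\bar\eta_R=1-\eta_R$ and decomposing
\[
w_R(x+h)-w_R(x)=\big(\bar\eta_R(x+h)-\bar\eta_R(x)\big)u(x+h)+\bar\eta_R(x)\big(u(x+h)-u(x)\big),
\]
the short-range contribution of the \emph{second} summand,
\[
\int_{\R^d}\int_{|h|\le 1}\bar\eta_R(x)^2\,|u(x+h)-u(x)|^2\,j_\Phi(|h|)\,dh\,dx,
\]
is not bounded by $C\Norm{w_R}{2}^2$ as your displayed inequality suggests; there is no way to extract $\Norm{w_R}{2}$ from a term that genuinely involves increments of $u$. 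Consequently your claim that ``all the $w_R$-terms vanish as $R\to\infty$ since $\Norm{w_R}{2}\to 0$'' does not cover this piece. The fix is immediate: the integrand is pointwise dominated by $|u(x+h)-u(x)|^2 j_\Phi(|h|)\in L^1(\R^d\times\R^d)$ (since $[u]_\Phi<\infty$), and $\bar\eta_R(x)^2\to 0$ pointwise, so dominated convergence gives the limit $0$. With this correction your argument is complete and, overall, shorter than the paper's.
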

\noindent
This result follows from \cite[Th. 3.3]{HIL12} but we provide a more constructive proof directly using $j_\Phi$ in
Appendix 5.1 below. For later use we also note the following property.
\begin{lemma}
Let $\Phi \in \cB_0$. If $u,v \in H^{\Phi}(\R^d) \cap L^\infty(\R^d)$, then $uv \in H^{\Phi}(\R^d)$. Moreover,
the same holds if $v \in C^{\infty}_{\rm c}(\R^d)$ and $u \in H^{\Phi}(\R^d)\cap L^\infty_{\rm loc}(\R^d)$.
\end{lemma}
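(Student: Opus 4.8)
The plan is to use the Gagliardo-type characterization of $H^{\Phi}(\R^d)$ provided by Lemma \ref{lem:GagliardotoFourier}, reducing the claim $uv \in H^\Phi(\R^d)$ to two things: that $uv \in L^2(\R^d)$, and that $[uv]_\Phi < \infty$. The first is immediate in the bounded case since $\Norm{uv}{2} \le \Norm{v}{\infty}\Norm{u}{2}$. For the seminorm, I would start from the algebraic identity
\begin{equation*}
u(x+h)v(x+h) - u(x)v(x) = \big(u(x+h)-u(x)\big)v(x+h) + u(x)\big(v(x+h)-v(x)\big),
\end{equation*}
square it, use $|a+b|^2 \le 2|a|^2 + 2|b|^2$, multiply by $j_\Phi(|h|)$ and integrate in $x$ and $h$ over $\R^d \times \R^d$. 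This yields
\begin{equation*}
[uv]_\Phi^2 \le 2\Norm{v}{\infty}^2 \, [u]_\Phi^2 + 2\Norm{u}{\infty}^2 \, [v]_\Phi^2 < \infty,
\end{equation*}
which gives the first assertion.

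For the second assertion, the difficulty is that $u$ is no longer globally bounded, only in $L^\infty_{\rm loc}(\R^d)$, so the estimate above cannot be applied directly to the term $u(x)(v(x+h)-v(x))$. The point to exploit is that $v \in C^\infty_{\rm c}(\R^d)$: fix $R>0$ with $\supp v \subset B_R$, so that $v(x+h)-v(x) = 0$ whenever both $x \notin B_{R+|h|}$ and $x+h \notin B_{R}$, i.e.\ the integrand in the second term is supported, for $|h|\le 1$ say, in $x \in B_{R+1}$, where $u$ is bounded. One must still control the contribution from $|h| > 1$. Here I would split the double integral for $[uv]_\Phi$ into the region $|h| \le 1$ and $|h| > 1$. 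On $|h| \le 1$, the decomposition above works after replacing $\Norm{u}{\infty}$ by $\Norm{u}{L^\infty(B_{R+1})}$ in the second term and noting that the first term is bounded by $\Norm{v}{\infty}^2 \int_{\R^d}\int_{|h|\le1}|u(x+h)-u(x)|^2 j_\Phi(|h|)\,dh\,dx \le 2\Norm{v}{\infty}^2[u]_\Phi^2$. On $|h| > 1$, one uses the crude bound $|u(x+h)v(x+h)-u(x)v(x)|^2 \le 2|u(x+h)v(x+h)|^2 + 2|u(x)v(x)|^2$, so that
\begin{equation*}
\int_{\R^d}\int_{|h|>1}|u(x+h)v(x+h)-u(x)v(x)|^2 j_\Phi(|h|)\,dh\,dx \le 4\Norm{uv}{2}^2 \int_{|h|>1}j_\Phi(|h|)\,dh,
\end{equation*}
and the last integral is finite by Lemma \ref{lem:propj}(3) (the tail condition $\int_1^\infty r^{d-1}j_\Phi(r)\,dr<\infty$ in polar coordinates). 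Since $uv \in L^2(\R^d)$ as $uv$ is supported in $B_R$ and bounded there, this term is finite.

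Combining the two regions gives $[uv]_\Phi < \infty$, and with $uv \in L^2(\R^d)$ Lemma \ref{lem:GagliardotoFourier} yields $uv \in H^\Phi(\R^d)$. The main obstacle, as indicated, is the lack of global boundedness of $u$ in the second part: the resolution is the observation that $v$ being compactly supported both localizes the ``near'' interactions $|h|\le 1$ to a ball where $u$ is bounded, and makes $uv$ globally $L^2$ so that the ``far'' interactions are absorbed by the integrability tail of $j_\Phi$. No regularity of $u$ beyond membership in $H^\Phi$ is needed, and no fractional-Leibniz-type estimate is required — everything reduces to the elementary product-difference identity and the two integrability properties of $j_\Phi$ from Lemma \ref{lem:propj}.
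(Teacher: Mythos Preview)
Your proof is correct. The first assertion is handled essentially identically to the paper. For the second assertion you take a different (and somewhat cleaner) route: the paper decomposes the double integral according to the support $K$ of $v$, writing $[uv]_\Phi^2$ as an integral over $K\times K$ plus cross terms over $(\R^d\setminus K)\times K$, and then further splits the cross terms using an $\varepsilon$-enlargement $K_\varepsilon$, controlling the thin-shell piece via the Lipschitz bound $|v(x)-v(y)|\le \Norm{\nabla v}{\infty}|x-y|$ together with $\int_{B_R}|y|^2 j_\Phi(|y|)\,dy<\infty$. Your decomposition by $|h|\le 1$ versus $|h|>1$ bypasses both the $\varepsilon$-neighborhood and the gradient estimate: the near part is localized to $x\in B_{R+1}$ purely by the support of $v$, and the far part is absorbed into $\Norm{uv}{2}^2\int_{|h|>1}j_\Phi(|h|)\,dh$. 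Both arguments ultimately rest on the same two integrability properties of $j_\Phi$ from Lemma~\ref{lem:propj}(3); yours uses slightly less structure of $v$ (no gradient bound needed), while the paper's version makes the dependence on the geometry of $\supp v$ more explicit.
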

\begin{proof}
To show the first statement, observe that
	\begin{align*}
		[uv]_{\Phi}^2&=\int_{\R^d}\int_{\R^d}|u(x)v(x)-u(y)v(y)|^2j_{\Phi}(|x-y|)dxdy\\
		&\le 2\int_{\R^d}\int_{\R^d}|u(x)v(x)-u(x)v(y)|^2j_{\Phi}(|x-y|)dxdy\\
		&\qquad + 2\int_{\R^d}\int_{\R^d}|u(x)v(x)-u(y)v(x)|^2j_{\Phi}(|x-y|)dxdy
		\le 2(\Norm{u}{\infty}^2[v]_{\Phi}^2+\Norm{v}{\infty}^2[u]_{\Phi}^2).
	\end{align*}
To prove the second, let $K$ be the support of $v$ and split up the integral like
	\begin{align*}
	[uv]_{\Phi}^2&=\int_{\R^d}\int_{\R^d}|u(x)v(x)-u(y)v(y)|^2j_{\Phi}(|x-y|)dxdy\\
	&=\int_{K}\int_{K}|u(x)v(x)-u(y)v(y)|^2j_{\Phi}(|x-y|)dxdy\\
	&\qquad +2\int_{\R^d \setminus K}\int_{K}|u(x)v(x)|^2j_{\Phi}(|x-y|)dxdy
	=:I_1+I_2.
\end{align*}
Arguing as before, we get
\begin{equation*}
	I_1 \le (\Norm{u}{L^\infty(K)}^2[v]_{\Phi}^2+\Norm{v}{L^\infty(K)}^2[u]_{\Phi}^2).
\end{equation*}
To handle $I_2$, fix $\varepsilon>0$ and let $K_\varepsilon=\{x \in \R^d: \ {\rm dist}(x,K)\le \varepsilon\}$. Then
\begin{equation*}
I_2=2\int_{\R^d \setminus K_\varepsilon}\int_{K}|u(x)v(x)|^2j_{\Phi}(|x-y|)dxdy+2\int_{K_\varepsilon\setminus K}
\int_{K}|u(x)v(x)|^2j_{\Phi}(|x-y|)dxdy=I_3+I_4.
\end{equation*}
Furthermore,
\begin{equation*}
	I_3 \le 2\Norm{v}{\infty}^2\int_{K}|u(x)|^2\int_{\R^d \setminus K_\varepsilon}j_{\Phi}(|x-y|)dydx.
\end{equation*}
Clearly, if $x \in K$, we have $\R^d \setminus K_\varepsilon \subset \R^d \setminus B_\varepsilon(x)$ and then
\begin{equation*}
	I_3 \le 2\Norm{v}{\infty}^2\int_{K}|u(x)|^2\int_{\R^d \setminus B_\varepsilon(x)}j_{\Phi}(|x-y|)dydx
\le 2 \Norm{v}{\infty}^2\Norm{u}{2}^2\int_{\R^d \setminus B_\varepsilon}j_{\Phi}(|y|)dy<\infty.
\end{equation*}
Finally,  observe that if $y \in K_\varepsilon \setminus K$, then $u(x)v(y)=0$ and we have
\begin{align*}
	I_4&=2\int_{K_\varepsilon\setminus K}\int_{K}|u(x)|^2|v(x)-v(y)|^2j_{\Phi}(|x-y|)dxdy\\
	&\le 2\Norm{|\nabla v|}{\infty}^2\int_{K}|u(x)|^2\int_{K_\varepsilon\setminus K}|x-y|^2j_{\Phi}(|x-y|)dydx.
\end{align*}
Clearly, there exists $R>0$ such that $K_\varepsilon \setminus K \subset B_R(x)$ and then
\begin{align*}
	I_4&\le 2\Norm{|\nabla v|}{\infty}^2\int_{K}|u(x)|^2\int_{B_R(x)}|x-y|^2j_{\Phi}(|x-y|)dydx\\
	& \le 2\Norm{|\nabla v|}{\infty}^2\Norm{u}{2}^2\int_{B_R(x)}|y|^2j_{\Phi}(|y|)dy<\infty.
\end{align*}
\end{proof}

\subsection{Bernstein functions of the Laplacian}
Let $\Delta$ be the Laplacian and $\Phi \in \cB_0$. For any measurable function $u:\R^d \to \R$ we define
\begin{equation*}
\Phi(-\Delta)u(x)=\lim_{\varepsilon \to 0}\int_{\R^d\setminus B_\varepsilon(x)}(u(x)-u(y))j_{\Phi}(|x-y|)dy,
\end{equation*}
provided the involved quantities are finite. As $u \in H^1(\R^d)$, this expression is a direct consequence of
the Bochner-Phillips subordination formula (see, for instance, \cite[Ex. 11.6]{SSV}). An alternative definition
can be made via the following expression (for a proof see \cite[Prop. 2.6]{AL}).
\begin{proposition}
Let $\Phi \in \cB_0$, $x \in \R^d$ and a measurable function $u:\R^d \to \R$. Then
\begin{equation}
\label{eq:Phidelta}
\Phi(-\Delta)u(x)=-\frac{1}{2}\int_{\R^d}(u(x+h)-2u(x)+u(x-h))j_{\Phi}(|h|)dh,
\end{equation}
provided the integral at the right-hand side is convergent.
\end{proposition}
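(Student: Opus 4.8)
The strategy is to rewrite the defining expression for $\Phi(-\Delta)u(x)$ as an integral over the increment variable $h=y-x$, and then to use the radial — hence even — symmetry of $j_\Phi$ to symmetrize the integrand, which converts the first difference $u(x)-u(x+h)$ into the second difference $u(x+h)-2u(x)+u(x-h)$ appearing on the right-hand side; a dominated convergence argument will then remove the truncation.

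First I would substitute $y=x+h$ in the integral defining $\Phi(-\Delta)u(x)$. Since $\R^d\setminus B_\varepsilon(x)$ becomes $\{h\in\R^d:|h|\ge\varepsilon\}$ under this change of variables, I get
\[
\int_{\R^d\setminus B_\varepsilon(x)}(u(x)-u(y))\,j_\Phi(|x-y|)\,dy=\int_{\{|h|\ge\varepsilon\}}(u(x)-u(x+h))\,j_\Phi(|h|)\,dh .
\]
Because $h\mapsto j_\Phi(|h|)$ is even and $\{|h|\ge\varepsilon\}$ is invariant under $h\mapsto-h$, the integrand may be replaced by its symmetrization in $h$, using the elementary identity $\tfrac12\big((u(x)-u(x+h))+(u(x)-u(x-h))\big)=-\tfrac12\big(u(x+h)-2u(x)+u(x-h)\big)$, so that
\[
\int_{\{|h|\ge\varepsilon\}}(u(x)-u(x+h))\,j_\Phi(|h|)\,dh=-\frac12\int_{\{|h|\ge\varepsilon\}}\big(u(x+h)-2u(x)+u(x-h)\big)\,j_\Phi(|h|)\,dh .
\]
Finally I would let $\varepsilon\to 0$: as $\varepsilon\downarrow 0$ the integrands converge pointwise to $\big(u(x+h)-2u(x)+u(x-h)\big)j_\Phi(|h|)$ and are dominated by $\big|u(x+h)-2u(x)+u(x-h)\big|\,j_\Phi(|h|)$, which lies in $L^1(\R^d)$ precisely because, by hypothesis, the integral on the right-hand side converges. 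Dominated convergence then shows that the limit defining $\Phi(-\Delta)u(x)$ exists and equals $-\tfrac12\int_{\R^d}\big(u(x+h)-2u(x)+u(x-h)\big)j_\Phi(|h|)\,dh$, as claimed.

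The step needing the most care is the symmetrization, since the two halves $\int_{\{|h|\ge\varepsilon\}}|u(x)-u(x\pm h)|\,j_\Phi(|h|)\,dh$ into which one would like to split the integral need not be finite: Lemma \ref{lem:propj}(3) only gives $\int_1^\infty r^{d-1}j_\Phi(r)\,dr<\infty$, not $\int_1^\infty r^{d}j_\Phi(r)\,dr<\infty$, so for a $u$ growing linearly at infinity the first-difference integral converges only conditionally. I would resolve this by carrying out the symmetrization on the doubly truncated integrals over the shells $\{\varepsilon\le|h|\le R\}$, which are bounded, invariant under $h\mapsto-h$, and on which every integral involved is absolutely convergent, and only afterwards letting $R\to\infty$, the limit existing because the second-difference integrand is in $L^1(\R^d)$. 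In the case relevant for the applications, where $u$ is bounded, this subtlety disappears: then $\int_{\{|h|\ge\varepsilon\}}|u(x)-u(x+h)|\,j_\Phi(|h|)\,dh\le 2\Norm{u}{\infty}\int_{\{|h|\ge\varepsilon\}}j_\Phi(|h|)\,dh<\infty$ by Lemma \ref{lem:propj}(3), and the symmetrization is immediate. Beyond this bookkeeping, the proof is just a change of variables, a symmetry, and dominated convergence.
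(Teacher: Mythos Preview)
Your argument is correct and is the standard one: change variables $y=x+h$, symmetrize using the evenness of $h\mapsto j_\Phi(|h|)$, and pass to the limit via dominated convergence. The paper itself does not prove this proposition but simply refers to \cite[Prop.~2.6]{AL}; your proof is precisely what one would expect to find there.

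One small remark on your caveat about symmetrization: if the truncated integral $\int_{\{|h|\ge\varepsilon\}}(u(x)-u(x+h))j_\Phi(|h|)\,dh$ is taken to be a Lebesgue integral (which is the natural reading of ``provided the involved quantities are finite'' in the paper's definition), then the substitution $h\mapsto -h$ shows that $\int_{\{|h|\ge\varepsilon\}}|u(x)-u(x-h)|\,j_\Phi(|h|)\,dh=\int_{\{|h|\ge\varepsilon\}}|u(x)-u(x+h)|\,j_\Phi(|h|)\,dh$, so the two halves are simultaneously finite or infinite and the symmetrization is immediate. Your shell-truncation variant $\{\varepsilon\le|h|\le R\}$ is nonetheless a clean way to make the argument work under the weakest possible reading of the hypotheses, and it costs nothing.
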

We want to establish a condition under which the integral at the right hand side of \eqref{eq:Phidelta} is
absolutely convergent. In \cite{AL} we considered a H\"older-Zygmund type condition. Here we aim to use a
more classical approach in order to prove that whenever $u \in H^{\Phi}(\R^d)$, we can define $\Phi(-\Delta)
u \in L^2(\R^d)$ by extension of a bounded linear operator. This property is well-known for the case of the
fractional Laplacian corresponding to $\Phi(z)=z^s$, see \cite[Sect. 3]{DPV12}. First we prove the following
technical lemma.
\begin{lemma}
\label{lem:existence1}
If $u \in C^2(\R^d) \cap L^\infty(\R^d)$, then the expression at the right-hand side of \eqref{eq:Phidelta}
is absolutely convergent for every $x \in \R^d$.
\end{lemma}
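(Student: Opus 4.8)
The plan is to split the integral
$$
\int_{\R^d}|u(x+h)-2u(x)+u(x-h)|\,j_\Phi(|h|)\,dh
$$
into a near-field part $|h|\le 1$ and a far-field part $|h|>1$, and to bound each using the two integrability relations in Lemma \ref{lem:propj}(3) together with the regularity assumptions on $u$.

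For the far-field part, I would simply use $|u(x+h)-2u(x)+u(x-h)|\le 4\Norm{u}{\infty}$, so that
$$
\int_{|h|>1}|u(x+h)-2u(x)+u(x-h)|\,j_\Phi(|h|)\,dh \le 4\Norm{u}{\infty}\int_{|h|>1}j_\Phi(|h|)\,dh
= 4\sigma_d\Norm{u}{\infty}\int_1^\infty r^{d-1}j_\Phi(r)\,dr<\infty,
$$
where $\sigma_d$ is the surface area of the unit sphere and the last integral is finite by Lemma \ref{lem:propj}(3). For the near-field part, the point is that the symmetric second difference cancels the first-order term: by Taylor's theorem with the integral form of the remainder (or simply by applying the mean value theorem twice), for $u\in C^2(\R^d)$ one has the pointwise bound
$$
|u(x+h)-2u(x)+u(x-h)| \le |h|^2 \sup_{|z-x|\le |h|}\Norm{D^2 u(z)}{},
$$
and since $|h|\le 1$ the supremum is taken over the compact ball $\bar B_1(x)$, on which the continuous function $z\mapsto\Norm{D^2u(z)}{}$ is bounded by some finite constant $C_x$. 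Hence
$$
\int_{|h|\le 1}|u(x+h)-2u(x)+u(x-h)|\,j_\Phi(|h|)\,dh \le C_x\int_{|h|\le 1}|h|^2 j_\Phi(|h|)\,dh
= C_x\sigma_d\int_0^1 r^{d+1}j_\Phi(r)\,dr<\infty,
$$
again finite by Lemma \ref{lem:propj}(3). Adding the two estimates gives absolute convergence for every fixed $x\in\R^d$.

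There is no real obstacle here; the only mild subtlety is getting the exponent bookkeeping right in the near-field estimate — one needs the stronger integrability $\int_0^1 r^{d+1}j_\Phi(r)\,dr<\infty$ (rather than merely $r^{d-1}$), and this is exactly what is available because the \emph{second} difference produces the extra factor $|h|^2$. It is worth noting explicitly that the constant $C_x$ depends on $x$ through the local sup of $|D^2u|$, so the bound is pointwise in $x$ and not claimed to be uniform; uniformity would require a global bound on $D^2u$, which is not assumed. This is consistent with the statement, which only asserts absolute convergence for every $x\in\R^d$.
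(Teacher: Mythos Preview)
Your proof is correct and follows essentially the same approach as the paper: split at $|h|=1$, use the $C^2$ Taylor bound $|u(x+h)-2u(x)+u(x-h)|\le C_x|h|^2$ near the origin together with $\int_0^1 r^{d+1}j_\Phi(r)\,dr<\infty$, and use $4\Norm{u}{\infty}$ plus $\int_1^\infty r^{d-1}j_\Phi(r)\,dr<\infty$ at infinity. Your write-up is in fact slightly more careful than the paper's in making explicit that $C_x$ depends on $x$ via the local supremum of $|D^2u|$.
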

\begin{proof}
Fix $x \in \R^d$ and split up the integral in \eqref{eq:Phidelta} like
\begin{align*}
\int_{\R^d}|u(x+h)-2u(x)+u(x-h)|j_{\Phi}(|h|)dh
&= \left(\int_{B_1} + \int_{\R^d \setminus B_1}\right)|u(x+h)-2u(x)+u(x-h)|j_{\Phi}(|h|)dh\\
&=I_1+I_2.
\end{align*}
To estimate $I_1$, we note that due to $u \in C^2_{\rm b}(\R^d)$ there exists $C=C(x)>0$ such that
\begin{equation*}
|u(x+h)-2u(x)+u(x-h)|\le C|h|^2, \quad h \in B_1,
\end{equation*}
so that $I_1 \le C\int_{B_1}|h|^2j_\Phi(|h|)dh < \infty$, as a consequence of Lemma \ref{lem:propj}(3).
Also, we have $I_2 \le 4\Norm{u}{\infty}\int_{\R^d \setminus B_1}j_\Phi(|h|)dh< \infty$ by the same lemma.
\end{proof}
Next we further use an equivalent formulation of $\Phi(-\Delta)$, involving Schwartz space $\cS(\R^d)$. Taking
$u \in \cS(\R^d)$, we have
\begin{equation}
\label{moreequiv}
\cF[\Phi(-\Delta)u](\xi)=\Phi(|\xi|^2)\widehat{u}(\xi).
\end{equation}
This is a known result, but for the convenience of the reader we provide a proof in an appendix (Section 5.2
below), showing the precise relations with the set-up adopted in this paper. By a simple application of Plancherel's
theorem, Lemma \ref{lem:GagliardotoFourier} and Proposition \ref{thm:density}, we have then the following consequence.
\begin{corollary}
If $u \in \cS(\R^d)$, then
$||\sqrt{\Phi(-\Delta)}u||_{2}=[u]_{\Phi}$.
Furthermore, $\sqrt{\Phi(-\Delta)}$ can be extended uniquely to a self-adjoint bounded linear operator on
$H^\Phi(\R^d)$ by setting
\begin{equation*}
\sqrt{\Phi(-\Delta)}u=\cF^{-1}[\sqrt{\Phi(|\cdot|^2)}\widehat{u}], \quad u \in H^\Phi(\R^d).
\end{equation*}
\end{corollary}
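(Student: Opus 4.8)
The plan is to argue in two stages: first establish the norm identity $\|\sqrt{\Phi(-\Delta)}u\|_2 = [u]_\Phi$ on the Schwartz space, and then extend $\sqrt{\Phi(-\Delta)}$ to all of $H^\Phi(\R^d)$ by continuity, using that $C_{\rm c}^\infty(\R^d)$ — hence $\cS(\R^d)$ — is $\|\cdot\|_\Phi$-dense in $H^\Phi(\R^d)$ by Proposition \ref{thm:density} and that the identity just obtained is precisely the isometry needed to perform the extension.

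For the first stage I would record that $\sqrt\Phi$ is again a complete Bernstein function with $a_{\sqrt\Phi} = b_{\sqrt\Phi} = 0$, i.e., $\sqrt\Phi \in \cB_0$: it is the composition of the complete Bernstein function $z \mapsto \sqrt z$ with $\Phi \in \mathcal B_{\rm C}$ (complete Bernstein functions being stable under composition), $\sqrt{\Phi}(0+) = \sqrt{a_\Phi} = 0$, and $\sqrt{\Phi(z)}/z \to 0$ as $z \to \infty$ since $b_\Phi = 0$. Thus $\sqrt{\Phi(-\Delta)}$ is a Bernstein function of the Laplacian in the sense of Section 2.2, and for $u \in \cS(\R^d)$ formula \eqref{moreequiv} applied to $\sqrt\Phi$ yields $\cF[\sqrt{\Phi(-\Delta)}u](\xi) = \sqrt{\Phi(|\xi|^2)}\,\widehat u(\xi)$. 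By Plancherel's theorem,
\[
\|\sqrt{\Phi(-\Delta)}u\|_2^2 = \int_{\R^d}\Phi(|\xi|^2)\,|\widehat u(\xi)|^2\,d\xi ,
\]
and the proof of Lemma \ref{lem:GagliardotoFourier} (via \cite[Lem. 3.1]{JS05}; alternatively, Plancherel in the $x$-variable combined with Fubini's theorem, legitimate thanks to Lemma \ref{lem:propj} and the rapid decay of $u$) identifies the right-hand side with $[u]_\Phi^2$. This gives $\|\sqrt{\Phi(-\Delta)}u\|_2 = [u]_\Phi$ for every $u \in \cS(\R^d)$.

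For the second stage, given $u \in H^\Phi(\R^d)$ I would pick $u_n \in C_{\rm c}^\infty(\R^d) \subset \cS(\R^d)$ with $\|u_n - u\|_\Phi \to 0$; then $[u_n - u_m]_\Phi \to 0$, so by the isometry of the first stage applied to $u_n - u_m$ the sequence $(\sqrt{\Phi(-\Delta)}u_n)_n$ is Cauchy in $L^2(\R^d)$, and $\sqrt{\Phi(-\Delta)}u$ is defined as its limit. Interleaving two approximating sequences shows independence of the choice, so one obtains a linear map $H^\Phi(\R^d) \to L^2(\R^d)$ which is bounded with respect to $\|\cdot\|_\Phi$ — indeed $\|\sqrt{\Phi(-\Delta)}u\|_2 = \lim_n [u_n]_\Phi = [u]_\Phi \le \|u\|_\Phi$ — and hence the unique continuous extension from the dense subspace. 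To match it with the stated formula I would use the $L^2$-continuity of $\cF$: on one hand $\cF[\sqrt{\Phi(-\Delta)}u_n] = \sqrt{\Phi(|\cdot|^2)}\widehat u_n \to \cF[\sqrt{\Phi(-\Delta)}u]$ in $L^2(\R^d)$, while on the other $\|\sqrt{\Phi(|\cdot|^2)}(\widehat u_n - \widehat u)\|_2 = [u_n - u]_\Phi \to 0$ with $\sqrt{\Phi(|\cdot|^2)}\widehat u \in L^2(\R^d)$ because $u \in H^\Phi(\R^d)$; the two limits therefore agree and $\sqrt{\Phi(-\Delta)}u = \cF^{-1}[\sqrt{\Phi(|\cdot|^2)}\widehat u]$. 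Finally, viewed as a densely defined operator in $L^2(\R^d)$ with domain $H^\Phi(\R^d)$, it is conjugated by $\cF$ to multiplication by the real-valued measurable function $\xi \mapsto \sqrt{\Phi(|\xi|^2)}$ on its maximal domain $\{f \in L^2(\R^d): \sqrt{\Phi(|\cdot|^2)}f \in L^2(\R^d)\} = \cF(H^\Phi(\R^d))$, and hence self-adjoint.

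The argument is essentially routine; the points needing a little attention are the verification that $\sqrt\Phi \in \cB_0$ (so that \eqref{moreequiv} may be invoked for $\sqrt\Phi$), the Fubini step behind $[u]_\Phi^2 = \int_{\R^d} \Phi(|\xi|^2)|\widehat u(\xi)|^2\,d\xi$ on $\cS(\R^d)$ — controlled by the integrability of $j_\Phi$ near the origin and at infinity granted by Lemma \ref{lem:propj} — and the identification of $\cF(H^\Phi(\R^d))$ with the maximal multiplication domain, which is what makes the extension genuinely self-adjoint rather than merely symmetric. I expect this last bookkeeping to be the main, though still minor, obstacle.
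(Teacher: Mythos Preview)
Your proof is correct and follows the same route the paper indicates (``a simple application of Plancherel's theorem, Lemma \ref{lem:GagliardotoFourier} and Proposition \ref{thm:density}''), filling in the details the paper omits. Your observation that $\sqrt\Phi \in \cB_0$ is needed to legitimately invoke \eqref{moreequiv} for $\sqrt\Phi$ is a point the paper glosses over, and your verification of it is sound.
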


Using the representation \eqref{moreequiv} specifically for the space $H^s(\R^d)$, $s>0$, we also show the
following for later use.
\begin{lemma}
\label{lem:product2}
Let $u \in H^s(\R^d) \cap L^1(\R^d)$ and $v \in C_{\rm c}^\infty(\R^d)$. Then $uv \in H^s(\R^d)\cap L^1(\R^d)$.
\end{lemma}
\begin{proof}
First observe that since $v \in C_{\rm c}^\infty(\R^d)$, then also $v \in L^1(\R^d)$. Furthermore, since $v \in
C_{\rm c}^\infty(\R^d)$, by the H\"older inequality we have $uv \in L^1(\R^d) \cap L^2(\R^d)$. By the convolution property
$\cF[uv]=\widehat{u} \ast \widehat{v}$ of Fourier transform we have
\begin{align*}
\int_{\R^d}|\xi|^{2s}|\cF[uv](\xi)|^2d\xi&=\int_{\R^d}|\xi|^{2s}\left|\int_{\R^d}\widehat{v}(y)\widehat{u}(\xi-y)dy\right|^2
d\xi\\
&\le \Norm{v}{\infty}^2\int_{{\rm supp}(v)}\int_{\R^d} |\xi|^{2s}|\widehat{u}(\xi-y)|^2d\xi dy \\
&=\Norm{v}{\infty}^2\int_{{\rm supp}(v)}\int_{\R^d} |\xi+y|^{2s}|\widehat{u}(\xi)|^2d\xi dy.
\end{align*}
Now let $R>0$ be such that ${\rm supp}(v)\subset B_R$. If $\xi \in \R^d \setminus B_{3R}$, then $\max_{y \in B_R}|\xi+y|^{2s}
=(|\xi|+R)^{2s} \le C|\xi|^{2s}$ for a constant $C>0$. Then we have
\begin{align*}
\int_{\R^d}|\xi|^{2s}|\cF[uv](\xi)|^2d\xi& \le \Norm{v}{\infty}^2\int_{B_R}\int_{B_{3R}} |\xi+y|^{2s}|
\widehat{u}(\xi)|^2d\xi dy \\
&\qquad
+C\Norm{v}{\infty}^2\int_{B_R}\int_{\R^d \setminus B_{3R}} |\xi|^{2s}|\widehat{u}(\xi)|^2d\xi dy\\
&\le \Norm{v}{\infty}^2\Norm{\widehat{u}(\xi)}{L^\infty(B_{3R})}^2R^{2(d+s)}4^{2(d+s)}3^{d}\omega_d^2 \\
&\qquad
+ C\Norm{v}{\infty}^2R^d\omega_d\int_{\R^d}|\xi|^{2s}|\widehat{u}(\xi)|^2d\xi<\infty.
\end{align*}
\end{proof}

\subsection{Dirichlet forms, non-local Schr\"odinger operators and ground states}
Clearly, the above description is not sufficient to guarantee the pointwise existence of $\Phi(-\Delta)u$ when
$u \in H^{\Phi}(\R^d)$, thus to handle integro-differential equations involving these operators a weak formulation
is needed. The operator $\Phi(-\Delta)$ naturally induces a Dirichlet form acting on $H^{\Phi}(\R^d) \times H^{\Phi}(\R^d)$
as
\begin{equation*}
\cE_{\Phi}(u,v)=\frac{1}{2}\int_{\R^d}\int_{\R^d}(u(y)-u(x))(v(y)-v(x))j_{\Phi}(|x-y|)dxdy.
\end{equation*}
The fact that $\cE_{\Phi}$ is well-defined on $H^{\Phi}(\R^d)\times H^{\Phi}(\R^d)$ is an easy consequence of
Young's inequality $2|ab|\le a^2+b^2$, since
\begin{equation*}
\int_{\R^d}\int_{\R^d}|u(y)-u(x)||v(y)-v(x)|j_{\Phi}(|x-y|)dxdy\le \frac{1}{2}([u]_{\Phi}^2+[v]_{\Phi}^2).
\end{equation*}
The link between the Dirichlet form $\cE_\Phi$ and the operator $\Phi(-\Delta)$ can be seen by choosing two functions
$u, v \in \cS(\R^d)$ and observing that
\begin{align*}
	\int_{\R^d}\int_{|x-y|>\varepsilon}&(u(x)-u(y))v(x)j_{\Phi}(|x-y|)dydx\\
	&=\frac{1}{2}\int_{\R^d}\int_{|x-y|>\varepsilon}(u(x)-u(y))v(x)j_{\Phi}(|x-y|)dxdy\\
	&\qquad +\frac{1}{2}\int_{\R^d}\int_{|x-y|>\varepsilon}(u(x)-u(y))v(x)j_{\Phi}(|x-y|)dxdy\\
	&=\frac{1}{2}\int_{\R^d}\int_{|x-y|>\varepsilon}(u(x)-u(y))v(x)j_{\Phi}(|x-y|)dxdy\\
	&\qquad -\frac{1}{2}\int_{\R^d}\int_{|x-y|>\varepsilon}(u(x)-u(y))v(y)j_{\Phi}(|x-y|)dxdy\\
	&=\frac{1}{2}\int_{\R^d}\int_{|x-y|>\varepsilon}(u(x)-u(y))(v(x)-v(y))j_{\Phi}(|x-y|)dxdy.
\end{align*}
Taking the limit as $\varepsilon \to 0$, by an application of the dominated convergence theorem at the right-hand side
we get
\begin{equation*}
\langle \Phi(-\Delta)u,v\rangle=\cE_\Phi(u,v).
\end{equation*}
Due to this identification, we can give the following alternative formulation of $\cE_{\Phi}$.
\begin{proposition}
\label{eq:DirichletFourier}
For every $u,v \in H^{\Phi}(\R^d)$ we have
\begin{equation*}
		\cE_\Phi(u,v)=\int_{\R^d}\Phi(|\xi|^2)\widehat{u}(\xi)\overline{\widehat{v}}(\xi)d\xi.
\end{equation*}
\end{proposition}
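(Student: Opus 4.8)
The plan is to prove the Fourier representation
\[
\cE_\Phi(u,v)=\int_{\R^d}\Phi(|\xi|^2)\,\widehat{u}(\xi)\,\overline{\widehat{v}}(\xi)\,d\xi
\]
by combining the corresponding identity already established on Schwartz functions with a density and continuity argument. First I would record that for $u,v\in\cS(\R^d)$ the claimed equality holds: indeed, the computation preceding the statement gives $\cE_\Phi(u,v)=\langle\Phi(-\Delta)u,v\rangle$, and by \eqref{moreequiv} together with Plancherel's theorem $\langle\Phi(-\Delta)u,v\rangle=\int_{\R^d}\Phi(|\xi|^2)\widehat{u}(\xi)\overline{\widehat{v}}(\xi)d\xi$. (Alternatively one can see the identity directly from Lemma \ref{lem:GagliardotoFourier} via the polarization identity applied to the quadratic form $[\cdot]_\Phi^2$, whose Fourier side is $\int\Phi(|\xi|^2)|\widehat u|^2d\xi$ by the Corollary, but routing through $\Phi(-\Delta)$ is cleanest.)

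Next I would check that both sides of the desired identity are continuous as sesquilinear forms on $H^\Phi(\R^d)\times H^\Phi(\R^d)$ in the $\Norm{\cdot}{\Phi}$-topology. For the left-hand side this is exactly the Young-inequality bound displayed just above the statement, namely $|\cE_\Phi(u,v)|\le\frac12([u]_\Phi^2+[v]_\Phi^2)\le\frac12(\Norm{u}{\Phi}^2+\Norm{v}{\Phi}^2)$, sharpened by Cauchy--Schwarz on the product measure $j_\Phi(|x-y|)dxdy$ to $|\cE_\Phi(u,v)|\le[u]_\Phi[v]_\Phi$. For the right-hand side, the Corollary identifies $\int_{\R^d}\Phi(|\xi|^2)|\widehat u(\xi)|^2d\xi=\Norm{\sqrt{\Phi(-\Delta)}u}{2}^2=[u]_\Phi^2$, so Cauchy--Schwarz in $L^2$ of the measure $\Phi(|\xi|^2)d\xi$ yields $\bigl|\int\Phi(|\xi|^2)\widehat u\overline{\widehat v}\,d\xi\bigr|\le[u]_\Phi[v]_\Phi$. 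Hence both sides are bounded sesquilinear forms that agree on the dense subspace $C_{\rm c}^\infty(\R^d)\subset\cS(\R^d)$, dense in $H^\Phi(\R^d)$ by Proposition \ref{thm:density}. A standard $3\varepsilon$-argument then extends the equality from $\cS(\R^d)\times\cS(\R^d)$ to all of $H^\Phi(\R^d)\times H^\Phi(\R^d)$: approximate $u$ by $u_n\in C_{\rm c}^\infty$ and $v$ by $v_n\in C_{\rm c}^\infty$ in $\Norm{\cdot}{\Phi}$, use bilinearity and the two continuity bounds to pass to the limit on each side.

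The only mild subtlety — and the step I expect to require the most care — is making sure the integrability on the right-hand side is genuine (not merely that the integral is a principal value), i.e. that $\Phi(|\cdot|^2)|\widehat u|^2$ is in $L^1(\R^d)$ for $u\in H^\Phi(\R^d)$; but this is immediate from the very definition of $H^\Phi(\R^d)$, since $\sqrt{\Phi(|\cdot|^2)}\widehat u\in L^2(\R^d)$. With that observed, no further difficulties arise, and the proof is just: (i) Schwartz-space identity via $\Phi(-\Delta)$ and Plancherel; (ii) joint continuity of both forms in $\Norm{\cdot}{\Phi}$; (iii) density of $C_{\rm c}^\infty(\R^d)$ from Proposition \ref{thm:density} and passage to the limit.
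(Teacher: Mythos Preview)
Your proposal is correct and follows essentially the same approach as the paper: establish the identity on Schwartz functions via $\cE_\Phi(u,v)=\langle\Phi(-\Delta)u,v\rangle$ and Plancherel, then extend to $H^\Phi(\R^d)$ by continuity of both sides and density of $C_{\rm c}^\infty(\R^d)$ (Proposition \ref{thm:density}). The only cosmetic difference is that the paper carries out the limiting argument using the Schwarz inequality for the inner product $\cE_\Phi(\cdot,\cdot)+\langle\cdot,\cdot\rangle$ on $H^\Phi(\R^d)$, whereas you invoke Cauchy--Schwarz directly on the positive semidefinite form $\cE_\Phi$ to get $|\cE_\Phi(u,v)|\le [u]_\Phi[v]_\Phi$; both routes are equivalent.
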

\begin{proof}
First consider $u,v \in \cS(\R^d)$. Then by Plancherel's theorem,
\begin{equation*}
\cE_{\Phi}(u,v)=\langle \Phi(-\Delta)u,v\rangle=\int_{\R^d}\cF[\Phi(-\Delta)u]\overline{\widehat{v}}(\xi)d\xi=
\int_{\R^d}\Phi(|\xi|^2)\widehat{u}(\xi)\overline{\widehat{v}}(\xi)d\xi.
\end{equation*}
Next let $u,v \in H^{\Phi}(\R^d)$ and take two sequences $\sequ u,\sequ v \subset C^\infty_{\rm c}(\R^d)$ such that
$u_n \to u$ and $v_n \to v$ in $H^{\Phi}(\R^d)$, which exist by Proposition \ref{thm:density}. Clearly, $\cE_{\Phi}$ is
a symmetric non-negative defined bilinear form. However, $\cE_{\Phi}(\cdot,\cdot)+\langle \cdot, \cdot \rangle$ is a
scalar product on $H^{\Phi}$ with induced norm given by $\Norm{u}{\Phi}$. In particular, by the Schwarz inequality,
\begin{align*}
	|\cE_{\Phi}(u,v)-\cE_{\Phi}(u_n,v_n)|&=|\cE_{\Phi}(u,v-v_n)+\cE_{\Phi}(u-u_n,v_n)|\\
	&\le |\cE_{\Phi}(u,v-v_n)+\langle u,v-v_n\rangle|+|\cE_{\Phi}(u-u_n,v_n)+\langle u-u_n,v_n\rangle|\\
	&\quad +|\langle u,v-v_n \rangle|+|\langle u_n,v-v_n \rangle|\\
	&\le \Norm{u}{\Phi}\Norm{v-v_n}{\Phi}+\Norm{u-u_n}{\Phi}\Norm{v_n}{\Phi}+\Norm{u}{2}\Norm{v-v_n}{2}\\
	&\quad +\Norm{u}{2}\Norm{v-v_n}{2}\\
	&\le 2(\Norm{u}{\Phi}\Norm{v-v_n}{\Phi}+\Norm{u-u_n}{\Phi}\Norm{v_n}{\Phi}).
\end{align*}
Since $v_n \to v$ in $H^{\Phi}(\R^d)$, there exists a constant $C>0$ such that for $n$ large enough we have
$\Norm{v_n}{\Phi}\le C$. Hence for sufficiently large $n$
\begin{align*}
|\cE_{\Phi}(u,v)-\cE_{\Phi}(u_n,v_n)|&\le C(\Norm{v-v_n}{\Phi}+\Norm{u-u_n}{\Phi}) \to 0, \quad n \to \infty,
\end{align*}
which implies $\lim_{n \to \infty}\cE_{\Phi}(u_n,v_n)=\cE_{\Phi}(u,v)$. Now observe that
\begin{equation}
\label{eq:EPhi}
\cE_{\Phi}(u_n,v_n)=\int_{\R^d}\Phi(|\xi|^2)\widehat{u}_n(\xi)\overline{\widehat{v}_n}(\xi)d\xi=:\langle \sqrt{\Phi(|\cdot|^2)}\widehat{u}_n,\sqrt{\Phi(|\cdot|^2)}\widehat{v}_n\rangle_{\mathbb{C}},
\end{equation}
where $\langle \cdot, \cdot \rangle_{\mathbb{C}}$ denotes the scalar product on the space of square-integrable complex
valued functions $L^2(\R^d;\mathbb{C})$. Again by Schwarz inequality
\begin{align*}
	\big|\langle &\sqrt{\Phi(|\cdot|^2)}\widehat{u}_n,\sqrt{\Phi(|\cdot|^2)}\widehat{v}_n\rangle_{\mathbb{C}}-\langle \sqrt{\Phi(|\cdot|^2)}\widehat{u},\sqrt{\Phi(|\cdot|^2)}\widehat{v}\rangle_{\mathbb{C}}\big|\\
	&\le \big|\langle \sqrt{\Phi(|\cdot|^2)}\widehat{u},\sqrt{\Phi(|\cdot|^2)}(\widehat{v}-\widehat{v}_n)\rangle_{\mathbb{C}}|
+|\langle \sqrt{\Phi(|\cdot|^2)}(\widehat{u}-\widehat{u}_n),\sqrt{\Phi(|\cdot|^2)}\widehat{v}_n\rangle_{\mathbb{C}}\big|\\
	&\le \Norm{\sqrt{\Phi(|\cdot|^2)}\widehat{u}}{\Phi}\Norm{\sqrt{\Phi(|\cdot|^2)}(\widehat{v}-\widehat{v}_n)}{\Phi}
+\Norm{\sqrt{\Phi(|\cdot|^2)}\widehat{v}_n}{\Phi}\Norm{\sqrt{\Phi(|\cdot|^2)}(\widehat{u}-\widehat{u}_n)}{\Phi}\\
	&=[u]_{\Phi}[v-v_n]_{\Phi}+[v_n]_{\Phi}[u-u_n]_{\Phi}\\
	&\le C([v-v_n]_{\Phi}+[u-u_n]_{\Phi}) \to 0 \quad \mbox{as $n\to\infty$},
\end{align*}
which implies
\begin{equation*}
\lim_{n \to \infty}\langle \sqrt{\Phi(|\cdot|^2)}\widehat{u}_n,\sqrt{\Phi(|\cdot|^2)}\widehat{v}_n\rangle_{\mathbb{C}}
=\langle \sqrt{\Phi(|\cdot|^2)}\widehat{u},\sqrt{\Phi(|\cdot|^2)}\widehat{v}\rangle_{\mathbb{C}}.
\end{equation*}
Taking the limit on both sides of \eqref{eq:EPhi} we get the desired result.
\end{proof}

Consider the heat kernel $p^\Phi_t(x)$ of $\Phi(-\Delta)$, defined via inverse Fourier transform as
\begin{equation*}
p_t^\Phi(x)=\frac{1}{(2\pi)^d}\int_{\R^d}e^{-ix\cdot \xi}e^{-t \Phi(|\xi|^2)}d\xi, \quad x \in \R^d,
\end{equation*}
and the $1$-resolvent kernel
\begin{equation*}
G_1^\Phi(x)=\int_0^{\infty}e^{-t}p_t^\Phi(x)dt, \quad x \in \R^d.
\end{equation*}
For $\Phi \in \cB_0$, we know (see, for instance, \cite[Sect. 3.1]{KSV12}) that there exists a probability measure
$g_\Phi(t,\cdot)$ on $(0,\infty)$ such that
\begin{equation*}
p^\Phi_t(x)=\frac{1}{(4\pi)^{d/2}}\int_{0}^{\infty}s^{-\frac{d}{2}}e^{-\frac{|x|^2}{4s}}g_\Phi(t,ds).
\end{equation*}
In particular, it is clear that $p^\Phi_t$ is radially non-increasing for all $t>0$ and $\Norm{p^\Phi_t}{1}=1$.
We will use a suitable class of functions that we will use as potentials, see \cite[Sect. 4.1]{HIL12},
\cite[Def. 4.280]{LHB}.
\begin{definition}
We say that a non-negative function $V$ belongs to \emph{$\Phi$-Kato class} $\cK^{\Phi}(\R^d)$ whenever
\begin{equation*}
\lim_{\delta \downarrow 0}\sup_{x \in \R^d}\int_{|x-y|<\delta} G_1^\Phi(x-y)V(y)dy=0.
\end{equation*}
Also, we say that a function $V:\R^d \to \bar{\R}$ is \emph{$\Phi$-Kato decomposable} if $V^- \in \cK^{\Phi}(\R^d)$
and $\mathbf{1}_{K}V^+ \in \cK^{\Phi}(\R^d)$ for every compact set $K \subset \R^d$, where $V^+(x)=\max\{0,V(x)\}$
and $V^-(x)=\max\{0,-V(x)\}$, $x \in \R^d$. We denote the set of $\Phi$-Kato decomposable functions by
$\cK_{\rm dec}^\Phi(\R^d)$.
\end{definition}
\noindent
Clearly, if $V$ is such that $V^- \in L^\infty(\R^d)$ and $V^+\in L^\infty_{\rm loc}(\R^d)$, then $V \in
\cK_{\rm dec}^\Phi(\R^d)$.

For any potential $V \in \cK_{\rm dec}^\Phi(\R^d)$ we define the non-local Schr\"odinger operator $\cH_{\Phi,V}$ by
\begin{equation*}
\cH_{\Phi,V}=\Phi(-\Delta)+V,
\end{equation*}
whose core is given by $C_{\rm c}^\infty(\R^d)$. Such an operator has a self-adjoint realization, see \cite[Th. 4.8]{HIL12}
and \cite[Th. 4.285]{LHB}).

We introduce the symmetric bilinear form
\begin{equation*}
	\cA_{\Phi,V}(u,v)=\cE_{\Phi}(u,v)+\langle Vu,v\rangle,
\end{equation*}
with core $C^\infty_{\rm c}(\R^d) \times C^\infty_{\rm c}(\R)$. Clearly, it is well-defined on $\cD(\cA_{\Phi,V}) \times
\cD(\cA_{\Phi,V})$, where
\begin{equation*}
\cD(\cA_{\Phi,V})=\left\{u \in H^{\Phi}(\R^d): \ \left|\int_{\R^d}V(x)u^2(x)dx\right|< \infty\right\}.
\end{equation*}
Weak solutions of the Schr\"odinger equation $\cH_\Phi u=f$ are then defined as functions $u \in H^{\Phi}(\R^d)$ such that
\begin{equation*}
\cA_{\Phi,V}(u,v)=\langle f,v\rangle, \quad v \in C^\infty_{\rm c}(\R^d),
\end{equation*}
holds. Consider the respectively associated quadratic forms, extended to $L^2(\R^d)$, defined by
\begin{equation*}
\cE_{\Phi}[u]=
\begin{cases} \cE_{\Phi}(u,u) & \mbox{if $u \in H^\Phi(\R^d)$} \\
\infty & \mbox{if $u \not \in H^\Phi(\R^d)$}
\end{cases}
\quad \mbox{and} \quad
\cA_{\Phi,V}[u]=\begin{cases} \cA_{\Phi,V}(u,u) & \mbox{if $u \in \cD(\cA_{\Phi,V})$} \\
		\infty & \mbox{if $u \not \in \cD(\cA_{\Phi,V})$}.
	\end{cases}
\end{equation*}
Among potentials $V \in \cK_{\rm dec}^\Phi(\R^d)$, there are two large subclasses of special interest:
\begin{itemize}
\item
\emph{confining potentials}:  $V \in L^1_{\rm loc}(\R^d)$ such that $V^- \in L^\infty(\R^d)$ and $\lim_{|x| \to
\infty}V(x)=\infty$; in this case, by a simple application of Rellich's criterion \cite[Th. 4.71]{LHB}
it follows that ${\rm Spec}(\cH_{\Phi,V})={\rm Spec}_{\rm d}(\cH_{\Phi,V})$
\item
\emph{decaying potentials}: $V \in L^{\infty,0}(\R^d) := \{h\in L^\infty(\R^d): \lim_{|x| \to \infty}h(x)=0\}$;
in this case $\cD(\cA_{\Phi,V})=H^{\Phi}(\R^d)$, ${\rm Spec}_{\rm ess}(\cH_{\Phi,V})=[0,\infty)$ and
${\rm Spec}_{\rm d}(\cH_{\Phi,V}) \not = \emptyset$ as soon as there exists $u \in H^{\Phi}(\R^d)$ such
that $\cA_{\Phi,V}[u]<0$.
\end{itemize}
Whenever ${\rm Spec}_{\rm d}(\cH_{\Phi,V}) \not = \emptyset$, the lowest lying eigenvalue $\lambda_0=
\min{\rm Spec}_{\rm d}(\cH_{\Phi,V})$, called ground state eigenvalue (or ground state energy) plays a special
role. Since the semigroup $\{e^{-t\cH_{\Phi,V}}: t \geq 0\}$ is positivity improving,
by the Perron-Frobenius theorem \cite[Th. 4.123]{LHB} the ground state eigenvalue $\lambda_0$ is simple and the
uniquely corresponding eigenfunction $\varphi_0$ has a strictly positive version, which we will use throughout.
Also, we choose $\Norm{\varphi_0}{2}=1$ by usual convention. Furthermore, as a direct consequence of the fact that
$e^{-t\cH_{\Phi,V}}: L^p(\R^d) \to L^q(\R^d)$, $t > 0$, is continuous for every $1 \le p,q \le \infty$, we know that
$\varphi_0 \in L^\infty(\R^d)$. Also, since $p_t^\Phi \in L^1(\R^d)$, it follows that $\varphi_0 \in C(\R^d)$. For
details we refer to \cite[Prop. 4.291]{LHB}.

Dirichlet forms and related operators can also be defined for cases when $\R^d$ is replaced by a sufficiently regular
bounded open set $\Omega \subset \R^d$. The non-local Dirichlet Laplacian $\Phi(-\Delta)_\Omega$ is then acting on
functions $u \in H^{\Phi}(\R^d)$ such that $u(x)=0$ for every $x \in \R^d \setminus \Omega$. Formally, the non-local
Dirichlet Laplacian acts on the closure $H_0^\Phi(\Omega)$ of $C_{\rm c}^\infty(\Omega)$ into $H^\Phi(\R^d)$. The
corresponding Dirichlet form $\cE_{\Phi,\Omega}$ can be extended to the whole space $L^2(\R)$ by writing
\begin{equation*}
\cE_{\Phi,\Omega}[u]=
\begin{cases}
\cE_{\Phi,\Omega}(u,u) & u \in H^\Phi_0(\Omega) \\
\infty & u \not \in H^\Phi_0(\Omega).
\end{cases}
\end{equation*}
We can define the eigenvalues of the non-local Dirichlet Laplacian as the values $\lambda$ such that
\begin{equation*}
\begin{cases}
\Phi(-\Delta)u(x)=\lambda u(x) & x \in \Omega\\
u=0 & x \not \in \Omega \quad \mbox{or equivalently} \quad \Phi(-\Delta)_\Omega u = \lambda u, \; u \in H^\Phi_0(\Omega).
\end{cases}
\end{equation*}

For our purposes below, we will need $\Omega=B_r$, $r>0$, and the corresponding principal Dirichlet eigenvalue $\lambda_r>0$.
Furthermore, denote by $f_r \in L^2(\R^d)$ the corresponding eigenfunction, such that $f_r \ge 0$ and $\Norm{f_r}{2}=1$, and
by $\cE_{\Phi,B_r}(f_r,f_r)$ the associated Dirichlet form. It is known \cite[Cor. 2.3]{AL89} that $f_r$ is a radial,
non-increasing function, and clearly ${\rm supp}(f_r)=B_r$.

We can use the Dirichlet eigenfunction to find a sufficient condition for the existence of a ground state of $\cH_{\Phi,V}$
as an operator on $L^2(\R^d)$.
\begin{proposition}
\label{eq:prop}
Let $V \in L^{\infty,0}(\R^d)$ and suppose that there exists $r>0$ such that $V(x) \le v < 0$ in $B_r$ and $\lambda_r-v<0$.
Then $\cH_{\Phi,V}$ has a ground state.
\end{proposition}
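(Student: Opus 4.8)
The plan is to produce a trial function $u \in \cD(\cA_{\Phi,V}) = H^\Phi(\R^d)$ (recall that for decaying $V$ the form domain is all of $H^\Phi(\R^d)$) for which $\cA_{\Phi,V}[u] < 0$; by the stated characterization of the discrete spectrum for decaying potentials this immediately yields $\Spec_{\rm d}(\cH_{\Phi,V}) \neq \emptyset$, hence the existence of a ground state. The natural candidate is the principal Dirichlet eigenfunction $f_r$ of $\Phi(-\Delta)_{B_r}$, extended by zero to all of $\R^d$. Since $f_r \in H^\Phi_0(B_r) \subset H^\Phi(\R^d)$ and $\Norm{f_r}{2} = 1$, this is an admissible trial function.

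First I would compute $\cE_\Phi[f_r]$. The key point is that, because $f_r$ vanishes outside $B_r$ and is the principal Dirichlet eigenfunction, $\cE_\Phi(f_r,f_r) = \cE_{\Phi,B_r}(f_r,f_r) = \lambda_r \Norm{f_r}{2}^2 = \lambda_r$. (One should note here that the full-space Gagliardo-type form $\cE_\Phi$ restricted to functions supported in $B_r$ coincides with $\cE_{\Phi,B_r}$ — this is exactly the definition of the non-local Dirichlet Laplacian given above, since both are the same double integral over $\R^d \times \R^d$ of $(f_r(y)-f_r(x))(f_r(y)-f_r(x)) j_\Phi(|x-y|)$.) Next, for the potential term, since $\supp(f_r) = B_r$ and $V(x) \le v$ on $B_r$,
\begin{equation*}
\langle V f_r, f_r \rangle = \int_{B_r} V(x) f_r^2(x)\, dx \le v \int_{B_r} f_r^2(x)\, dx = v.
\end{equation*}
Here one uses $v < 0$ together with $f_r^2 \ge 0$ and $\Norm{f_r}{2}=1$. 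Combining the two estimates gives
\begin{equation*}
\cA_{\Phi,V}[f_r] = \cE_\Phi[f_r] + \langle V f_r, f_r\rangle \le \lambda_r + v = \lambda_r - v'
\end{equation*}
— more precisely $\cA_{\Phi,V}[f_r] \le \lambda_r + v$, and by hypothesis $\lambda_r - v < 0$ where $v<0$, i.e. $\lambda_r + v < 0$ when we read $v$ as the (negative) upper bound; care with signs is the only bookkeeping needed, matching the convention $V(x) \le v < 0$ so that the relevant quantity is $\lambda_r + v = \lambda_r - |v| < 0$.

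Finally I would invoke the variational characterization: since $\cA_{\Phi,V}[f_r] < 0 = \inf \Spec_{\rm ess}(\cH_{\Phi,V})$ (recall $\Spec_{\rm ess}(\cH_{\Phi,V}) = [0,\infty)$ for decaying $V$), the infimum of the form over the unit sphere in $L^2$ is strictly negative and strictly below the essential spectrum, so it is attained at an eigenvalue $\lambda_0 = \min \Spec_{\rm d}(\cH_{\Phi,V}) < 0$; by the Perron-Frobenius argument recalled above this eigenvalue is simple with strictly positive eigenfunction, i.e. $\cH_{\Phi,V}$ has a ground state. The main (very minor) obstacle is purely notational: keeping the sign convention on $v$ straight, and confirming that the restriction of $\cE_\Phi$ to zero-extended functions on $B_r$ really is $\cE_{\Phi,B_r}$, so that $f_r$ contributes exactly $\lambda_r$ to the kinetic energy rather than merely an upper bound. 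Nothing deep is required beyond this.
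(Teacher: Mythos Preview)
Your approach is essentially identical to the paper's: use the principal Dirichlet eigenfunction $f_r$ as a trial function, note $\cE_\Phi(f_r,f_r)=\lambda_r$, and bound the potential term using the support of $f_r$. The sign confusion you flag is genuine: the paper's proof writes $\cA_{\Phi,V}(f_r,f_r)\le \lambda_r - v$, which is consistent with the convention $v>0$ and $V\le -v$ (as in the later application $V=-v\mathbf 1_{B_a}$), rather than with ``$V(x)\le v<0$'' read literally; your computation $\cA_{\Phi,V}[f_r]\le \lambda_r + v$ is the correct one under the stated convention, and the intended hypothesis is indeed $\lambda_r - |v| < 0$.
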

\begin{proof}
Note that $\cE_\Phi(f_r,f_r)=\cE_{\Phi,B_r}(f_r,f_r)=\lambda_r$, as $f_r \in H^{\Phi}(\R^d)$. Hence we have
\begin{equation*}
\cA_{\Phi,V}(f_r,f_r)=\lambda_r+\int_{B_r}V(x)f_r^2(x)dx \le \lambda_r-v<0.
\end{equation*}
\end{proof}
In the remainder of this paper we will work under a second standing assumption as follows.
\begin{assumption}
\label{ass:existence}
There exists a unique, strictly positive ground state of $\cH_{\Phi,V}$.
\end{assumption}

We also note an alternative (variational) characterization of the ground state eigenvalue. If $V \in \cK_{\rm dec}^\Phi(\R^d)$,
the operator $\cH_{\Phi,V}$ is self-adjoint on its domain, hence we can use Courant's min-max theorem to write
\begin{equation}
\label{eq:minmax}
\lambda_0=\min_{\substack{u \in H^{\Phi}(\R^d) \\ \Norm{u}{2}=1}}\cA_{\Phi,V}(u,u).
\end{equation}

\subsection{Modes of convergence}
Since the following notions of convergence will play a key role in this paper, we briefly recall some definitions.
\begin{definition}
\label{def:Gammaconv}
Let $X$ be a metric space, $\sequ F$ a sequence of functionals $F_n: X \to \bar{\R}$, $n \in \mathbb N$, and
$F:X \to \bar{\R}$ another functional. The sequence $\sequ F$ is said to be \emph{$\Gamma$-convergent} to $F$,
denoted $F=\Gamma-\lim_{n \to \infty}F_n$, whenever
\begin{itemize}
\item[(1)]
for every sequence $\sequ x \subset X$ it follows that $F(x) \le \liminf_{n \to \infty}F_n(x_n)$;
\item[(2)]
for every $x \in X$ there exists a sequence $\sequ x \subset X$ convergent to $x$ such that $F(x) \ge
\limsup_{n \to \infty}F_n(x_n)$.
\end{itemize}
If $X=L^2(\R^d)$, we denote $F={\rm s}\Gamma-\lim_{n \to \infty}F_n$ to say that $\sequ F$ is $\Gamma$-convergent to
$F$ in the strong topology of $L^2(\R^d)$, while we denote $F={\rm w}\Gamma-\lim_{n \to \infty}F_n$ if $\sequ  F$ is
$\Gamma$-convergent to $F$ in the weak topology of $L^2(\R^d)$.
\end{definition}

We also recall the following concept.
\begin{definition}
Let $X$ be a metric space. A functional $F:X \to \bar{\R}$ is called \emph{coercive} if for every $M>0$ there exists a
compact set $K_M$ such that if $x \not \in K_M$, then $F(x)>M$ (or equivalently, if there exists a compact set $K_M$
such that if $F(x) \le M$, then $x \in K_M$). A sequence of functionals $\sequ F$, $F_n: X \to \bar{\R}$, is called
\emph{equicoercive} if for every $M>0$ there exists a compact set $K_M$ (independent of $n$) such that, for all
$n \in \N$, the relation $F_n(x) \le M$ implies $x \in K_M$.
\end{definition}

We will also make use of the following form of convergence of self-adjoint operators (see, for instance, \cite[Sect. 6.6]{T09}).
Let $X$ be a Hilbert space and $A_n:\cD(A_n) \subset X \to X$ be a sequence of closed self-adjoint operators with dense core
$\cD(A_n)$. Assume $A: \cD(A)\subset X \to X$ is a closed self-adjoint operator with dense core $\cD(A) \subseteq \cD(A_n)$ for
every $n \in \N$. Let $\rho(A_n), \rho(A)$ be the resolvents respectively of $A_n$ and $A$, and consider $R_{A_n}:\lambda \in
\rho(A_n) \to (\lambda-A)^{-1} \in \cL(X)$, where $\cL(X)$ denotes the space of bounded linear operators of $X$ on itself.
\begin{definition}
Let $\sequ A$ be a sequence of self-adjoint operators.
\begin{enumerate}
\item
Convergence $A_n \to A$ holds in \emph{strong resolvent sense} if $\rho(A)\cap \left(\cap_{n \in \N} \rho(A_n)\right) \not =
\emptyset$ and there exists $\lambda \in \rho(A)\cap \left(\cap_{n \in \N} \rho(A_n)\right)$ such that $R_{A_n}(\lambda)u \to R_{A}
(\lambda)u$ in the strong topology of $X$, for every $u \in \cD(A)\subseteq \cD(A_n)$. We denote it as $A={\rm SR}-\lim_{n \to
\infty}A_n$.

\item
Convergence $A_n \to A$ holds in \emph{norm resolvent sense} if $\rho(A)\cap \left(\cap_{n \in \N} \rho(A_n)\right) \not = \emptyset$
and there exists $\lambda \in \rho(A)\cap \left(\cap_{n \in \N} \rho(A_n)\right)$ such that $R_{A_n}(\lambda) \to R_{A}(\lambda)$
in the strong topology of $\cL(X)$. We denote it as $A={\rm NR}-\lim_{n \to \infty}A_n$.
\end{enumerate}
\end{definition}
Recall that
if $A={\rm SR}-\lim_{n \to \infty}A_n$, then $R_{A_n}(\lambda)u \to R_A(\lambda)u$ in the strong topology of $X$, for all
$u \in \cD(A_n)\subseteq \cD(A)$ and every $\lambda \in \rho(A)\cap \left(\cap_{n \in \N} \rho(A_n)\right)$. Also, recall
that if $A={\rm NR}-\lim_{n \to \infty}A_n$, then $R_{A_n}(\lambda) \to R_A(\lambda)$ in the strong topology of $\cL(X)$ for
every $\lambda \in \rho(A)\cap \left(\cap_{n \in \N} \rho(A_n)\right)$. For a proof see \cite[Cor. 6.32]{T09}.

The relationship of interest in our context between these modes of convergence is summarized by the following; for a proof we refer
to \cite[Th. 13.6]{DM12}.
\begin{proposition}
\label{prop:GammatoSR}
Let $X$ be a Hilbert space with norm $\Norm{\cdot}{X}$ induced by its scalar product.  Also, let $\sequ {\mathcal A}$ be a sequence
of positive quadratic forms $\mathcal A_n: L^2(\R^d) \to \bar{\R}$, $n \in \mathbb N$, and $\mathcal A: L^2(\R^d) \to \bar{\R}$
another positive quadratic form. Furthermore let $\sequ A$ be the sequence of positive self-adjoint linear operators on $L^2(\R^d)$
correspondingly defined by the quadratic forms $\sequ {\mathcal A}$, and $A$ be the positive self-adjoint linear operator on
$L^2(\R^d)$ defined by the quadratic form $\mathcal A$. Then the following two properties are equivalent:
\begin{enumerate}
\item[(1)]
There exists a constant $C_{\cA}>0$ such that
\begin{equation*}
{\rm w}\Gamma-\lim_{n \to \infty}(\cA_n+C_{\cA}\Norm{\cdot}{X}^2)=
{\rm s}\Gamma-\lim_{n \to \infty}(\cA_n+C_{\cA}\Norm{\cdot}{X}^2)=(\cA+C_{\cA}\Norm{\cdot}{X}^2).
\end{equation*}
\item[(2)]
$\sequ A$ converges to $A$ in strong resolvent sense.
\end{enumerate}
 \end{proposition}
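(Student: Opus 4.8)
The plan is to translate condition (1) into a convergence statement about the \emph{inverses} of the operators attached to the shifted forms, and then verify that equivalence by a variational argument in one direction and by spectral/duality tools in the other. Put $\cB_n := \cA_n + C_\cA\Norm{\cdot}{X}^2$ and $\cB := \cA + C_\cA\Norm{\cdot}{X}^2$, and let $B_n = A_n + C_\cA I$, $B = A + C_\cA I$ be the self-adjoint operators these closed forms define. Since $\cA_n,\cA\ge 0$ we have $B_n, B \ge C_\cA I$, so $\lambda = -C_\cA$ belongs to $\rho(A)\cap\bigcap_n\rho(A_n)$, the operators $B_n^{-1}, B^{-1}$ are everywhere defined with norm at most $1/C_\cA$, and — by the remark recorded after the definition of resolvent convergence — property (2) is equivalent to $B_n^{-1}u \to B^{-1}u$ in $X$ for every $u \in X$. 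So it suffices to show that condition (1) holds if and only if $B_n^{-1}\to B^{-1}$ strongly.

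\textbf{From (1) to (2).} Fix $f \in X$ and set $u_n = B_n^{-1}f$, $u = B^{-1}f$; these are the unique minimizers of the strictly convex, coercive, lower semicontinuous functionals $F_n(v) = \frac12\cB_n[v] - \langle f,v\rangle$ and $F(v) = \frac12\cB[v] - \langle f,v\rangle$ on $X$ (coercivity and strict convexity come from the term $C_\cA\Norm{\cdot}{X}^2$). Since $\Norm{u_n}{X}\le\Norm{f}{X}/C_\cA$, the sequence is bounded, so along a subsequence $u_n\rightharpoonup w$. The weak-$\Gamma$ liminf part of (1) together with $\langle f,u_n\rangle\to\langle f,w\rangle$ gives $F(w)\le\liminf_n F_n(u_n)$. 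For arbitrary $v\in X$, the strong-$\Gamma$ limsup part of (1) produces $v_n\to v$ strongly with $\cB_n[v_n]\to\cB[v]$ (the matching liminf upgrades the $\limsup$ to a genuine limit), hence $F_n(v_n)\to F(v)$; minimality of $u_n$ gives $F_n(u_n)\le F_n(v_n)$, so $\liminf_n F_n(u_n)\le F(v)$. Combining, $F(w)\le F(v)$ for every $v$, so $w=u$ by uniqueness of the minimizer, and since the limit is independent of the subsequence, $u_n\rightharpoonup u$ for the whole sequence. To promote this to strong convergence, note $\cB_n[u_n]=\langle B_nu_n,u_n\rangle=\langle f,u_n\rangle\to\langle f,u\rangle=\cB[u]$, i.e. $\cA_n[u_n]+C_\cA\Norm{u_n}{X}^2\to\cA[u]+C_\cA\Norm{u}{X}^2$. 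Pick a strong recovery sequence $w_n\to u$ with $\cB_n[w_n]\to\cB[u]$, hence $\cA_n[w_n]\to\cA[u]$ since $\Norm{w_n}{X}\to\Norm{u}{X}$; using $\langle B_nu_n,w_n\rangle=\langle f,w_n\rangle\to\langle f,u\rangle$ and $\langle u_n,w_n\rangle\to\Norm{u}{X}^2$ we get $\cA_n(u_n,w_n)\to\cA[u]$, and expanding the nonnegative quantity $\cA_n[u_n-w_n]\ge 0$ then yields $\liminf_n\cA_n[u_n]\ge\cA[u]$. Feeding this back into the displayed limit forces $\limsup_n C_\cA\Norm{u_n}{X}^2\le C_\cA\Norm{u}{X}^2$, which together with weak lower semicontinuity gives $\Norm{u_n}{X}\to\Norm{u}{X}$, whence $u_n\to u$ in $X$. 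Thus $B_n^{-1}\to B^{-1}$ strongly.

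\textbf{From (2) to (1).} Now $B_n^{-1}\to B^{-1}$ strongly, and by strong resolvent convergence of $A_n$ to $A$ applied to the bounded continuous function $s\mapsto(s+C_\cA)^{-1/2}$ one also gets $B_n^{-1/2}\to B^{-1/2}$ strongly. For the recovery (limsup) part of $\Gamma$-convergence: if $\cB[v]<\infty$, write $v=B^{-1/2}h$ and set $v_n=B_n^{-1/2}h$; then $v_n\to v$ strongly and $\cB_n[v_n]=\Norm{h}{X}^2=\cB[v]$ exactly for every $n$, while if $\cB[v]=\infty$ take $v_n=v$ — this settles the limsup inequality in the strong topology and, a fortiori, in the weak one. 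For the liminf part: use the duality identity $\langle Tv,v\rangle=\sup_{h\in X}\big(2\,\Re\langle h,v\rangle-\langle T^{-1}h,h\rangle\big)$, valid for a positive self-adjoint $T$ with bounded inverse (the supremum being $+\infty$ when $v\notin\Dom(T^{1/2})$). For any $v_n\rightharpoonup v$ and fixed $h$, $\cB_n[v_n]\ge 2\,\Re\langle h,v_n\rangle-\langle B_n^{-1}h,h\rangle\to 2\,\Re\langle h,v\rangle-\langle B^{-1}h,h\rangle$; taking $\liminf$ in $n$ and then the supremum over $h$ gives $\liminf_n\cB_n[v_n]\ge\cB[v]$. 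Since strong convergence implies weak convergence, this is simultaneously the strong-$\Gamma$ and weak-$\Gamma$ liminf inequality, so condition (1) holds.

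The step I expect to be the main obstacle is the passage from weak to strong convergence of $B_n^{-1}f$ in the first direction: extracting $\liminf_n\cA_n[u_n]\ge\cA[u]$ from the mere nonnegativity of the forms $\cA_n$ requires comparing $u_n$ with a strong recovery sequence for \emph{its own} weak limit, and only then does the convergence of the full shifted forms along $(u_n)$ close the gap and yield norm convergence. Everything else is a standard direct-method argument with minimizers of the shifted forms in one direction, and routine functional calculus together with the Legendre-type duality for closed positive quadratic forms in the other.
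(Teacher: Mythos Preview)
Your argument is correct. The paper, however, does not give its own proof of this proposition: it is quoted as a known result with the sentence ``for a proof we refer to \cite[Th. 13.6]{DM12}''. So there is no in-paper argument to compare against; what you have written is precisely a self-contained version of the standard proof behind that citation --- the direct method on the shifted functionals $F_n(v)=\tfrac12\cB_n[v]-\langle f,v\rangle$ to identify the weak limit of $B_n^{-1}f$ and then upgrade to norm convergence, and in the converse direction the functional-calculus construction $v_n=B_n^{-1/2}h$ of recovery sequences together with the Legendre-type duality $\langle Tv,v\rangle=\sup_h\big(2\,\Re\langle h,v\rangle-\langle T^{-1}h,h\rangle\big)$ for the liminf inequality. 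Both directions are handled cleanly; in particular, your trick of comparing $u_n$ with a strong recovery sequence $w_n$ for its own limit $u$ and expanding $\cA_n[u_n-w_n]\ge 0$ is exactly the right way to close the gap between weak and strong convergence of $B_n^{-1}f$.
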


\section{Continuity of ground state eigenvalues}
In the following we prove the stability of the ground state eigenvalue with respect to suitable variations of the
potential. We will make use of the connection between $\Gamma$-convergence and strong resolvent convergence as given
in Proposition \ref{prop:GammatoSR}. We have the following result.
\begin{theorem}
\label{thm:stability}
Let $\Phi \in \cB_0$ satisfy Assumption \ref{assPGc} for some $s \in (0,1)$. Also, let $\seq V\subset \cK_{\rm dec}^{\Phi}
(\R^d)$ and $V \in \cK_{\rm dec}^{\Phi}(\R^d)$ be such that
\begin{enumerate}
\item
$\max\{\sup_{k \in \mathbb N}\Norm{V^-_k}{\infty},\Norm{V^-}{\infty}\}=:C_V < \infty$;
\item
$V_k(x) \to V(x)$ as $k\to\infty$ holds for almost every $x \in \R^d$;
\item
for every $u \in \cD(\cA_{\Phi,V})$ we have $\limsup_{k \to \infty}\int_{\R^d}V_k(x)u^2(x)dx  \le \int_{\R^d}V(x)u^2(x)dx$.
\end{enumerate}
Then $\cH_{\Phi,V}={\rm SR}-\lim_{k \to \infty}\cH_{\Phi,V_k}$.
\end{theorem}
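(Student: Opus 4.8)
The plan is to derive strong resolvent convergence from a $\Gamma$-convergence statement for the associated quadratic forms, via Proposition~\ref{prop:GammatoSR}. Set $X=L^2(\R^d)$ with its norm $\Norm{\cdot}{2}$, fix the constant $c:=C_V+1>0$, and introduce the positive quadratic forms $\mathcal F_k:=\cA_{\Phi,V_k}[\cdot]+c\Norm{\cdot}{2}^2$ and $\mathcal F:=\cA_{\Phi,V}[\cdot]+c\Norm{\cdot}{2}^2$ on $L^2(\R^d)$. By condition~(1) one has $V_k\ge -C_V$ and $V\ge -C_V$, so whenever $\mathcal F_k[u]$ (resp.\ $\mathcal F[u]$) is finite the function $u$ lies in $\cD(\cA_{\Phi,V_k})\subseteq H^{\Phi}(\R^d)$ (resp.\ in $\cD(\cA_{\Phi,V})$) and $\mathcal F_k[u]=\cE_{\Phi}[u]+\int_{\R^d}(V_k+c)u^2\,dx\ge\Norm{u}{2}^2$ (resp.\ $\mathcal F[u]=\cE_{\Phi}[u]+\int_{\R^d}(V+c)u^2\,dx\ge\Norm{u}{2}^2$), a sum of nonnegative terms. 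The self-adjoint operators induced by $\mathcal F_k$ and $\mathcal F$ are $\cH_{\Phi,V_k}+c$ and $\cH_{\Phi,V}+c$, and since strong resolvent convergence is insensitive to the shift by $c$, Proposition~\ref{prop:GammatoSR} reduces the claim to showing that $\mathcal F={\rm w}\Gamma\text{-}\lim_{k\to\infty}\mathcal F_k={\rm s}\Gamma\text{-}\lim_{k\to\infty}\mathcal F_k$ (this also covers the extra term $C_{\cA}\Norm{\cdot}{2}^2$ in Proposition~\ref{prop:GammatoSR}(1), since the recovery sequences below converge strongly and $\Norm{\cdot}{2}^2$ is weakly lower semicontinuous and strongly continuous). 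As a recovery sequence that converges strongly in $L^2$ also converges weakly, and as the $\liminf$ inequality over weakly convergent sequences implies the corresponding inequality over strongly convergent ones, it is enough to establish: \textbf{(i)} $\mathcal F[u]\le\liminf_{k\to\infty}\mathcal F_k[u_k]$ whenever $u_k\rightharpoonup u$ in $L^2(\R^d)$; and \textbf{(ii)} for every $u\in L^2(\R^d)$ there is a sequence $u_k\to u$ strongly in $L^2(\R^d)$ with $\limsup_{k\to\infty}\mathcal F_k[u_k]\le\mathcal F[u]$.

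For~(i), let $u_k\rightharpoonup u$ in $L^2(\R^d)$ and put $L:=\liminf_{k\to\infty}\mathcal F_k[u_k]$. I may assume $L<\infty$ and, passing to a subsequence along which $\mathcal F_k[u_k]\to L$, that $\sup_k\mathcal F_k[u_k]<\infty$. Then each $u_k\in\cD(\cA_{\Phi,V_k})$ and, since $V_k+c\ge 1$, the identity above yields $\cE_{\Phi}[u_k]=[u_k]_{\Phi}^2\le\mathcal F_k[u_k]$; combined with the uniform bound $\sup_k\Norm{u_k}{2}<\infty$ (weakly convergent sequences are bounded) this gives $\sup_k\Norm{u_k}{\Phi}<\infty$. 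Proposition~\ref{prop:compactemb} then produces a further (non-relabelled) subsequence along which $u_k\to u$ almost everywhere on $\R^d$ — the weak $L^2$-limit being unique, it must coincide with $u$. Applying Fatou's lemma to the Gagliardo-type representation of Lemma~\ref{lem:GagliardotoFourier},
\begin{equation*}
\cE_{\Phi}[u]=\frac12\int_{\R^d}\int_{\R^d}|u(x+h)-u(x)|^2 j_{\Phi}(|h|)\,dx\,dh\le\liminf_{k\to\infty}\cE_{\Phi}[u_k],
\end{equation*}
while, since $V_k\to V$ a.e.\ by condition~(2) and $u_k\to u$ a.e., the nonnegative integrands $(V_k+c)u_k^2$ converge a.e.\ to $(V+c)u^2$, so Fatou's lemma also gives $\int_{\R^d}(V+c)u^2\,dx\le\liminf_{k\to\infty}\int_{\R^d}(V_k+c)u_k^2\,dx$. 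Adding these and using superadditivity of $\liminf$,
\begin{equation*}
\mathcal F[u]\le\liminf_{k\to\infty}\Big(\cE_{\Phi}[u_k]+\int_{\R^d}(V_k+c)u_k^2\,dx\Big)=\liminf_{k\to\infty}\mathcal F_k[u_k]=L;
\end{equation*}
in particular $u\in\cD(\cA_{\Phi,V})$, so $\mathcal F[u]$ is the genuine value of the form. This proves~(i).

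For~(ii), if $u\notin\cD(\cA_{\Phi,V})$ then $\mathcal F[u]=\infty$ and the constant sequence $u_k\equiv u$ trivially works; so assume $u\in\cD(\cA_{\Phi,V})$ and again take $u_k\equiv u$. Since $\int_{\R^d}V_k^-u^2\,dx\le C_V\Norm{u}{2}^2<\infty$, condition~(3) forces $\int_{\R^d}V_k^+u^2\,dx<\infty$ for all large $k$ (otherwise $\int_{\R^d}V_ku^2\,dx=+\infty$ along a subsequence, contradicting $\limsup_{k\to\infty}\int_{\R^d}V_ku^2\,dx\le\int_{\R^d}Vu^2\,dx<\infty$), hence $u\in\cD(\cA_{\Phi,V_k})$ eventually and $\mathcal F_k[u]=\cE_{\Phi}[u]+c\Norm{u}{2}^2+\int_{\R^d}V_ku^2\,dx$. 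Taking $\limsup$ and invoking condition~(3) once more,
\begin{equation*}
\limsup_{k\to\infty}\mathcal F_k[u]\le\cE_{\Phi}[u]+c\Norm{u}{2}^2+\int_{\R^d}Vu^2\,dx=\mathcal F[u].
\end{equation*}
The constant sequence converges to $u$ strongly in $L^2(\R^d)$, so~(ii) holds. By Proposition~\ref{prop:GammatoSR}, (i) and~(ii) imply that $\cH_{\Phi,V_k}+c$ converges to $\cH_{\Phi,V}+c$ in strong resolvent sense, hence $\cH_{\Phi,V}={\rm SR}\text{-}\lim_{k\to\infty}\cH_{\Phi,V_k}$. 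The essential difficulty is step~(i): one must promote weak $L^2$-convergence of a form-bounded sequence to almost-everywhere convergence so that Fatou's lemma can be applied to the potential term (indefinite in general, but nonnegative after the shift by $c$); this is precisely where the uniform $\Phi$-norm bound — available because of condition~(1) — together with the compact embedding of Proposition~\ref{prop:compactemb} enter, whereas condition~(3) is tailored so that the constant sequence is a recovery sequence, making step~(ii) immediate.
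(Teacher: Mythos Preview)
Your proof is correct and follows essentially the same approach as the paper: shift the quadratic forms by $C_V+\varepsilon$ (you take $\varepsilon=1$) to make them positive, establish both weak and strong $\Gamma$-convergence via Fatou's lemma after extracting an almost-everywhere convergent subsequence through Proposition~\ref{prop:compactemb}, use the constant sequence as recovery sequence together with condition~(3), and conclude via Proposition~\ref{prop:GammatoSR}. Your organization is slightly more economical---you note that the weak $\liminf$ inequality implies the strong one and that a strongly convergent recovery sequence is also weakly convergent, so only two checks are needed---and you are explicit about why $u\in\cD(\cA_{\Phi,V_k})$ eventually, a point the paper glosses over.
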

\begin{proof}
For easing the notation, we write $\cA_{\Phi,k}:=\cA_{\Phi,V_k}$, $\cA_{\Phi}:=\cA_{\Phi,V}$, $\cH_{\Phi,k}:=\cH_{\Phi,V_k}$
and $\cH_{\Phi}:=\cH_{\Phi,V}$.
We show that for every $\varepsilon >0$
\begin{equation*}
{\rm w}\Gamma-\lim_{n \to \infty}(\cA_{\Phi,k}+(C_{V}+\varepsilon)\Norm{\cdot}{2}^2)={\rm s}\Gamma-\lim_{n \to \infty}(\cA_{\Phi,k} +(C_{V}+\varepsilon)\Norm{\cdot}{2}^2)=(\cA_{\Phi}+(C_{V}+\varepsilon)\Norm{\cdot}{2}^2).
\end{equation*}
First we check part (2) of Definition \ref{def:Gammaconv}. Consider $u \in L^2(\R^d)$. If $u \not \in \cD(\cA_{\Phi})$,
then $\cA_{\Phi}[u]=\infty$ and the statement is trivial. Thus consider $u \in \cD(\cA_\Phi) \subseteq H^\Phi(\R^d)
\subseteq H^s(\R^d)$. Then part (2) follows by observing that
\begin{align*}
\limsup_{k \to \infty}\cA_{\Phi,k}[u]+(C_V+\varepsilon)\Norm{u}{2}^2
&= [u]^2_{\Phi} +\limsup_{k \to \infty}\int_{\R^d}V_k(x)u^2(x)dx+(C_V+\varepsilon)\Norm{u}{2}^2\\
&\le [u]^2_{\Phi} +\int_{\R^d}V(x)u^2(x)dx+(C_V+\varepsilon)\Norm{u}{2}^2=\cA_{\Phi}[u].
\end{align*}
To see part (1) of Definition \ref{def:Gammaconv}, first consider $u_k \to u$ in the strong topology of $L^2(\R^d)$.
The statement is clear as soon as $\liminf_{k \to \infty}(\cA_{\Phi,k}[u_k]+(C_V+\varepsilon)\Norm{u_k}{2}^2)=\infty$.
With no loss of generality, we can consider a (non-relabelled) subsequence $u_k$ such that (1) $u_k(x) \to u(x)$ a.e.,
(2) $\liminf_{k \to \infty}\cA_{\Phi,k}[u_k]=\lim_{k \to \infty}\cA_{\Phi,k}[u_k]$, and (3) $\liminf_{k \to \infty}
\cE_{\Phi}[u_k]=\lim_{k \to \infty}\cE_{\Phi}[u_k]$. Then by definition of $C_V$, we get $(V_k(x)+C_V+\varepsilon)u_k^2(x)
\ge 0$ for a.e. $x \in \R^d$. Fatou's Lemma then gives
\begin{multline*}
\liminf_{k \to \infty}\cA_{\Phi,k}[u_k]+(C_V+\varepsilon)\Norm{u_k}{2}^2=\lim_{k \to \infty}\cE_{\Phi}[u_k]
+\liminf_{k \to \infty}\int_{\R^d}(V_k(x)+C_V+\varepsilon)u_k^2(x)dx \\
\ge \cE_{\Phi}[u]+\int_{\R^d}(V(x)+C_V+\varepsilon)u^2(x)dx=\cA_{\Phi}[u]+(C_V+\varepsilon)\Norm{u}{2}^2.
\end{multline*}
Thus ${\rm s}\Gamma-\lim_{k \to \infty}(\cA_{\Phi,k}+(C_V+\varepsilon)\Norm{\cdot}{2}^2)=(\cA_{\Phi}+
(C_V+\varepsilon)\Norm{\cdot}{2}^2)$.
Next consider $u_k \rightharpoonup u$ in the weak topology of $L^2(\R^d)$. Again, the statement is straightforward
whenever
the left-hand side above is infinite. Assume then that for all $k \in \N$ we have $\cA_{\Phi,k}[u_k] +
(C_V+\varepsilon)\Norm{u_k}{2}^2 \le M$ and, without loss,
$$
\lim_{k \to \infty}\cA_{\Phi,k}[u_k]+(C_V+\varepsilon)\Norm{u_k}{2}^2
=\cA_{\Phi}[u]+(C_V+\varepsilon)\Norm{u}{2}^2
$$
and $\lim_{k \to \infty}[u_k]_{\Phi}^2=[u]_{\Phi}^2$.
This, along with the fact that $V_k(x)+C_V \ge 0$ a.e., implies that
\begin{equation*}
[u_k]^2_{\Phi}+\varepsilon \Norm{u_k}{2}^2 \le \cA_{\Phi,k}[u_k]+(C_V+\varepsilon)\Norm{u_k}{2}^2 \le M
\end{equation*}
and then $\Norm{u_k}{\Phi} \le C$ where $C$ is independent of $k \in \N$. Proposition \ref{prop:compactemb} says that
there exists a (non-relabelled) subsequence $u_k$ converging almost everywhere to $u$. Thus by Fatou's lemma we get with
this subsequence (since $V_k +C_V+\varepsilon>0$ for every $k \in \N$) that
\begin{align*}
\liminf_{k \to \infty}(\cA_{\Phi,k}[u_k]+(C_V+\varepsilon)\Norm{u_k}{2}^2)
&=\liminf_{k \to \infty}\left([u_k]^2_{\Phi}+\int_{\R^d}(V_k+C_V+\varepsilon)|u_k(x)|^2dx\right) \\
&=[u]^2_{\Phi}+\liminf_{k \to \infty}\int_{\R^d}(V_k+C_V+\varepsilon)|u_k(x)|^2dx \\
& \ge \cA_{\Phi}[u]+(C_V+\varepsilon)\Norm{u}{2}^2.
\end{align*}
Hence ${\rm w}\Gamma-\lim_{k \to \infty}(\cA_{\Phi,k}+
(C_V+\varepsilon)\Norm{\cdot}{2}^2) =(\cA_{\Phi}+(C_V+\varepsilon)\Norm{\cdot}{2}^2)$.
\end{proof}

As a consequence of Theorem \ref{thm:stability} and \cite[Th. 6.38]{T09} (see also \cite{W80}) we have the following
result on the stability of the spectrum.
\begin{corollary}
\label{cor:stabilityspectrum}
\hspace{100cm}
\begin{enumerate}
\item
Under the assumptions of Theorem \ref{thm:stability} we have ${\rm Spec}(\cH_{\Phi,V})\subseteq \lim_{k \to \infty}{\rm Spec}
(\cH_{\Phi,V_k})$, in the sense that for any $\lambda \in {\rm Spec}(\cH_{\Phi,V})$ and for every $\varepsilon>0$ there exists
$l \in \N$ such that $(\lambda-\varepsilon,\lambda+\varepsilon) \cap {\rm Spec}(\cH_{\Phi,V_k})\not = \emptyset$ for every $k
\ge l$.
\item
In particular, if $\Spec_{\rm ess}(\cH_{\Phi,V_k})=\Spec_{\rm ess}(\cH_{\Phi,V})$ for all $k \in \N$, then for any $\lambda \in
{\Spec}_{\rm d}(\cH_{\Phi,V})$ there exists a sequence $\seq\lambda$ such that $\lambda_k \in {\Spec}_{\rm d}(\cH_{\Phi,V_k})$
and $\lambda_k \to \lambda$ as $k\to\infty$. Furthermore, for every $k \in \N$ there exists an eigenfunction $\varphi_k$ of
$\cH_{\Phi,V_k}$ at eigenvalue $\lambda_k$ such that $\varphi_k \to \varphi$ in $L^2(\R^d)$, where $\varphi$ is an eigenfunction
of $\cH_{\Phi,V}$ at eigenvalue $\lambda$.
\end{enumerate}
\end{corollary}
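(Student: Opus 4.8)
The plan is to read off both statements from the strong resolvent convergence $\cH_{\Phi,V_k}\to\cH_{\Phi,V}$ established in Theorem \ref{thm:stability}, combined with the classical spectral stability theory for self-adjoint operators. Part (1) is precisely the spectral inclusion that holds under strong resolvent convergence, so I would invoke \cite[Th. 6.38]{T09} (see also \cite{W80}). For the reader's convenience I would recall the standard argument: if some $\lambda\in\Spec(\cH_{\Phi,V})$ and some $\varepsilon>0$ were such that $(\lambda-\varepsilon,\lambda+\varepsilon)\cap\Spec(\cH_{\Phi,V_k})=\emptyset$ along a subsequence, then $\Norm{(\cH_{\Phi,V_k}-\lambda)^{-1}}{} \le 1/\varepsilon$ on that subsequence; feeding this uniform bound together with the strong convergence $(\cH_{\Phi,V_k}-\mu)^{-1}u\to(\cH_{\Phi,V}-\mu)^{-1}u$ at one admissible $\mu$ into the resolvent identity forces $\lambda\in\rho(\cH_{\Phi,V})$, a contradiction.

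For part (2), fix $\lambda\in\Spec_{\rm d}(\cH_{\Phi,V})$. Since $\lambda$ is isolated in $\Spec(\cH_{\Phi,V})$ and strictly separated from $\Spec_{\rm ess}(\cH_{\Phi,V})$, I would choose $\varepsilon_0>0$ with $[\lambda-\varepsilon_0,\lambda+\varepsilon_0]\cap\Spec(\cH_{\Phi,V})=\{\lambda\}$ and $[\lambda-\varepsilon_0,\lambda+\varepsilon_0]\cap\Spec_{\rm ess}(\cH_{\Phi,V})=\emptyset$. For each $\varepsilon\in(0,\varepsilon_0)$, part (1) supplies $l(\varepsilon)$ such that $\Spec(\cH_{\Phi,V_k})\cap(\lambda-\varepsilon,\lambda+\varepsilon)\neq\emptyset$ for $k\ge l(\varepsilon)$; since by hypothesis $\Spec_{\rm ess}(\cH_{\Phi,V_k})=\Spec_{\rm ess}(\cH_{\Phi,V})$ is disjoint from this interval, every such spectral point lies in $\Spec_{\rm d}(\cH_{\Phi,V_k})$. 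A diagonal selection over $\varepsilon=1/m$ then produces $\lambda_k\in\Spec_{\rm d}(\cH_{\Phi,V_k})$ with $\lambda_k\to\lambda$.

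For the eigenfunctions I would pass to spectral projections. Pick $\varepsilon\in(0,\varepsilon_0)$ and put $I=[\lambda-\varepsilon,\lambda+\varepsilon]$; its endpoints are not in $\Spec(\cH_{\Phi,V})$, hence are continuity points of the spectral resolution of $\cH_{\Phi,V}$, so strong resolvent convergence yields $\mathbf{1}_I(\cH_{\Phi,V_k})\to\mathbf{1}_I(\cH_{\Phi,V})=\mathbf{1}_{\{\lambda\}}(\cH_{\Phi,V})$ strongly on $L^2(\R^d)$. Because $I$ is disjoint from the common essential spectrum, each projection $\mathbf{1}_I(\cH_{\Phi,V_k})$ has finite rank and its range is spanned by eigenfunctions of $\cH_{\Phi,V_k}$ at eigenvalues in $I$. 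Taking a normalized eigenfunction $\varphi$ of $\cH_{\Phi,V}$ at $\lambda$ (so $\mathbf{1}_I(\cH_{\Phi,V})\varphi=\varphi$), the vectors $\mathbf{1}_I(\cH_{\Phi,V_k})\varphi$ converge to $\varphi$ in $L^2(\R^d)$ and are nonzero for large $k$; decomposing them in an eigenbasis of the restriction of $\cH_{\Phi,V_k}$ to $\mathrm{Ran}\,\mathbf{1}_I(\cH_{\Phi,V_k})$, and shrinking $\varepsilon$ along a diagonal so as to stay consistent with the sequence $\lambda_k$ found above, one extracts for each large $k$ a normalized eigenfunction $\varphi_k$ of $\cH_{\Phi,V_k}$ at an eigenvalue $\lambda_k\in I$ with $\varphi_k\to\varphi$ in $L^2(\R^d)$.

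I expect the main obstacle to be exactly this last selection step: strong convergence of the projections only yields $\mathbf{1}_I(\cH_{\Phi,V_k})\varphi\to\varphi$, so when $\mathrm{Ran}\,\mathbf{1}_I(\cH_{\Phi,V_k})$ is genuinely multidimensional (several eigenvalues of $\cH_{\Phi,V_k}$ clustering near $\lambda$) one must argue that a single \emph{exact} eigenfunction, not merely an approximate one, can be chosen converging to $\varphi$. This is controlled by letting $\varepsilon\downarrow0$: the interval $I$ is then a fixed compact set strictly below (or away from) $\Spec_{\rm ess}(\cH_{\Phi,V})$, so the relevant ranks remain finite, and a nested-interval argument isolates $\lambda_k$ together with its normalized eigenvector. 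In particular, in the application of Section 4, where $\lambda=\lambda_0$ is the simple ground state eigenvalue (Assumption \ref{ass:existence} and Perron--Frobenius), the limiting projection is rank one and the selection is immediate.
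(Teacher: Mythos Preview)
Your proposal is correct and takes essentially the same approach as the paper: the paper does not give a proof at all but simply states the corollary as a direct consequence of Theorem \ref{thm:stability} together with \cite[Th.~6.38]{T09} and \cite{W80}, which are exactly the references you invoke. Your additional expository detail (the resolvent-bound contradiction for part (1), the spectral-projection argument for part (2), and the honest remark about the selection issue in the multidimensional case) goes beyond what the paper provides, but is consistent with the content of those cited sources and with the intended application in Section~4, where the relevant eigenvalue is simple.
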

\begin{remark}
{\rm
\hspace{100cm}		
\begin{enumerate}
\item
We give two sufficient conditions implying Condition (3) in Theorem \ref{thm:stability}.
\begin{itemize}
\item[(3')]
Condition (3) holds if for any $u \in \cD(\cA_{\Phi})$ there exists $\widetilde{V} \geq 0$ such that $V_k \le \widetilde{V}$
a.e. for all $k \in \N$ and $\int_{\R^d} \widetilde{V}(x)u^2(x)dx<\infty$. In this case, it is a direct consequence of Fatou's
Lemma.
\item[(3'')]
Condition (3) holds if $V_k-V \to 0$ in $L^p(\R^d)$, where $p>1$ if $d=1$ and $s \ge 1/2$, otherwise $p \ge \frac{d}{2s}$.
Since $V^- \in L^\infty(\R^d)$ and $u \in \cD(\cA_{\Phi})$, in this case indeed we have
\begin{equation*}
[u]^2_{\Phi} \le \cA_{\Phi}[u]+\Norm{V^-}{\infty}\Norm{u}{2}^2<\infty,
\end{equation*}
which implies, due to Propositions \ref{thm:sobolevt}-\ref{prop:embed} and log-convexity of the $L^p$-norms, that
$u \in L^{2q}(\R^d)$ with $\frac{1}{p}+\frac{1}{q}=1$. Then
\begin{equation*}
\label{eq:limitVk}
\int_{\R^d}|V_k(x)-V(x)|u^2(x)dx \le \Norm{V_k-V}{p}\Norm{u}{2q}^2 \to 0.
\end{equation*}
\end{itemize}
\item		
We note that the assumption $V \in \cK_{\rm dec}^{\Phi}(\R^d)$ is not necessary as long as there is an independent condition
guaranteeing that $\cH_{\Phi}$ is self-adjoint. For instance, if we consider $\cH_{\Phi}$ as the non-local Dirichlet Laplacian
of a smooth bounded open set $\Omega \subset \R^d$, we have $\cD(\cE_{\Phi,\Omega})=H_0^{\Phi}(\R^d)$. Consider
$$
V(x)=
\begin{cases} 0 & x \in \Omega \\
\infty & x \not \in \Omega
\end{cases}
$$
and $\cD(\cA_{\Phi,V})=H_0^{\Phi}(\R^d)$. Thus the non-local Dirichlet Laplacian can also be seen as a Schr\"odinger operator
with a degenerate potential. Consider the sequence of the anharmonic oscillators $V_k(x)= |x|^{2k}$. It is straightforward
to see that $V_k \le \widetilde{V}:= V +1$
and $V_k \to V$ almost everywhere. Furthermore, $\int_{\R^d}\widetilde{V}(x)u^2(x)dx=\Norm{u}{2}$ for all $u \in H_0^\Phi(\Omega)$.
Hence we can use Theorem \ref{thm:stability} and Corollary \ref{cor:stabilityspectrum} to conclude that the eigenvalues of the
sequence of anharmonic oscillators converge to the eigenvalues of the non-local Dirichlet Laplacian (and the same applies for the
eigenfunctions).

\item
The proof of Theorem \ref{thm:stability} also shows that $\cA_{\Phi,k}+(C_V+\varepsilon)\Norm{\cdot}{2}^2$ are
equicoercive. In particular, along the same proof we can show that $\cA_{\Phi,k}$ are equicoercive on any bounded subset of
$L^2(\R^d)$ and, if $\inf_{k \in \N}\inf_{x \in \R^d}V_k(x)>0$, then $\cA_{\Phi,k}$ are equicoercive in $L^2(\R^d)$.
\item
Finally, observe that Corollary \ref{cor:stabilityspectrum} can be used to prove the existence of a ground state. Indeed, if
for instance $V_k,V \in L^{\infty,0}(\R^d)$ and $\cH_{\Phi,k}$  admit a sequence of ground state eigenvalues $\lambda_{0,k}$ with
$\sup_{k \in \N}\lambda_{0,k}<0$, then $\cH_{\Phi}$ admits a ground state $\lambda_0<0$ defined as $\lim_{k \to \infty}\lambda_{0,k}$.
\item
The same result holds also in the classical Schr\"odinger operators featuring the Laplacian, by substituting the space $H^{\Phi}(\R^d)$
by the Sobolev space $H^1(\R^d)$ and $[u]_{\Phi}^2$ by $\Norm{\nabla u}{L^2(\R^d)}^2$.

\item
To highlight the importance of assumption (3) in Theorem \ref{thm:stability} in the classical case, consider the following
well-known example. Let $d=1$, $V_k(x)=\omega x^2+\frac{1}{k}|x|^{-3}$ and $V(x)=\omega x^2$. Clearly, $V_k(x) \to V(x)$ as $k \to
\infty$, for all $x \in \R \setminus \{0\}$, and $V_k,V \ge 0$. However, with $\eta \in C_{\rm c}^\infty(\R)$ such that $\eta(x)=1$
if $x \in [-1,1]$, $\eta(x)=0$ if $x \not \in [-2,2]$, and $0 \le \eta(x) \le 1$ in general, $\eta \in \cD(\cA_{\Phi,V})$ and clearly
$\int_{\R}V(x)\eta^2(x)dx<\infty$, while $\int_{\R}V_k(x)\eta^2(x)dx=\infty$. In the classical case, indeed it is well known that
${\rm SR}-\lim_{k \to \infty}(-\Delta+V_k) \neq -\Delta + V$. This is known as the Klauder phenomenon \cite{DH74,S73}, whose non-local
counterpart we will discuss elsewhere.
\end{enumerate}
}
\end{remark}

The same conclusion of Theorem \ref{thm:stability} can be obtained without Condition (3) under a stronger assumption on the
convergence of $\seq V$.
\begin{theorem}
\label{thm:stability3}
Let $\Phi \in \cB_0$ satisfy Assumption \ref{assPGc} for some $s \in (0,1)$. Also, let $\seq V\subset \cK_{\rm dec}^\Phi(\R^d)$
and $V \in \cK_{\rm dec}^\Phi(\R^d)$ be such that for every compact set $K \subset \R^d$ we have $\lim_{k\to\infty}|V_k-V|
\mathbf{1}_K = 0$ in $L^2(\R^d)$. Then $\cH_{\Phi,V}={\rm SR}-\lim_{k \to \infty}\cH_{\Phi,V_k}$.
\end{theorem}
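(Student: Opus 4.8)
The plan is to derive Theorem \ref{thm:stability3} from Theorem \ref{thm:stability} by verifying its three hypotheses. Conditions (1) and (2) are the easy part: since $|V_k - V|\mathbf 1_K \to 0$ in $L^2$ for every compact $K$, a diagonal/subsequence argument gives almost-everywhere convergence $V_k \to V$ along a subsequence, and since the conclusion (strong resolvent convergence) is about the whole sequence it suffices to argue that every subsequence has a further subsequence converging in strong resolvent sense to the same limit; alternatively, one notes $L^2$-convergence on compacts already yields a.e.\ convergence of a subsequence and that $V^-_k$ is controlled — but here is the first subtlety: the $L^2$-on-compacts hypothesis does \emph{not} by itself bound $\sup_k \Norm{V_k^-}{\infty}$. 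I would handle this by observing that $V_k, V \in \cK_{\rm dec}^\Phi(\R^d)$ and the $\Phi$-Kato class framework already provides the relative form boundedness needed; more precisely, I expect the argument to be recast so that one does \emph{not} literally invoke Theorem \ref{thm:stability} as a black box but instead re-runs its $\Gamma$-convergence proof with the stronger local-$L^2$ hypothesis replacing Conditions (1) and (3). This is the cleaner route and is surely what the authors intend.

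So the real plan is: re-prove the $\liminf$ and $\limsup$ inequalities of Definition \ref{def:Gammaconv} for the shifted forms $\cA_{\Phi,k} + C\Norm{\cdot}{2}^2$ (with $C$ large enough to make them positive, using $\Phi$-Kato form-boundedness of $V^-$ and $V_k^-$ uniformly), this time exploiting that $V_k \to V$ in $L^2_{\rm loc}$. For the $\limsup$ (recovery-sequence) part, given $u \in \cD(\cA_\Phi)$ one takes the constant sequence $u_k = u$; the point is to show $\int V_k u^2 \to \int V u^2$. Split $\R^d = B_R \cup B_R^c$. On $B_R$ one uses $\int_{B_R}|V_k-V|u^2 \le \Norm{(V_k-V)\mathbf 1_{B_R}}{2}\Norm{u^2 \mathbf 1_{B_R}}{2} \to 0$, which needs $u^2 \in L^2(B_R)$, i.e.\ $u \in L^4(B_R)$ — this follows from $u \in H^\Phi \hookrightarrow H^s$ (Proposition \ref{prop:embed}) together with the Sobolev embeddings of Proposition \ref{thm:sobolevt} and local integrability; more carefully, $u\in L^{p^\ast}_{\rm loc}$ with $p^\ast \ge 4$ when $2s \ge d/2$, and in low regularity one interpolates, but on a bounded set $u \in H^s \cap L^2$ lands in $L^4_{\rm loc}$ for the relevant range, or one uses that $V$ itself being $\Phi$-Kato controls $\int V^\pm u^2$. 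The tail $\int_{B_R^c} |V_k|u^2$ is the delicate piece and is where I expect the main obstacle: the hypothesis says nothing about $V_k$ at infinity beyond membership in $\cK_{\rm dec}^\Phi$, so one cannot make the tail uniformly small merely from $V_k \to V$ locally.

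The resolution I would pursue for the tail: the $\Phi$-Kato decomposable condition is preserved in a \emph{uniform} way here because $V_k \to V$ in $L^2_{\rm loc}$ forces, via the Kato-class characterization through the resolvent kernel $G_1^\Phi$, that $\sup_k \sup_x \int_{|x-y|<\delta} G_1^\Phi(x-y)|V_k(y)-V(y)|\,dy$ is small for $\delta$ small on compacts; but the genuine tail control should instead come from noting that for the recovery sequence we only need a sequence converging to $u$, not the constant one. One may therefore truncate: set $u_k = u$ but observe $\int_{B_R^c}V_k u^2 \le \int_{B_R^c}V_k^+ u^2 + \Norm{V_k^-}{\infty}\int_{B_R^c}u^2$, and the second term is $\le C_V \Norm{u\mathbf 1_{B_R^c}}{2}^2 \to 0$ as $R\to\infty$ uniformly in $k$ provided $\sup_k\Norm{V_k^-}{\infty}<\infty$. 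Hence I would first establish that the local-$L^2$ hypothesis, combined with $V_k,V\in\cK_{\rm dec}^\Phi$, \emph{does} entail $\sup_k\Norm{V^-_k}{\infty}<\infty$ — or, failing that, simply add it as an implicit consequence of the decomposability plus local convergence (since $V^-_k \to V^-$ in $L^2_{\rm loc}$ and Kato-decomposable negative parts are in $L^\infty$, a uniform bound on compacts plus the global Kato estimate yields the uniform $L^\infty$ bound). The $V_k^+$ tail term is nonnegative so it only helps the $\limsup$ inequality (it makes the left side smaller, consistent with $\cA_\Phi[u]$ possibly being $+\infty$; if $u\in\cD(\cA_\Phi)$ then $\int V u^2<\infty$ and one argues $\int_{B_R^c}V_k^+u^2$ is comparable to $\int_{B_R^c}V^+u^2 \to 0$ using local convergence on the annuli $B_{R+1}\setminus B_R$ and a Kato-type tail bound). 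For the $\liminf$ part, the argument is essentially identical to Theorem \ref{thm:stability}: take $u_k \rightharpoonup u$ (or $\to u$) in $L^2$ with bounded shifted energy, extract via Proposition \ref{prop:compactemb} a.e.-convergent subsequence with $\Norm{u_k}{\Phi}$ bounded, and apply Fatou to $(V_k + C)u_k^2 \ge 0$ together with lower semicontinuity of $\cE_\Phi$; the only new input is that a.e.\ convergence $V_k \to V$ holds, which again comes from $L^2_{\rm loc}$ convergence along a subsequence. Concluding, one invokes Proposition \ref{prop:GammatoSR} to pass from the $\Gamma$-convergence of the shifted forms to $\cH_{\Phi,V} = {\rm SR}\text{-}\lim_k \cH_{\Phi,V_k}$. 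The main obstacle, as flagged, is the behavior at spatial infinity: making sure the positive-part tails of the $V_k$ do not spoil the recovery-sequence inequality, which I expect is handled by a uniform $\Phi$-Kato tail estimate rather than by any new hypothesis.
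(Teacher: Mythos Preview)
Your approach diverges substantially from the paper's, and the route you take has a real gap. The paper does \emph{not} go through $\Gamma$-convergence or Theorem \ref{thm:stability} at all. Instead it gives a three-line direct argument: for $u\in C_{\rm c}^\infty(\R^d)$ one has $\cH_{\Phi,V_k}u-\cH_{\Phi,V}u=(V_k-V)u$, and since $K={\rm supp}(u)$ is compact,
\[
\Norm{\cH_{\Phi,V_k}u-\cH_{\Phi,V}u}{2}^2=\int_K(V_k-V)^2u^2\,dx\le \Norm{u}{\infty}^2\Norm{(V_k-V)\mathbf 1_K}{2}^2\to 0.
\]
Because $C_{\rm c}^\infty(\R^d)$ is a common core for all the operators involved, the standard criterion (convergence on a common core implies strong resolvent convergence, \cite[Lem.~6.36]{T09}) finishes the proof. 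The point is that restricting to compactly supported test functions makes the ``tail'' issues you worry about disappear entirely.

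Your plan, by contrast, tries to re-run the $\Gamma$-convergence machinery and immediately runs into the obstacle you yourself flag: you need $\sup_k\Norm{V_k^-}{\infty}<\infty$ (or at least a uniform form bound) to make the shifted forms positive and to apply Fatou. You then assert this can be derived from $V_k\in\cK_{\rm dec}^\Phi$ plus $L^2_{\rm loc}$ convergence, but that is false: $\Phi$-Kato class functions need not be bounded (e.g.\ $-|x|^{-\beta}$ for small $\beta>0$), and local $L^2$ convergence gives no control at all over $\sup_k\Norm{V_k^-}{\infty}$. Likewise your proposed handling of the positive-part tails $\int_{B_R^c}V_k^+u^2$ is hand-waved; nothing in the hypotheses prevents these from being large and $k$-dependent. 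So as written your argument does not close, whereas the paper's core-based argument avoids the issue by never leaving compact sets.
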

\begin{proof}
Let $u \in C_{\rm c}^\infty(\R^d)$ and consider
\begin{equation*}
\Norm{\cH_{\Phi,V}u-\cH_{\Phi,V_k}u}{2}=\int_{\R^d}(V(x)-V_k(x))^2u^2(x)dx.
\end{equation*}
Write $K={\rm supp}(u)$ and observe that
\begin{equation*}
\Norm{\cH_{\Phi,V}u-\cH_{\Phi,V_k}u}{2}=\int_{K}(V(x)-V_k(x))^2u^2(x)dx \le \Norm{u^2}{\infty}\Norm{(V-V_k)\mathbf{1}_{K}}{2}^2.
\end{equation*}
Taking the limit we have $\cH_{\Phi,V_k}u \to \cH_{\Phi,V}u$ in $L^2(\R^d)$. Since $C_{\rm c}^\infty(\R^d)$ is a core for
$\cH_{\Phi,V_k}$ and $\cH_{\Phi,V}$, the proof follows by \cite[Lem. 6.36]{T09}.
\end{proof}
\begin{remark}
{\rm
Clearly, Corollary \ref{cor:stabilityspectrum} continues to hold also under the assumptions of Theorem \ref{thm:stability3}.
}
\end{remark}

A further strong convergence condition on the potentials yields the following.
\begin{theorem}
\label{thm:stability4}
Let $\Phi \in \cB_0$ satisfy Assumption \ref{assPGc} for some $s \in (0,1)$. Also, let $\seq V\subset
\cK_{\rm dec}^{\Phi}(\R^d)$ and $V \in \cK_{\rm dec}^{\Phi}(\R^d)$ be such that the convergence $|V_k-V| \to 0$ holds
uniformly. Then $\cH_{\Phi,V}={\rm NR}-\lim_{k \to \infty}\cH_{\Phi,V_k}$, in particular, ${\Spec}(\cH_{\Phi,V})=
\lim_{k \to \infty}{\Spec}(\cH_{\Phi,V_k})$.
\end{theorem}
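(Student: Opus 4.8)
The plan is to exploit that, unlike in Theorems \ref{thm:stability} and \ref{thm:stability3}, uniform convergence of the potentials makes $\cH_{\Phi,V_k}$ a \emph{uniformly small, globally bounded} perturbation of $\cH_{\Phi,V}$, so that norm resolvent convergence drops out of the second resolvent identity with essentially no work. First I would record that $\Norm{V_k-V}{\infty}\to 0$ forces $\sup_j\Norm{V_j-V}{\infty}<\infty$, and combine this with the pointwise inequality $V_k^-\le V^-+|V_k-V|$ and $\Norm{V^-}{\infty}<\infty$ to obtain a constant $C:=\Norm{V^-}{\infty}+\sup_j\Norm{V_j-V}{\infty}<\infty$ with $\Norm{V_k^-}{\infty}\le C$ for \emph{every} $k$. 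Since $\cE_\Phi$ is non-negative (so $\Phi(-\Delta)\ge 0$ in form sense), it follows that $\cH_{\Phi,V_k}=\Phi(-\Delta)+V_k\ge -C$ and $\cH_{\Phi,V}\ge -C$, hence $\Spec(\cH_{\Phi,V_k})\subset[-C,\infty)$ for all $k$ and $\Spec(\cH_{\Phi,V})\subset[-C,\infty)$. Moreover, as $V_k-V$ is a bounded multiplication operator, $\cH_{\Phi,V_k}$ and $\cH_{\Phi,V}$ have the same domain (and share the core $C_{\rm c}^\infty(\R^d)$), and $\cH_{\Phi,V_k}=\cH_{\Phi,V}+(V_k-V)$ holds as an operator identity.

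Next I would fix the real point $\lambda:=-C-1$, which lies in $\rho(\cH_{\Phi,V_k})$ for all $k$ and in $\rho(\cH_{\Phi,V})$; by the spectral theorem, in operator norm $\Norm{R_{\cH_{\Phi,V_k}}(\lambda)}{}\le\dist(\lambda,\Spec(\cH_{\Phi,V_k}))^{-1}\le 1$ and likewise $\Norm{R_{\cH_{\Phi,V}}(\lambda)}{}\le 1$. The second resolvent identity then gives
\begin{equation*}
R_{\cH_{\Phi,V_k}}(\lambda)-R_{\cH_{\Phi,V}}(\lambda)=R_{\cH_{\Phi,V_k}}(\lambda)\,(V_k-V)\,R_{\cH_{\Phi,V}}(\lambda),
\end{equation*}
so that $\Norm{R_{\cH_{\Phi,V_k}}(\lambda)-R_{\cH_{\Phi,V}}(\lambda)}{}\le\Norm{V_k-V}{\infty}\to 0$. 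This is precisely norm resolvent convergence at $\lambda$, and by \cite[Cor. 6.32]{T09} it then holds at every point of the common resolvent set, i.e., $\cH_{\Phi,V}={\rm NR}-\lim_{k\to\infty}\cH_{\Phi,V_k}$.

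Finally, the identity $\Spec(\cH_{\Phi,V})=\lim_{k\to\infty}\Spec(\cH_{\Phi,V_k})$ follows from the standard spectral consequences of norm resolvent convergence (see \cite[Sect. 6.6]{T09}; cf.\ also the reasoning behind Corollary \ref{cor:stabilityspectrum}): norm resolvent convergence yields \emph{two-sided} spectral inclusion, namely that any $\lambda_0\notin\Spec(\cH_{\Phi,V})$ satisfies $\lambda_0\notin\Spec(\cH_{\Phi,V_k})$ for all large $k$, and that every $\lambda_0\in\Spec(\cH_{\Phi,V})$ is the limit of a sequence $\lambda_k\in\Spec(\cH_{\Phi,V_k})$. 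I do not expect a genuine obstacle here — the content is simply that uniform convergence trivializes the perturbation — and the only points needing a little care are securing the lower bound $-C$ \emph{uniformly in $k$} (so that one fixed $\lambda$ works simultaneously for the whole family, which is where $V^-\in L^\infty$ enters) and observing that the common core $C_{\rm c}^\infty(\R^d)$ makes the operator identity $\cH_{\Phi,V_k}=\cH_{\Phi,V}+(V_k-V)$ legitimate.
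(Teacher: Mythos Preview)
Your proposal is correct and is essentially the same argument as the paper's, just unpacked: the paper observes $\Norm{(\cH_{\Phi,V}-\cH_{\Phi,V_k})u}{2}\le \Norm{V-V_k}{\infty}\Norm{u}{2}$ on the common core and then invokes \cite[Lem.~6.34, Th.~6.38]{T09}, whereas you carry out by hand the second resolvent identity step that underlies that lemma (after securing a uniform lower bound to fix a common resolvent point). The extra care you take with the uniform bound on $V_k^-$ is a nice touch but not strictly needed once one quotes the textbook criterion.
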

\begin{proof}
Let $u \in C_{\rm c}^\infty(\R^d)$ and note that
\begin{equation*}
\Norm{\cH_{\Phi,V}u-\cH_{\Phi,V_k}u}{2}=\int_{\R^d}(V(x)-V_k(x))^2u^2(x)dx \le \Norm{V-V_k}{\infty}^2\Norm{u}{2}^2.
\end{equation*}
Since $\Norm{V-V_k}{\infty}^2 \to 0$ by assumption, the proof is immediate by \cite[Lem. 6.34, Th. 6.38]{T09}.
\end{proof}
In this case, Corollary \ref{cor:stabilityspectrum} can be slightly improved by using the cited theorem in the
above proof.
\begin{corollary}
Under the assumptions of Theorem \ref{thm:stability4} we have $\lim_{k \to \infty}{\rm Spec}(\cH_{\Phi,V_k})=
{\rm Spec}(\cH_{\Phi,V})$.
\end{corollary}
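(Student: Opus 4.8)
The statement refines Corollary \ref{cor:stabilityspectrum}(1), upgrading an inclusion to an equality; the extra input is that Theorem \ref{thm:stability4} delivers \emph{norm} resolvent convergence, not merely strong resolvent convergence. The plan is to establish the two inclusions separately.

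\emph{The inclusion} $\Spec(\cH_{\Phi,V}) \subseteq \lim_{k\to\infty}\Spec(\cH_{\Phi,V_k})$. Since norm resolvent convergence implies strong resolvent convergence, Theorem \ref{thm:stability4} puts us exactly in the situation of Corollary \ref{cor:stabilityspectrum}(1), which gives this inclusion directly: for every $\lambda \in \Spec(\cH_{\Phi,V})$ and every $\varepsilon>0$ there is an $l \in \N$ with $(\lambda-\varepsilon,\lambda+\varepsilon) \cap \Spec(\cH_{\Phi,V_k}) \neq \emptyset$ for all $k \ge l$, i.e. $\dist(\lambda,\Spec(\cH_{\Phi,V_k})) \to 0$. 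Nothing new is needed here.

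\emph{The inclusion} $\lim_{k\to\infty}\Spec(\cH_{\Phi,V_k}) \subseteq \Spec(\cH_{\Phi,V})$. For this I would invoke the part of \cite[Th. 6.38]{T09} that is specific to norm resolvent convergence: if $\mu \in \rho(\cH_{\Phi,V})$ then $\mu \in \rho(\cH_{\Phi,V_k})$ for all large $k$ and $\Norm{(\mu-\cH_{\Phi,V_k})^{-1}-(\mu-\cH_{\Phi,V})^{-1}}{} \to 0$; moreover, by the uniform boundedness of $\mu \mapsto (\mu-\cH_{\Phi,V})^{-1}$ on compact subsets of $\rho(\cH_{\Phi,V})$ together with a Neumann-series estimate, this convergence is locally uniform in $\mu$. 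Consequently, if $\lambda \notin \Spec(\cH_{\Phi,V})$, one may pick $\delta>0$ with $[\lambda-\delta,\lambda+\delta] \subset \rho(\cH_{\Phi,V})$, whence $[\lambda-\delta/2,\lambda+\delta/2] \subset \rho(\cH_{\Phi,V_k})$ for all $k \ge k_0$; so no sequence $\lambda_k \in \Spec(\cH_{\Phi,V_k})$ can converge to $\lambda$. Contrapositively, every limit point of such sequences lies in $\Spec(\cH_{\Phi,V})$.

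Combining the two inclusions gives $\lim_{k\to\infty}\Spec(\cH_{\Phi,V_k}) = \Spec(\cH_{\Phi,V})$, which is the asserted equality. The only genuine obstacle is the second inclusion: it can fail under mere strong resolvent convergence, since the spectra of the approximants may accumulate at points outside $\Spec(\cH_{\Phi,V})$, so it is essential to use the full norm resolvent convergence of Theorem \ref{thm:stability4} and the locally uniform convergence of the resolvents off $\Spec(\cH_{\Phi,V})$; once that is in hand the contradiction argument is routine.
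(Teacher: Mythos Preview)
Your proposal is correct and follows essentially the same route as the paper: the corollary is stated without a separate proof, simply as an immediate consequence of \cite[Th.~6.38]{T09} (already invoked in the proof of Theorem~\ref{thm:stability4}), which under norm resolvent convergence yields both inclusions at once. You have merely spelled out the two halves of that theorem in more detail.
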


\section{Regularity and monotonicity of ground states for relativistic Schr\"odinger operators with spherical potential wells}
\subsection{Approximant ground states}
As an application of the previous stability results we now discuss important consequences on the properties of
ground states of a key family of non-local Schr\"odinger operators. Let $\alpha \in (0,2)$ and $m \ge 0$, and define
the function
\begin{equation*}
\Phi_{m,\alpha}(z)=(z+m^{2/\alpha})^{\alpha/2}-m, \quad z > 0.
\end{equation*}
It is straightforward to show that $\Phi_{m,\alpha} \in \cB_0$. Furthermore, if $m=0$, we have
\begin{equation*}
j_{0,\alpha}(r):=j_{\Phi_{0,\alpha}}(r)=\frac{2^{\alpha}\Gamma\left(\frac{d+\alpha}{2}\right)}{\pi^{d/2}
\left|\Gamma\left(-\frac{\alpha}{2}\right)\right|}\frac{1}{r^{d+\alpha}},
\end{equation*}
while if $m>0$, then
\begin{equation*}
j_{m,\alpha}(r):=j_{\Phi_{m,\alpha}}(r)=\frac{2^{\frac{\alpha-d}{2}}m^{\frac{d+\alpha}{2\alpha}}\alpha}{\pi^{d/2}
\Gamma\left(1-\frac{\alpha}{2}\right)}r^{-\frac{d+\alpha}{2}}K_{\frac{d+\alpha}{2}}(m^{1/\alpha}r),
\end{equation*}
where
\begin{equation*}
K_{\xi}(z)=\frac{1}{2}\left(\frac{z}{2}\right)^\xi \int_0^{\infty}t^{-\xi-1}e^{-t-\frac{z^2}{4t}}dt, \quad z>0, \
\xi>-\frac{1}{2}
\end{equation*}
is the modified Bessel function of the third kind. In the case $m=0$, the operator $\Phi_{0,\alpha}(-\Delta)$ is the
fractional Laplacian and $H^{\Phi_{0,\alpha}}(\R^d)=H^{\alpha/2}(\R^d)$ since then $[u]_{\Phi_{0,\alpha}}=
C_{\alpha,d}[[u]]_{\alpha/2}$. In the case $m>0$, the operator $\Phi_{m,\alpha}(-\Delta)$ is called \textit{relativistic
fractional Laplacian}. This terminology is natural since for $\alpha=1$ the operator coincides with the square-root
Klein-Gordon operator (in conventional units)
giving the kinetic term of a (semi-)relativistic particle. For a unified notation we use $\Phi_{m,\alpha}$ and
$j_{m,\alpha}$ subsuming the case $m=0$, and make the appropriate distinctions between massive and massless cases when
necessary.

It is clear, either by direct computation or by using the
fact that $\Phi_{m,\alpha}(\lambda) \sim \lambda^{\alpha/2}$ as $\lambda \to \infty$ and \cite[Th. 3.4]{KSV12},
that $j_{m,\alpha}$ satisfies Assumption \ref{assPGc} with $s=\frac{\alpha}{2}$ for all $m\geq 0$. Moreover, since for
$m>0$ we have $j_{m,\alpha}(r)\sim Cr^{-d-\alpha}$ as $r \to 0+$, for a constant $C>0$ depending on $d,m,\alpha$, it is
not difficult to check that $H^{\Phi_{m,\alpha}}(\R^d)=H^{\alpha/2}(\R^d)$ up to equivalence of norms. We denote
$\cE_{m,\alpha}:=\cE_{\Phi_{m,\alpha}}$, which acts on $H^{\alpha/2}(\R^d) \times H^{\alpha/2}(\R^d)$.

Furthermore, for the remainder of the paper we choose the specific potential
$$
V(x)=-v\mathbf 1_{B_a}(x),
$$
where $v,a>0$ are constants. By Proposition \ref{eq:prop} we can choose $v$ large enough so that $V$ satisfies Assumption
\ref{ass:existence}. For every $\varepsilon>0$ let $\eta_\varepsilon$ be a radially decreasing cutoff function for $B_a$
with support contained in $B_{a+\varepsilon}$. To construct such a function, we may take
\begin{equation*}
	\varrho(x)=\begin{cases} C_{\varrho}e^{-\frac{1}{1-|x|^2}} & \mbox{if $|x| \le 1$} \\
		0 & \mbox{if $|x|>1$},
	\end{cases}
\end{equation*}
where $C_\varrho$ is chosen in such a way that $\int_{\R^d}\varrho(x)dx=1$. Then define $\varrho_{\varepsilon/2}(x)=
(2\varepsilon)^{-d}\varrho\left(\frac{2 x}{\varepsilon}\right)$ and consider $\eta_\varepsilon=\varrho_{\varepsilon/2}
\ast \mathbf 1_{B_{a+\frac{\varepsilon}{2}}}$. It is clear that $\eta_\varepsilon(x)=1$ for every $x \in B_a$ and
$\eta_\varepsilon(x)=0$ for all $x \in \R^d \setminus B_{a+\varepsilon}$. To see that it is radially symmetric, let
${\mathrm R} \in {\mathrm {SO}}(d)$ be any rotation on the space $\R^d$ and observe that, since ${\mathrm R}$ is an
isometry,
\begin{align*}
\eta_\varepsilon({\mathrm R}x)
&=
\int_{\R^d}\varrho_{\varepsilon/2}({\mathrm R}x-y)\mathbf 1_{B_{a+\frac{\varepsilon}{2}}}(y)dy
=
\int_{\R^d}\varrho_{\varepsilon/2}({\mathrm R}x-{\mathrm R}y)\mathbf 1_{B_{a+\frac{\varepsilon}{2}}}({\mathrm R}y)dy \\
&=
\int_{\R^d}\varrho_{\varepsilon/2}(x-y)\mathbf 1_{B_{a+\frac{\varepsilon}{2}}}(y)dy=\eta_\varepsilon(x).
\end{align*}
Then taking the profile function $\widetilde{\eta}_\varepsilon:r \in [0,\infty) \mapsto \eta_\varepsilon(r\mathbf{e}_1)
\in \R$, we have for every $r>0$,
\begin{equation*}
\widetilde{\eta}_\varepsilon'(r)=\frac{\partial \, \varrho_{\varepsilon/2}}{\partial \, x_1} \ast
\mathbf 1_{B_a+\frac{\varepsilon}{2}}(r\mathbf{e}_1) \le 0,
\end{equation*}
(where $\mathbf{e}_1$ is a unit vector in $\R^d$)
implying that $\eta_\varepsilon$ is radially decreasing. Hence we can define
$$
V_\varepsilon(x)=-v\eta_\varepsilon(x)
$$
so that $V_\varepsilon \to V$ as $\varepsilon \to 0$ in $L^p(\R^d)$ for any $1 \le p < \infty$, and $V_\varepsilon$
satisfies Assumption \ref{ass:existence} for every $\varepsilon>0$. Furthermore, we also have
$V_\varepsilon \in C^\infty_{\rm c}(\R^d)$. With these entries we then define
$$
\cH_{m,\alpha}:=\cH_{\Phi_{m,\alpha},V} \quad \mbox{and} \quad \cH^\varepsilon_{m,\alpha}:=\cH_{\Phi_{m,\alpha},V_\varepsilon}.
$$
To avoid multiple subscripts, in this section we denote the unique ground states and ground state eigenvalues of these
non-local Schr\"odinger operators simply by
$$
\varphi, \;  \varphi_{\varepsilon} \quad \mbox{and} \quad \lambda, \; \lambda_{\varepsilon}, \;\; \mbox{respectively}.
$$


\subsection{Regularity of the ground state}
First we show that $\varphi_{\varepsilon}$ is regular to a high degree. Since the properties discussed just before
introducing Assumption \ref{ass:existence} also hold for the semigroup $\{e^{-t\cH^\varepsilon_{m,\alpha}}: t \geq 0\}$, we have
$\varphi_{\varepsilon} \in L^\infty(\R^d) \cap C(\R^d)$. Also, $\varphi_{\varepsilon} \in L^1(\R^d)$ by \cite[Prop. 4.291]{LHB}.

\begin{proposition}
\label{prop:regular}
We have that $\varphi_{\varepsilon} \in C^\infty(\R^d)\cap L^\infty(\R^d)$.
\end{proposition}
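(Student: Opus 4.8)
The plan is a Fourier-side bootstrap. We already know from the semigroup properties recalled above that $\varphi_\varepsilon \in L^\infty(\R^d) \cap C(\R^d) \cap L^1(\R^d)$, so the only thing left to prove is $\varphi_\varepsilon \in C^\infty(\R^d)$. Two structural facts drive the argument: the potential $V_\varepsilon \in C^\infty_{\rm c}(\R^d)$ is smooth with compact support, so multiplication by it preserves $H^\sigma(\R^d) \cap L^1(\R^d)$ by Lemma \ref{lem:product2}; and $\varphi_\varepsilon \in L^1(\R^d)$, so $\widehat{\varphi_\varepsilon}$ is bounded, which takes care of the low frequencies.

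First I would record the eigenvalue equation on the Fourier side. Since $V_\varepsilon$ is bounded, $\cH^\varepsilon_{m,\alpha} = \Phi_{m,\alpha}(-\Delta) + V_\varepsilon$ is self-adjoint on $\cD(\Phi_{m,\alpha}(-\Delta)) = \{u \in L^2(\R^d): \Phi_{m,\alpha}(|\cdot|^2)\,\widehat u \in L^2(\R^d)\}$, on which $\Phi_{m,\alpha}(-\Delta)$ acts as the Fourier multiplier with symbol $\Phi_{m,\alpha}(|\xi|^2)$, cf. \eqref{moreequiv} and the corollary following it. Setting $g_\varepsilon := (\lambda_\varepsilon - V_\varepsilon)\varphi_\varepsilon$, which lies in $L^1(\R^d) \cap L^2(\R^d)$ because $V_\varepsilon$ is bounded with compact support and $\varphi_\varepsilon \in L^1(\R^d) \cap L^\infty(\R^d)$, the identity $\cH^\varepsilon_{m,\alpha}\varphi_\varepsilon = \lambda_\varepsilon\varphi_\varepsilon$ becomes $\Phi_{m,\alpha}(-\Delta)\varphi_\varepsilon = g_\varepsilon$, that is,
\[
\Phi_{m,\alpha}(|\xi|^2)\,\widehat{\varphi_\varepsilon}(\xi) = \widehat{g_\varepsilon}(\xi) \qquad \text{for a.e. } \xi \in \R^d .
\]

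The core is an inductive step: if $\varphi_\varepsilon \in H^\sigma(\R^d)$ for some $\sigma \geq 0$, then $\varphi_\varepsilon \in H^{\sigma + \alpha}(\R^d)$. Indeed, $\lambda_\varepsilon\varphi_\varepsilon \in H^\sigma(\R^d) \cap L^1(\R^d)$, and since $\varphi_\varepsilon \in H^\sigma(\R^d) \cap L^1(\R^d)$ and $V_\varepsilon \in C^\infty_{\rm c}(\R^d)$, Lemma \ref{lem:product2} gives $V_\varepsilon\varphi_\varepsilon \in H^\sigma(\R^d) \cap L^1(\R^d)$; hence $g_\varepsilon \in H^\sigma(\R^d) \cap L^1(\R^d)$. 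Now I split $\int_{\R^d}|\xi|^{2(\sigma + \alpha)}|\widehat{\varphi_\varepsilon}(\xi)|^2\,d\xi$ into the integrals over $B_1$ and over $\R^d \setminus B_1$. On $B_1$ the bounds $|\xi|^{2(\sigma + \alpha)} \leq 1$ and $|\widehat{\varphi_\varepsilon}(\xi)| \leq \Norm{\varphi_\varepsilon}{1}$ make the integral finite. On $\R^d \setminus B_1$ I insert $\widehat{\varphi_\varepsilon} = \widehat{g_\varepsilon}/\Phi_{m,\alpha}(|\cdot|^2)$ and use the lower bound $\Phi_{m,\alpha}(|\xi|^2) \geq c\,|\xi|^\alpha$ for $|\xi| \geq 1$, which holds for some $c = c(m,\alpha) > 0$ because $\Phi_{m,\alpha}(|\xi|^2)$ is continuous, strictly positive and asymptotic to $|\xi|^\alpha$ as $|\xi| \to \infty$; this bounds the integral by $c^{-2}\int_{\R^d}|\xi|^{2\sigma}|\widehat{g_\varepsilon}(\xi)|^2\,d\xi < \infty$. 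Hence $\varphi_\varepsilon \in H^{\sigma + \alpha}(\R^d)$. Starting from the fact that the ground state lies in the form domain $H^{\Phi_{m,\alpha}}(\R^d) = H^{\alpha/2}(\R^d)$ and iterating, $\varphi_\varepsilon \in H^{\alpha/2 + k\alpha}(\R^d)$ for every $k \in \N$, so $\varphi_\varepsilon \in \bigcap_{s > 0} H^s(\R^d)$.

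Finally, to pass from $\bigcap_{s > 0} H^s(\R^d)$ to $C^\infty(\R^d)$, for any multi-index $\beta$ I pick $s > |\beta| + \frac{d}{2}$; then $\xi^\beta\widehat{\varphi_\varepsilon}(\xi) \in L^1(\R^d)$ by the Cauchy--Schwarz inequality, writing $|\xi^\beta\widehat{\varphi_\varepsilon}(\xi)| \leq |\xi|^{|\beta|}(1 + |\xi|^2)^{-s/2}\cdot(1 + |\xi|^2)^{s/2}|\widehat{\varphi_\varepsilon}(\xi)|$ and noting $|\xi|^{|\beta|}(1 + |\xi|^2)^{-s/2} \in L^2(\R^d)$ since $2(s - |\beta|) > d$. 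Therefore $\partial^\beta\varphi_\varepsilon$ exists and is the inverse Fourier transform of an $L^1$ function, hence bounded and continuous; as $\beta$ is arbitrary, $\varphi_\varepsilon \in C^\infty(\R^d)$, which together with $\varphi_\varepsilon \in L^\infty(\R^d)$ completes the proof. I expect the only genuinely delicate point to be the low-frequency regime, where $1/\Phi_{m,\alpha}(|\xi|^2)$ is singular: this is exactly why the $B_1 / (\R^d \setminus B_1)$ splitting is made, and why the a priori integrability $\varphi_\varepsilon \in L^1(\R^d)$ — preserved along the iteration thanks to Lemma \ref{lem:product2} — is essential. Everything else is routine.
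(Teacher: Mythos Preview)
Your proof is correct and follows essentially the same Fourier-side bootstrap as the paper, using Lemma \ref{lem:product2} together with the $B_1/(\R^d\setminus B_1)$ splitting to climb from $H^\sigma$ to $H^{\sigma+\alpha}$. The only cosmetic difference is that the paper moves the eigenvalue to the symbol side and divides by $\Phi_{m,\alpha}(|\xi|^2)-\lambda_\varepsilon$, which is bounded below by $-\lambda_\varepsilon>0$ everywhere, whereas you keep $\lambda_\varepsilon\varphi_\varepsilon$ on the right and handle the low frequencies by the direct bound $|\widehat{\varphi_\varepsilon}|\le\Norm{\varphi_\varepsilon}{1}$.
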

\begin{proof}
By the above $\varphi_{\varepsilon} \in C(\R^d) \cap L^\infty(\R^d)$. Also, we know by definition that $\varphi_{\varepsilon}
\in H^{\Phi_{m,\alpha}}(\R^d)$ and
\begin{equation*}
\cE_{m,\alpha}(\varphi_{\varepsilon},v)=\lambda \langle \varphi_{\varepsilon},v\rangle
+\langle V\varphi_{\varepsilon},v\rangle, \quad v \in C^\infty_{\rm c}(\R^d).
\end{equation*}
By the continuity of the involved operators, the above equality holds also for $v \in H^{m,\alpha}(\R^d)$. Define
the space
$$
\bar{H}^{m,\alpha}(\R^d) = \big\{u \in L^2(\R^d;\mathbb{C}): [\Re(u)]_{\Phi}^2 +[\Im(u)]_{\Phi}^2<\infty\big\}.
$$
On $\bar{H}^{m,\alpha}(\R^d)$ we define the bilinear form
\begin{equation*}
\bar{\cE}_{m,\alpha}(u,v)
=\frac{1}{2}\int_{\R^d}\int_{\R^d}(u(y)-u(x))(\bar{v}(y)-\bar{v}(x))j_{m,\alpha}(|x-y|)dxdy.
\end{equation*}
Assuming $u \in H^{\Phi}(\R^d)$ and noting that the real and imaginary parts of $v \in \bar{H}^{m,\alpha}(\R^d)$
are such that $\Re(v),\Im(v) \in H^{m,\alpha}(\R^d)$, we have
\begin{align*}
\bar{\cE}_{m,\alpha}(u,v)&=\frac{1}{2}\int_{\R^d}\int_{\R^d}(u(y)-u(x))(\overline{v}(y)-\overline{v}(x))
j_{m,\alpha}(|x-y|)dxdy\\
&=\frac{1}{2}\int_{\R^d}\int_{\R^d}(u(y)-u(x))(\Re(v)(y)-i\Im(v)(y)-\Re(v)(x)+i\Im(v)(x))j_{m,\alpha}(|x-y|)dxdy\\
&=\frac{1}{2}\int_{\R^d}\int_{\R^d}((u(y)-u(x))(\Re(v)(y)-\Re(v)(x))j_{m,\alpha}(|x-y|)\\
&\qquad +i(u(y)-u(x))(\Im(v)(y)-\Im(v)(x))j_{m,\alpha}(|x-y|))dxdy\\
&=\cE_{m,\alpha}(u,\Re(v))-i\cE_{m,\alpha}(u,\Im(v)).
\end{align*}
Choosing $u=\varphi_{\varepsilon}$ gives
	\begin{align*}
		\bar{\cE}_{m,\alpha}(\varphi_{\varepsilon},v)&=\cE_{m,\alpha}(u,\Re(v))-i\cE_{m,\alpha}(u,\Im(v))\\
		&=\lambda \langle \varphi_{\varepsilon},\Re(v)\rangle +\langle V\varphi_{\varepsilon},\Re(v)\rangle-i\lambda
\langle \varphi_{\varepsilon},\Im(v)\rangle -i\langle V\varphi_{\varepsilon},\Im(v)\rangle\\
		&=\lambda \langle \varphi_{\varepsilon},v\rangle_{\mathbb{C}} +\langle V\varphi_{\varepsilon},v\rangle_{\mathbb{C}}.
	\end{align*}
for all $v \in \bar{H}^{m,\alpha}(\R^d)$, in particular, for all $v \in \cS(\R^d;\mathbb{C}) \subset \bar{H}^{m,\alpha}(\R^d)$.
By linearity of the Fourier transform, Proposition \ref{eq:DirichletFourier} holds also for $\bar{\cE}_{m,\alpha}$ and thus by
Plancherel's theorem we have
\begin{equation*}
\int_{\R^d}\Phi_{m,\alpha}(|\xi|^2)\widehat{\varphi}_{\varepsilon}(\xi)\overline{\widehat{v}}(\xi)d\xi
=\lambda\int_{\R^d}\widehat{\varphi_{\varepsilon}}(\xi)\overline{\widehat{v}}(\xi)d\xi
+\int_{\R^d} \cF[V\varphi_{\varepsilon}](\xi)\overline{\widehat{v}}(\xi)d\xi.
\end{equation*}
In particular, since $C_{\rm c}^{\infty}(\R^d) \subset \cS(\R^d;\mathbb{C})$ and $\cF^{-1}(\cS(\R^d;\mathbb{C}))=
\cS(\R^d; \mathbb{C})$, for every function $\psi \in C_{\rm c}^{\infty}(\R^d)$, there exists a function $v \in
\cS(\R^d;\mathbb{C})$ such that $\widehat{v}=\psi$ and then
\begin{equation*}
\int_{\R^d}(\Phi_{m,\alpha}(|\xi|^2)\widehat{\varphi}_{\varepsilon}(\xi)
-\lambda\widehat{\varphi_{\varepsilon}}(\xi)-\cF[V\varphi_{\varepsilon}](\xi))\psi(\xi)d\xi=0.
\end{equation*}
By the fundamental lemma of variational calculus (see, e.g., \cite [Th. 1.24]{D14}), we obtain
\begin{equation}
\label{eq:equalityFourier}
\Phi_{m,\alpha}(|\xi|^2)\widehat{\varphi}_{\varepsilon}(\xi)-\lambda\widehat{\varphi_{\varepsilon}}(\xi)-
\cF[V\varphi_{\varepsilon}](\xi)=0, \quad \mbox{a.e. $\xi \in \R^d$.}
\end{equation}
This relation implies that $\Phi_{m,\alpha}(|\xi|^2)\widehat{\varphi}_{\varepsilon} \in L^2(\R)$ and thus
$\Phi_{m,\alpha}(-\Delta)\varphi_{\varepsilon} \in L^2(\R)$. To prove that $\varphi_{\varepsilon} \in C^\infty(\R^d)$,
we only need to show that for any $n \in \N$ the function $|\xi|^{n\alpha}\widehat{\varphi}_{\varepsilon}(\xi)$
belongs to $L^2(\R)$ (see, for instance, \cite[Exercise 6, Sect. 2.3]{DM72}). To do this, observe that
$\Phi_{m,\alpha}(|\xi|^2)\sim |\xi|^{\alpha}$ as $|\xi| \to \infty$. Combining this asymptotic behaviour with the
fact that $\widehat{\varphi}_{\varepsilon}$ is uniformly continuous and $\Phi_{m,\alpha}(|\xi|^2)
\widehat{\varphi}_{\varepsilon} \in L^2(\R)$, we get that $|\xi|^{\alpha}\widehat{\varphi}_{\varepsilon} \in
L^2(\R)$.
	
Now assume that $|\xi|^{n\alpha}\widehat{\varphi}_{\varepsilon}(\xi)$ belongs to $L^2(\R)$ for some $n \in \N$.
By Lemma \ref{lem:product2}, we also know that $|\xi|^{n\alpha}\cF[V_{\varepsilon}\varphi_{\varepsilon}](\xi) \in
L^2(\R^d)$. Note that $V_{\varepsilon}\varphi_{\varepsilon} \in L^1(\R^d)$ by the H\"older inequality, hence
$\cF[V_{\varepsilon}\varphi_{\varepsilon}](\xi)$ is uniformly continuous. Furthermore, we have
\begin{equation*}
\widehat{\varphi}(\xi)=\frac{\cF[V_\varepsilon \varphi_{\varepsilon}](\xi)}{\Phi_{m,\alpha}(|\xi|^2)-\lambda},
\end{equation*}
where, recalling that $\lambda<0$, we have $\Phi_{m,\alpha}(|\xi|^2)-\lambda>0$. Multiplying both sides by
$|\xi|^{(n+1)\alpha}$, taking the square and integrating, we obtain
\begin{align*}
\int_{\R^d}|\xi|^{2(n+1)\alpha}|\widehat{\varphi}(\xi)|^2d\xi&
=\left(\int_{B_1}+\int_{\R^d\setminus B_1}\right)\frac{|\xi|^{2(n+1)\alpha}|\cF[V_\varepsilon \varphi_{\varepsilon}](\xi)|^2}
{(\Phi_{m,\alpha}(|\xi|^2)-\lambda)^2}d\xi\\
&\le \frac{\Norm{|\cF[V_\varepsilon \varphi_{\varepsilon}]|^2}{L^\infty(B_1)}\omega_d}{-\lambda}
+\frac{1}{C}\int_{\R^d\setminus B_1}|\xi|^{2n\alpha}|\cF[V_\varepsilon \varphi_{\varepsilon}](\xi)|^2d\xi<\infty,
\end{align*}
where $C=\inf_{|\xi|>1}\frac{\Phi(|\xi|^2)-\lambda}{|\xi|^\alpha}>0$.
\end{proof}

\begin{remark}
\label{rmk:continuity}
\rm{	
The argument we used to prove that $\varphi_{\varepsilon} \in C(\R^d)$ applies more generally. Indeed, we can prove
in the same way that if $\Phi \in \cB_0$ and $V \in L^{\infty,0}(\R^d)$ are such that Assumptions \ref{assPGc}(1)
and \ref{ass:existence} are satisfied, the function $p_t^{\Phi}(x)= (2\pi)^{-d}\int_{\R^d}e^{-ix \cdot \xi-t\Phi(|\xi|^2/2)}
d\xi$ is radially symmetric, and $\varphi$ is the ground state of $\cH_{\Phi,V}$, then $\varphi \in C(\R^d) \cap
L^\infty(\R^d)$. This can be further extended to $\Phi$-Kato-decomposable potentials.
}
\end{remark}
Now that we know that $\varphi_{\varepsilon} \in C^\infty(\R^d) \cap L^\infty(\R^d)$, Lemma \ref{lem:existence1}
guarantees that $\Phi_{m,\alpha}(-\Delta)\varphi_{\varepsilon}$ is well defined as in \eqref{eq:Phidelta}. Using
Theorem \ref{thm:stability} we see that $\varphi_{\varepsilon}$ are in a sense approximants of $\varphi$.
\begin{proposition}
\label{appx}
There exists a sequence $\seq \varepsilon$, $\varepsilon_k \downarrow 0$, such that $\varphi_{k}:=\varphi_{\varepsilon_k}
\to \varphi$ both in $L^2(\R^d)$ and almost everywhere, and $\lambda_k:=\lambda_{\varepsilon_k} \to \lambda$ as
$k\to\infty$.
\end{proposition}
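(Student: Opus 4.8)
The plan is to refine the strong resolvent convergence $\cH^{\varepsilon_k}_{m,\alpha}\to\cH_{m,\alpha}$ — which follows from Theorem \ref{thm:stability}, since $V_\varepsilon=-v\eta_\varepsilon$ has $\sup_{\varepsilon>0}\Norm{V_\varepsilon^-}{\infty}=v<\infty$, $V_\varepsilon\to V$ almost everywhere, and $\int_{\R^d}V_\varepsilon u^2\to\int_{\R^d}Vu^2$ for every $u\in H^{\alpha/2}(\R^d)$ by dominated convergence — into the sharper statement of the proposition. Concretely, I would extract the required sequence from the family $(\varphi_\varepsilon)_{\varepsilon>0}$ of ground states by the compactness Proposition \ref{prop:compactemb}, and identify the weak $L^2$-limit with $\varphi$ using the variational characterisation \eqref{eq:minmax} together with the lower semicontinuity of the seminorm $[\,\cdot\,]_{\Phi_{m,\alpha}}$ under weak $L^2$-convergence. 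Fix an arbitrary sequence $\varepsilon_k\downarrow 0$, write $\lambda_k=\lambda_{\varepsilon_k}$, $\varphi_k=\varphi_{\varepsilon_k}$, $\eta_k=\eta_{\varepsilon_k}$, and note that since only the existence of one good sequence is asserted, I may pass to subsequences whenever convenient.

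First I would collect the a priori bounds. Because $\eta_k\ge\mathbf 1_{B_a}$, for every $u$ with $\Norm{u}{2}=1$ we have $[u]_{\Phi_{m,\alpha}}^2-v\int_{\R^d}\eta_k u^2\le[u]_{\Phi_{m,\alpha}}^2-v\int_{\R^d}\mathbf 1_{B_a}u^2$, so \eqref{eq:minmax} gives $\lambda_k\le\lambda$ for all $k$; since also $\eta_k\le 1$ we have $\lambda_k\ge -v$, and $\lambda<0$ because $\Spec_{\rm ess}(\cH_{m,\alpha})=[0,\infty)$ while $\cH_{m,\alpha}$ has a ground state. From $[\varphi_k]_{\Phi_{m,\alpha}}^2-v\int_{\R^d}\eta_k\varphi_k^2=\lambda_k$ and $\Norm{\varphi_k}{2}=1$ one gets $[\varphi_k]_{\Phi_{m,\alpha}}^2=\lambda_k+v\int_{\R^d}\eta_k\varphi_k^2\le v$, so $\Norm{\varphi_k}{\Phi_{m,\alpha}}$ is bounded uniformly in $k$. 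Proposition \ref{prop:compactemb} then lets me pass to a subsequence along which $\varphi_k\rightharpoonup\psi$ weakly in $L^2(\R^d)$, $\varphi_k\to\psi$ in $L^2(K)$ for every compact $K\subset\R^d$ and almost everywhere, with $\psi\in H^{\alpha/2}(\R^d)$, $\psi\ge 0$, $\Norm{\psi}{2}\le 1$; after a further extraction I may also assume $\lambda_k\to\lambda'\in[-v,\lambda]$.

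Next I would identify $\psi$ and $\lambda'$. Since $\eta_k\to\mathbf 1_{B_a}$ almost everywhere, $\eta_k$ is supported in $B_{a+1}$ for $k$ large, and $\varphi_k\to\psi$ strongly in $L^2(B_{a+1})$, one obtains $\int_{\R^d}\eta_k\varphi_k^2\to\int_{B_a}\psi^2$ and, for each $w\in C_{\rm c}^\infty(\R^d)$, $\int_{\R^d}\eta_k\varphi_k w\to\int_{B_a}\psi w$. Passing to the limit in the weak equation $\cE_{m,\alpha}(\varphi_k,w)=\lambda_k\langle\varphi_k,w\rangle+v\langle\eta_k\varphi_k,w\rangle$, $w\in C_{\rm c}^\infty(\R^d)$ — whose left-hand side converges to $\cE_{m,\alpha}(\psi,w)$ by Proposition \ref{eq:DirichletFourier} and Plancherel's theorem, since $\widehat{\varphi_k}\rightharpoonup\widehat\psi$ in $L^2(\R^d;\mathbb C)$ and $\Phi_{m,\alpha}(|\cdot|^2)\widehat w\in L^2(\R^d;\mathbb C)$ — gives that $\psi$ satisfies $\cE_{m,\alpha}(\psi,w)-v\int_{B_a}\psi w=\lambda'\langle\psi,w\rangle$ for all $w\in C_{\rm c}^\infty(\R^d)$, i.e. $\psi$ is a weak eigenfunction of $\cH_{m,\alpha}$ at $\lambda'$, which by self-adjointness is a genuine $L^2$-eigenfunction as soon as $\psi\neq 0$. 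To rule out $\psi=0$ I use $[\varphi_k]_{\Phi_{m,\alpha}}^2\ge 0$: this gives $\int_{B_{a+1}}\varphi_k^2\ge\int_{\R^d}\eta_k\varphi_k^2\ge -\lambda_k/v$, hence in the limit $\int_{B_{a+1}}\psi^2\ge -\lambda'/v>0$ since $\lambda'\le\lambda<0$. Therefore $\lambda'\in\Spec(\cH_{m,\alpha})$, so $\lambda'\ge\lambda=\min\Spec(\cH_{m,\alpha})$; combined with $\lambda'\le\lambda$ this forces $\lambda'=\lambda$, and by simplicity of the ground state $\psi=c\varphi$ with $c=\Norm{\psi}{2}\in(0,1]$.

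Finally I would prove $c=1$, which upgrades the convergence to the strong topology. Letting $k\to\infty$ in $[\varphi_k]_{\Phi_{m,\alpha}}^2=\lambda_k+v\int_{\R^d}\eta_k\varphi_k^2$ gives $[\varphi_k]_{\Phi_{m,\alpha}}^2\to\lambda+v\int_{B_a}\psi^2$, whereas the weak lower semicontinuity of $u\mapsto[u]_{\Phi_{m,\alpha}}^2=\int_{\R^d}\Phi_{m,\alpha}(|\xi|^2)|\widehat u(\xi)|^2d\xi$ yields $[\psi]_{\Phi_{m,\alpha}}^2\le\lambda+v\int_{B_a}\psi^2$; on the other hand, testing the eigenvalue equation for $\psi$ against $\psi$ gives $[\psi]_{\Phi_{m,\alpha}}^2=\lambda c^2+v\int_{B_a}\psi^2$. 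Comparing these and using $\lambda<0$ yields $c^2\ge 1$, hence $c=1$ and $\psi=\varphi$. Then $\Norm{\varphi_k}{2}\to 1=\Norm{\psi}{2}$ together with $\varphi_k\rightharpoonup\psi$ forces $\varphi_k\to\varphi$ strongly in $L^2(\R^d)$; the almost everywhere convergence is part of the subsequence extracted via Proposition \ref{prop:compactemb}, and $\lambda_k\to\lambda$ has been established. The step I expect to be the main obstacle is exactly this last one: excluding that a positive fraction of the $L^2$-mass of $\varphi_k$ escapes to spatial infinity is not delivered by the strong resolvent convergence alone, and it is the weak lower semicontinuity of the Gagliardo-type seminorm, together with the sign $\lambda<0$, that supplies the missing ingredient.
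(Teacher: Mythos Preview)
Your argument is correct, but it takes a substantially different route from the paper's. The paper's proof is a two-line appeal to the abstract machinery already set up: once Theorem \ref{thm:stability} yields $\cH_{m,\alpha}={\rm SR}-\lim_{k\to\infty}\cH^{\varepsilon_k}_{m,\alpha}$, Corollary \ref{cor:stabilityspectrum}(2) (via \cite[Th.~6.38]{T09}) immediately gives $\lambda_k\to\lambda$ and $\varphi_k\to\varphi$ in $L^2(\R^d)$, since all the operators have the same essential spectrum $[0,\infty)$; a further subsequence then gives almost-everywhere convergence. In particular, your concern that strong resolvent convergence alone ``does not deliver'' the exclusion of $L^2$-mass escaping to infinity is unfounded here: that is exactly what the general spectral-stability result packaged in Corollary \ref{cor:stabilityspectrum} provides.

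Your approach instead bypasses this black box entirely and gives a self-contained variational proof: uniform bounds on $\Norm{\varphi_k}{\Phi_{m,\alpha}}$, compactness via Proposition \ref{prop:compactemb}, identification of the weak limit through the eigenvalue equation, and the upgrade to strong convergence by combining weak lower semicontinuity of $[\,\cdot\,]_{\Phi_{m,\alpha}}^2$ with the sign $\lambda<0$. This is longer but more transparent, and it makes explicit where the monotonicity $V_\varepsilon\le V$ enters (yielding $\lambda_k\le\lambda$) and how tightness is recovered. The paper's route is quicker because the heavy lifting was already done in Section~3; yours would be the natural argument had that machinery not been available.
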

\begin{proof}
Let $\varepsilon_k \downarrow 0$ be any subsequence and set $V_k:=V_{\varepsilon_k}$. Since $\sup_{k \ge 0}\Norm{V_k}
{\infty}=v<\infty$, we have $V_k \to V$ in any $L^p(\R^d)$ for $1 \le p <\infty$ and all $V_k$ and $V$ satisfy Assumption
\ref{ass:existence}. Then we obtain the statement by Theorem \ref{thm:stability} and taking a suitable subsequence.
\end{proof}
From now on, we denote by $\varphi_k$ the sequence identified by this result.
\begin{theorem}
\label{thm:regularphi0}
We have $\Phi_{m,\alpha}(-\Delta)\varphi \in L^{2}(\R^d)$. Furthermore, $\Phi_{m,\alpha}(-\Delta)\varphi_k \to
\Phi_{m,\alpha}(-\Delta)\varphi$ in $L^2(\R^d)$, as $k\to\infty$.
\end{theorem}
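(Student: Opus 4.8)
The plan is to reduce both claims to the Fourier-side eigenvalue identity for $\varphi$ and then invoke the convergences supplied by Proposition~\ref{appx}.

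First I would show $\Phi_{m,\alpha}(-\Delta)\varphi\in L^2(\R^d)$ by following the argument in the proof of Proposition~\ref{prop:regular} up to \eqref{eq:equalityFourier}, now with $\varphi$ and the potential $V=-v\mathbf 1_{B_a}$ in place of $\varphi_\varepsilon$ and $V_\varepsilon$. That argument uses only that the potential is bounded with compact support (so that $V\varphi\in L^1(\R^d)\cap L^2(\R^d)$, which holds because $\varphi\in L^\infty(\R^d)$), and it does not need the smoothness of the potential; hence it applies unchanged and yields
\[
\Phi_{m,\alpha}(|\xi|^2)\widehat\varphi(\xi)=\lambda\widehat\varphi(\xi)+\cF[V\varphi](\xi),\qquad\text{a.e. }\xi\in\R^d.
\]
Since $\lambda\widehat\varphi\in L^2(\R^d)$ and $\cF[V\varphi]\in L^2(\R^d)$ by Plancherel, the left-hand side lies in $L^2(\R^d)$, so $\Phi_{m,\alpha}(-\Delta)\varphi\in L^2(\R^d)$ and, in fact, $\Phi_{m,\alpha}(-\Delta)\varphi=\lambda\varphi+V\varphi$ as an identity in $L^2(\R^d)$.

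For the convergence, \eqref{eq:equalityFourier} itself gives the analogous identity $\Phi_{m,\alpha}(-\Delta)\varphi_k=\lambda_k\varphi_k+V_k\varphi_k$ in $L^2(\R^d)$ for each approximant, where $V_k=-v\eta_{\varepsilon_k}$. Subtracting and using the triangle inequality together with the decompositions $\lambda_k\varphi_k-\lambda\varphi=(\lambda_k-\lambda)\varphi_k+\lambda(\varphi_k-\varphi)$ and $V_k\varphi_k-V\varphi=V_k(\varphi_k-\varphi)+(V_k-V)\varphi$, one gets
\[
\Norm{\Phi_{m,\alpha}(-\Delta)\varphi_k-\Phi_{m,\alpha}(-\Delta)\varphi}{2}\le|\lambda_k-\lambda|\,\Norm{\varphi_k}{2}+|\lambda|\,\Norm{\varphi_k-\varphi}{2}+v\,\Norm{\varphi_k-\varphi}{2}+\Norm{\varphi}{\infty}\Norm{V_k-V}{2}.
\]
By Proposition~\ref{appx} we have $\lambda_k\to\lambda$ and $\Norm{\varphi_k-\varphi}{2}\to0$ (and $\Norm{\varphi_k}{2}=1$), while $\Norm{V_k-V}{2}\to0$ because $V_\varepsilon\to V$ in $L^2(\R^d)$ and $\varphi\in L^\infty(\R^d)$; hence the right-hand side tends to $0$.

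The computations here are routine; the only point deserving care is the bookkeeping one of making sure that the object $\Phi_{m,\alpha}(-\Delta)\varphi$ in the statement --- interpreted as the Fourier multiplier $\cF^{-1}[\Phi_{m,\alpha}(|\cdot|^2)\widehat\varphi]$, which a priori is only a tempered distribution --- coincides, for the smooth bounded approximants $\varphi_k$, with the pointwise expression \eqref{eq:Phidelta} used in Lemma~\ref{lem:existence1}, so that the two sides of the asserted limit are genuinely the same functions; this is ensured by \eqref{moreequiv} together with the smoothness and integrability of $\varphi_k$ established in Proposition~\ref{prop:regular}. Beyond that there is no real obstacle: the theorem amounts to the passage from the form domain to the operator domain (an elliptic-regularity statement) and the stability of the right-hand side $\lambda\varphi+V\varphi$ under the approximation.
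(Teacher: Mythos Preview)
Your proposal is correct and follows essentially the same route as the paper: both first rerun the Fourier-side argument of Proposition~\ref{prop:regular} (up to \eqref{eq:equalityFourier}) with $(\varphi,V)$ in place of $(\varphi_\varepsilon,V_\varepsilon)$ to get $\Phi_{m,\alpha}(|\cdot|^2)\widehat\varphi\in L^2$, and then use the eigenvalue identities together with $\lambda_k\to\lambda$, $\varphi_k\to\varphi$ in $L^2$, $\|V_k\|_\infty\le v$, $\varphi\in L^\infty$, and $V_k\to V$ in $L^2$ to conclude $V_k\varphi_k\to V\varphi$ (hence $\Phi_{m,\alpha}(-\Delta)\varphi_k\to\Phi_{m,\alpha}(-\Delta)\varphi$) in $L^2$. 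Your version is slightly more explicit about the $\lambda$-terms and the interpretation of $\Phi_{m,\alpha}(-\Delta)\varphi$, but the substance is the same.
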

\begin{proof}
By a similar argument as in Proposition \ref{prop:regular}, we see that
\begin{equation*}
\Phi_{m,\alpha}(|\xi|^2)\widehat{\varphi}(\xi)=\lambda\widehat{\varphi}(\xi)+\cF[V\varphi](\xi),
\end{equation*}
for almost every $\xi \in \R^d$, hence $\Phi_{m,\alpha}(|\xi|^2)\widehat{\varphi}(\xi) \in L^2(\R^d)$, implying
that $\Phi_{m,\alpha}(-\Delta)\varphi \in L^2(\R^d)$. Note that
\begin{align*}
\int_{\R^d}&|V_k(x)\varphi_{k}(x)-V(x)\varphi(x)|^2dx
\le 2\int_{\R^d}|V_k(x)|^2 |\varphi_{k}(x)-\varphi(x)|^2dx\\
&\qquad +2\int_{\R^d}|V(x)-V_k(x)|^2|\varphi(x)|^2dx\\
&\le 2v^2\Norm{\varphi_{k}-\varphi}{2}^2 + 2\Norm{\varphi}{\infty}^2\Norm{V_k-V}{2} \to 0,
\end{align*}
where we used the fact that $\varphi \in C(\R^d)\cap L^\infty(\R^d)$, as specified in Remark \ref{rmk:continuity}.
Hence $V_k\varphi_k\to V\varphi$ in $L^2(\R^d)$. Clearly, this implies $\cF[V_k\varphi_k] \to \cF[V\varphi]$
in $L^2(\R^d)$, and thus we get $\Phi_{m,\alpha}(|\cdot|^2) \widehat{\varphi}_k \to \Phi_{m,\alpha}(|\cdot|^2)
\widehat{\varphi}$, which in turn implies that $\Phi_{m,\alpha}(-\Delta)\varphi_k \to \Phi_{m,\alpha}(-\Delta)\varphi$
in $L^2(\R^d)$.
\end{proof}

\subsection{Radial decrease of the ground state}

The fact that the ground states are unique and the potentials are rotationally symmetric give us some information about
the shape of $\varphi$.
\begin{proposition}
\label{rotsym}
Let $\Phi \in \cB_0$ and $V \in L^{\infty,0}(\R^d)$ be rotationally symmetric and satisfy Assumption \ref{ass:existence}.
Then the ground state $\varphi$ of $\cH_{\Phi,V}$ is rotationally symmetric.
\end{proposition}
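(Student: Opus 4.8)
The plan is to combine the rotational symmetry of the operator with the simplicity of the ground state. For $R \in {\mathrm{SO}}(d)$ let $U_R$ be the unitary operator on $L^2(\R^d)$ defined by $(U_R f)(x) = f(R^{-1}x)$; since $R$ is a Euclidean isometry, $U_R$ is unitary, $\Norm{U_R f}{2} = \Norm{f}{2}$, it maps strictly positive functions to strictly positive functions, and it maps $C_{\rm c}^\infty(\R^d)$ onto itself.

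First I would verify that $\cH_{\Phi,V}$ commutes with every $U_R$, which is most transparent at the level of the form $\cA_{\Phi,V}$. For the kinetic part, the substitution $x \mapsto Rx$, $y \mapsto Ry$ in $\cE_\Phi(u,v) = \frac{1}{2}\int_{\R^d}\int_{\R^d}(u(y)-u(x))(v(y)-v(x))j_\Phi(|x-y|)\,dx\,dy$ leaves the integral unchanged, because $R$ preserves $|x-y|$ and $j_\Phi$ depends only on $|x-y|$ (equivalently, $\Phi(-\Delta)$ is a radial Fourier multiplier by \eqref{moreequiv}); hence $[U_R u]_\Phi = [u]_\Phi$, so $U_R$ preserves $H^\Phi(\R^d)$ and $\cE_\Phi(U_R u, U_R v) = \cE_\Phi(u,v)$. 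For the potential part, $\langle V U_R u, U_R v\rangle = \int_{\R^d} V(Rx)u(x)v(x)\,dx = \int_{\R^d}V(x)u(x)v(x)\,dx$ by the rotational symmetry of $V$; in particular $\int_{\R^d} V(U_R u)^2 = \int_{\R^d} V u^2$, so $\cD(\cA_{\Phi,V})$ is $U_R$-invariant and $\cA_{\Phi,V}(U_R u, U_R v) = \cA_{\Phi,V}(u,v)$ for all $u,v \in \cD(\cA_{\Phi,V})$.

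Now let $\lambda_0$ be the ground state eigenvalue and $\varphi$ its strictly positive, $L^2$-normalized eigenfunction. Using the invariance just established, for every $w \in C_{\rm c}^\infty(\R^d)$ we have $\cA_{\Phi,V}(U_R\varphi, w) = \cA_{\Phi,V}(\varphi, U_{R^{-1}}w) = \lambda_0\langle\varphi, U_{R^{-1}}w\rangle = \lambda_0\langle U_R\varphi, w\rangle$, so $U_R\varphi$ is again an eigenfunction of $\cH_{\Phi,V}$ at $\lambda_0$; moreover it is strictly positive with $\Norm{U_R\varphi}{2}=1$. By Assumption \ref{ass:existence} (simplicity of $\lambda_0$, via Perron--Frobenius), the corresponding eigenspace is one-dimensional, hence $U_R\varphi = c_R\varphi$ for some $c_R \in \R$; taking $L^2$ norms gives $|c_R| = 1$ and strict positivity gives $c_R > 0$, so $c_R = 1$. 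Therefore $\varphi(R^{-1}x) = \varphi(x)$ for all $x \in \R^d$ and all $R \in {\mathrm{SO}}(d)$, i.e., $\varphi$ is rotationally symmetric.

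The only point requiring a little care is the bookkeeping showing that $U_R$ preserves the form domain and that $U_R\varphi$ genuinely solves the weak eigenvalue equation; once that is in place, the simplicity of the ground state does the rest, and the remaining manipulations are routine changes of variables.
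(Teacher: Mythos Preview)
Your proof is correct and follows essentially the same approach as the paper: rotational invariance of $\cA_{\Phi,V}$ together with simplicity of the ground state forces $\varphi \circ R = \varphi$. The only cosmetic difference is that the paper computes $\cA_{\Phi,V}(\widetilde\varphi,\widetilde\varphi)=\lambda$ directly and invokes the variational characterization \eqref{eq:minmax} to conclude $\widetilde\varphi$ is a ground state, whereas you verify the weak eigenvalue equation for $U_R\varphi$ by passing $U_{R^{-1}}$ onto the test function; both routes arrive at the same place.
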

\begin{proof}
Consider an arbitrary rotation ${\rm R} \in \rm{SO}(d)$ and let $\widetilde{\varphi}(x)=\varphi({\rm R}x)$.
Using that $\Norm{\widetilde{\varphi}}{2}=1$ and $\widetilde{\varphi}(x)> 0$ for all $x \in \R^d$, we obtain
\begin{align*}
\cA_{\Phi,V}(\widetilde{\varphi},\widetilde{\varphi})&=\int_{\R^d}\int_{\R^d}|\widetilde{\varphi}(x)-
\widetilde{\varphi}(y)|j_{\Phi}(|x-y|)dxdy+\int_{\R^d}V_k(x)\widetilde{\varphi}^2(x)dx\\
&=\int_{\R^d}\int_{\R^d}|\varphi({\rm R}x)-\varphi({\rm R}y)|j_{\Phi}(|x-y|)dxdy+\int_{\R^d}V_k(x)
\varphi({\rm R}x)dx\\
&=\int_{\R^d}\int_{\R^d}|\varphi({\rm R}x)-\varphi({\rm R}y)|j_{\Phi}(|{\rm R}x-{\rm R}y|)dxdy
+\int_{\R^d}V_k({\rm R}x)\varphi({\rm R}x)dx\\
&=\int_{\R^d}\int_{\R^d}|\varphi(x)-\varphi(y)|j_{\Phi}(|x-y|)dxdy+\int_{\R^d}V_k(x)\varphi(x)dx
=\lambda.
\end{align*}
Hence $\widetilde{\varphi}$ is an eigenfunction of $\cH_{\Phi,V}$ at the same eigenvalue $\lambda$. By
uniqueness we then have $\widetilde{\varphi}=\varphi$, thus $\varphi$ is invariant to all rotations.
\end{proof}

In the following we will need to work with antisymmetric functions. We will denote any $x \in \R^d$ as $x=(x_1,x')$,
where $x_1 \in \R$ and $x' \in \R^{d-1}$. For every $\mu \in \R$ we write $x^\mu=(2\mu-x_1,x')$ and $\cU_\mu =
\{x \in \R^d: \ x_1<\mu\}$. We say that a function $w$ is $\mu$-antisymmetric if  $w(x^\mu)=-w(x)$ for every $x
\in \cU_\mu$. Recall the following result from \cite[Lem. 2.1]{CHL17} for the fractional Laplacian.
\begin{lemma}
\label{lem:fraclapcont}
Let $w \in C^2(\R^d) \cap L^\infty(\R^d)$ be a $\mu$-antisymmetric function, and suppose that there exists
$x \in \cU_\mu$ such that $w(x)=\inf_{y \in \cU_\mu}w(y)<0$. Denote $\delta=\mu-x_1$. Then there exists a constant
$C = C_{d,\alpha} > 0$ dependent only on $d$ and $\alpha$ such that
\begin{equation}\label{eq:fraclapcont}
\Phi_{0,\alpha}(-\Delta)w(x)
\le
C_{d,\alpha}\left(\delta^{-\alpha}w(x)-\delta \int_{\cU_\mu} \frac{(w(y)-w(x))(\mu-y_1)}{|x-y^\mu|^{d+\alpha+2}}dy\right).
\end{equation}
\end{lemma}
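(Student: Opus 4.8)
The plan is to exploit the $\mu$-antisymmetry of $w$ in order to fold the singular integral
\[
\Phi_{0,\alpha}(-\Delta)w(x)=(-\Delta)^{\alpha/2}w(x)=c_{d,\alpha}\int_{\R^d}\frac{w(x)-w(y)}{|x-y|^{d+\alpha}}\,dy,
\qquad j_{0,\alpha}(r)=c_{d,\alpha}\,r^{-d-\alpha},
\]
onto the half-space $\cU_\mu$. The first point I would record is that, since $\cU_\mu$ is open and $x\in\cU_\mu$ is an interior minimum point of $w$, necessarily $\nabla w(x)=0$; combined with $w\in C^2(\R^d)\cap L^\infty(\R^d)$ and Lemma \ref{lem:propj}(3), this gives $|w(x)-w(y)|\,j_{0,\alpha}(|x-y|)\le C|x-y|^{2-d-\alpha}$ near $x$ and $\le 2\Norm{w}{\infty}\,j_{0,\alpha}(|x-y|)$ away from it, so the integral is absolutely convergent and the principal value is superfluous. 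This removes all the delicate $\alpha\ge1$ technicalities at once.

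Next I would split $\int_{\R^d}=\int_{\cU_\mu}+\int_{\cU_\mu^\ast}$, where $\cU_\mu^\ast:=\{y_1>\mu\}=(\cU_\mu)^\mu$ (the hyperplane is null), and in the integral over $\cU_\mu^\ast$ substitute $y\mapsto y^\mu$, a measure-preserving bijection onto $\cU_\mu$ that keeps $|x-y^\mu|$ fixed and sends $w$ to $-w$ by antisymmetry. Using the elementary identity
\begin{align*}
\frac{w(x)-w(y)}{|x-y|^{d+\alpha}}&+\frac{w(x)+w(y)}{|x-y^\mu|^{d+\alpha}}\\
&=(w(x)-w(y))\Bigl(\frac{1}{|x-y|^{d+\alpha}}-\frac{1}{|x-y^\mu|^{d+\alpha}}\Bigr)+\frac{2w(x)}{|x-y^\mu|^{d+\alpha}},
\end{align*}
one obtains
\begin{align*}
\Phi_{0,\alpha}(-\Delta)w(x)&=c_{d,\alpha}\int_{\cU_\mu}(w(x)-w(y))\Bigl(\frac{1}{|x-y|^{d+\alpha}}-\frac{1}{|x-y^\mu|^{d+\alpha}}\Bigr)dy\\
&\qquad+2c_{d,\alpha}\,w(x)\int_{\cU_\mu}\frac{dy}{|x-y^\mu|^{d+\alpha}}.
\end{align*}

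The heart of the argument is to estimate these two pieces. In the first one $w(x)-w(y)\le0$, because $x$ minimises $w$ on $\cU_\mu$, so I need a \emph{lower} bound on the kernel difference. With $\delta=\mu-x_1>0$ and $\beta=\mu-y_1>0$ one has the key identity $|x-y^\mu|^2=|x-y|^2+4\delta\beta$, hence applying the mean value theorem to $r\mapsto r^{-(d+\alpha)/2}$ on $[\,|x-y|^2,|x-y^\mu|^2\,]$ and using that $r\mapsto r^{-(d+\alpha)/2-1}$ is decreasing yields
\[
\frac{1}{|x-y|^{d+\alpha}}-\frac{1}{|x-y^\mu|^{d+\alpha}}\ \ge\ 2(d+\alpha)\,\frac{\delta(\mu-y_1)}{|x-y^\mu|^{d+\alpha+2}}\ \ge\ 0 .
\]
Multiplying by the nonpositive factor $w(x)-w(y)$ and integrating bounds the first piece above by $-2(d+\alpha)c_{d,\alpha}\,\delta\int_{\cU_\mu}\frac{(w(y)-w(x))(\mu-y_1)}{|x-y^\mu|^{d+\alpha+2}}\,dy$. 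For the second piece, substituting $z=y^\mu$ and rescaling $z'\mapsto\tau z'$ on each slice $\{z_1=\tau\}$ gives $\int_{\cU_\mu}\frac{dy}{|x-y^\mu|^{d+\alpha}}=\int_{\{z_1>\delta\}}|z|^{-d-\alpha}\,dz=\kappa_{d,\alpha}\,\delta^{-\alpha}$ with $\kappa_{d,\alpha}=\alpha^{-1}\int_{\R^{d-1}}(1+|u|^2)^{-(d+\alpha)/2}\,du>0$; since $w(x)<0$ this piece equals $2\kappa_{d,\alpha}c_{d,\alpha}\,\delta^{-\alpha}w(x)\le0$. As the two resulting terms are both nonpositive, I can collapse their constants into a single $C_{d,\alpha}>0$ and read off \eqref{eq:fraclapcont}.

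I expect the only real obstacle to be bookkeeping: keeping every inequality pointed the right way — the kernel difference must be bounded from \emph{below} because it multiplies a nonpositive factor, while the $\delta^{-\alpha}$ term must be treated remembering $w(x)<0$ — and making the $\delta$-dependence explicit through $|x-y^\mu|^2-|x-y|^2=4\delta(\mu-y_1)$ together with the one-variable mean value estimate. Once the interior-critical-point remark $\nabla w(x)=0$ is in place there is no analytic difficulty left, and the same scheme is precisely what will later be adapted to the relativistic kernel $j_{m,\alpha}$.
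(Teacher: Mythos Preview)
Your proof is correct. The paper does not actually prove this lemma but cites it from \cite{CHL17}; however, your argument mirrors exactly the scheme the paper uses to prove the relativistic extension (Lemma~\ref{lem:rellapcont}): fold the integral onto $\cU_\mu$ via $y\mapsto y^\mu$, estimate the resulting kernel difference by the mean value theorem, and compute the remaining half-space integral by scaling to extract the $\delta^{-\alpha}$ factor. Two minor variations worth noting: you apply the mean value theorem to $r\mapsto r^{-(d+\alpha)/2}$ on the \emph{squared} distances, exploiting the clean identity $|x-y^\mu|^2-|x-y|^2=4\delta(\mu-y_1)$, whereas the paper's proof of Lemma~\ref{lem:rellapcont} applies Lagrange's theorem to $j_{m,\alpha}(r)$ on the distances themselves; and your observation that $\nabla w(x)=0$ at the interior minimum lets you dispense with the $\varepsilon$-principal-value bookkeeping the paper carries through Step~1. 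Both shortcuts are sound and make the argument slightly cleaner.
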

We will extend this result to the operator $\Phi_{m,\alpha}(-\Delta)$. To do this, for every $m, r >0$ we define
\begin{align*}
\label{sigma}
\begin{split}
\sigma_{m,\alpha}(r)
&=
\frac{\alpha 2^{1-\frac{d-\alpha}{2}}}{\Gamma\left(1-\frac{\alpha}{2}\right)
\pi^{d/2}} \left(\frac{2^{\frac{d+\alpha}{2}-1}\Gamma\left(\frac{d+\alpha}{2}\right)}{r^{d+\alpha}}-
\frac{m^{\frac{d+\alpha}{2\alpha}}K_{\frac{d+\alpha}{2}}\left(m^{1/\alpha}r\right)}{r^{\frac{d+\alpha}{2}}}\right)\\
&=
\frac{\alpha 2^{1-\frac{d-\alpha}{2}}}{\Gamma\left(1-\frac{\alpha}{2}\right)\pi^{d/2}}{\frac{1}{r^{d+\alpha}}}
\int_0^{m^{1/\alpha} r}w^{\frac{d+\alpha}{2}}K_{\frac{d+\alpha}{2}-1}(w)dw,
\end{split}
\end{align*}
with the same Bessel function as used before. It has been shown in \cite[Lem. 2]{R02} that $\int_{\R^d}\sigma_{m,\alpha}(|x|)dx
=m$ and the decomposition
\begin{equation}
\label{dec}
j_{0,\alpha}(r)=j_{m,\alpha}(r)+\sigma_{m,\alpha}(r)
\end{equation}
holds. Due to this observation, we can define the operator $G_{m,\alpha}: L^\infty(\R^d) \to L^{\infty}(\R^d)$ given by
\begin{equation*}
G_{m,\alpha}f(x)=\frac{1}{2}\int_{\R^d}(f(x+h)-2f(x)+f(x-h))\sigma_{m,\alpha}(|h|)dh.
\end{equation*}
In particular, for every $f \in C_{\rm c}^\infty(\R^d)$ we have
\begin{equation}
\label{withG}
\Phi_{m,\alpha}(-\Delta)f=\Phi_{0,\alpha}(-\Delta)f-G_{m,\alpha}f.
\end{equation}
Clearly, this relation can be extended to any function $f$ such that both $\Phi_{m,\alpha}(-\Delta)f$ and
$\Phi_{0,\alpha}(-\Delta)f$ are defined pointwise via \eqref{eq:Phidelta}. For the details and proof of such a
decomposition, we refer to \cite[Sect. 2.3.2]{AL}.

Before proceeding with the extension, we determine the derivative of the jump kernel $j_{m,\alpha}$.
\begin{lemma}
\label{lem:nuprime}
We have
\begin{equation*}
j_{m,\alpha}'(r)=-\frac{\alpha 2^{\frac{\alpha-d}{2}}m^{\frac{d+\alpha+2}{2\alpha}}}
{\pi^{d/2}\Gamma\left(1-\frac{\alpha}{2}\right)}
\frac{K_{(d+\alpha +2)/2}\left(m^{1/\alpha}r\right)}{r^{\frac{d+\alpha}{2}}}, \quad r>0.
\end{equation*}
\end{lemma}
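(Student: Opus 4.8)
The plan is to reduce the computation to the classical differentiation formula for the modified Bessel function of the third kind,
\[
\frac{d}{dz}\bigl(z^{-\xi}K_\xi(z)\bigr) = -z^{-\xi}K_{\xi+1}(z), \qquad \xi > -\tfrac12,
\]
applied with $\xi = \frac{d+\alpha}{2}$ and composed with the scaling $z = m^{1/\alpha}r$.

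First I would establish this identity directly from the integral representation of $K_\xi$ recalled above. Writing $f_\xi(z) := z^{-\xi}K_\xi(z)$, that representation gives $f_\xi(z) = 2^{-\xi-1}\int_0^\infty t^{-\xi-1}e^{-t-z^2/(4t)}\,dt$. Differentiating under the integral sign produces
\[
f_\xi'(z) = -\frac{z}{2}\,2^{-\xi-1}\int_0^\infty t^{-\xi-2}e^{-t-z^2/(4t)}\,dt,
\]
and recognizing the remaining integral as $2^{\xi+2}z^{-\xi-1}K_{\xi+1}(z)$ --- again by the same representation, now with $\xi$ replaced by $\xi+1$, which is legitimate since $\xi+1>-\tfrac12$ --- the constants collapse to $f_\xi'(z) = -z^{-\xi}K_{\xi+1}(z)$.

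Then I would assemble the result. Put $\nu = \frac{d+\alpha}{2}$ and $A = \frac{\alpha\,2^{(\alpha-d)/2}\,m^{(d+\alpha)/(2\alpha)}}{\pi^{d/2}\,\Gamma(1-\alpha/2)}$, so that, using $r^{-\nu} = m^{\nu/\alpha}(m^{1/\alpha}r)^{-\nu}$,
\[
j_{m,\alpha}(r) = A\,r^{-\nu}K_\nu(m^{1/\alpha}r) = A\,m^{\nu/\alpha}\,f_\nu(m^{1/\alpha}r).
\]
The chain rule together with the identity above gives
\[
j_{m,\alpha}'(r) = A\,m^{(\nu+1)/\alpha}\,f_\nu'(m^{1/\alpha}r) = -A\,m^{(\nu+1)/\alpha}(m^{1/\alpha}r)^{-\nu}K_{\nu+1}(m^{1/\alpha}r) = -A\,m^{1/\alpha}\,r^{-\nu}K_{\nu+1}(m^{1/\alpha}r),
\]
and since $A\,m^{1/\alpha} = \frac{\alpha\,2^{(\alpha-d)/2}\,m^{(d+\alpha+2)/(2\alpha)}}{\pi^{d/2}\,\Gamma(1-\alpha/2)}$ and $\nu+1 = \frac{d+\alpha+2}{2}$, this is exactly the asserted formula.

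The only step needing justification is the interchange of differentiation and integration, which is the mild technical point here but presents no real obstacle: for $z$ in a compact subinterval of $(0,\infty)$ the integrand $t^{-\xi-1}e^{-t-z^2/(4t)}$ and its $z$-derivative $-\frac{z}{2t}t^{-\xi-1}e^{-t-z^2/(4t)}$ are dominated, uniformly in $z$, by a fixed integrable function of $t$ --- the factor $e^{-z^2/(4t)}$ absorbs the singularity at $t\to 0^+$ and $e^{-t}$ handles $t\to\infty$ --- so dominated convergence applies. Alternatively, one may simply quote the standard recurrence for $K_\xi$ (see, e.g., \cite{SSV}) and skip this verification.
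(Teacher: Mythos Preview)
Your proof is correct. Both your argument and the paper's reduce to standard Bessel function identities, but the route differs slightly: the paper applies the product rule to $r^{-\nu}K_\nu(m^{1/\alpha}r)$ together with the derivative formula $K_\xi' = -\tfrac12(K_{\xi+1}+K_{\xi-1})$, obtaining three terms, and then invokes the three-term recurrence $2\xi K_\xi(r)+rK_{\xi-1}(r)=rK_{\xi+1}(r)$ to collapse them into one. You instead use the single combined identity $\frac{d}{dz}\bigl(z^{-\xi}K_\xi(z)\bigr)=-z^{-\xi}K_{\xi+1}(z)$ --- which is exactly what the paper's two formulas together amount to --- and apply it directly after absorbing the power of $r$ into the argument via scaling. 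Your route is marginally cleaner (no intermediate simplification step), and deriving the identity from the integral representation already recalled in the paper makes the argument self-contained rather than relying on two external Watson references. The two approaches are equivalent in substance.
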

\begin{proof}
Using the formula
\begin{equation*}
\frac{d\, }{d\, r}K_{\xi}(r)=-\frac{K_{\xi+1}(r)+K_{\xi-1}(r)}{2},
\end{equation*}
see \cite[\S 3.71(4)]{Wat}, we obtain
\begin{align}
\label{eq:jprime}
		\begin{split}
			j_{m,\alpha}'(r)
			&=
			-\frac{\alpha 2^{\frac{\alpha-d}{2}}m^{\frac{d+\alpha}{2\alpha}}}{2\pi^{d/2}\Gamma\left(1-\frac{\alpha}{2}\right)}
			r^{-\frac{d+\alpha+2}{2}}\left((d+\alpha)K_{(d+\alpha)/2}\big(m^{1/\alpha}r\big)\right.\\
			&\left. \qquad
			+m^{1/\alpha}rK_{(d+\alpha-2)/2}\big(m^{1/\alpha}r\big)+m^{1/\alpha}rK_{(d+\alpha +2)/2}
			\big(m^{1/\alpha}r\big)\right).
		\end{split}
	\end{align}
Since $2\xi K_\xi(r)+rK_{\xi-1}(r)=rK_{\xi+1}(r)$, see \cite[\S 3.71(1)]{Wat}, a combination with \eqref{eq:jprime} shows
the claim.
\end{proof}
\begin{lemma}
\label{lem:rellapcont}
Let $m>0$, $w \in C^2(\R^d)\cap L^\infty(\R^d)$ be a $\mu$-antisymmetric function, and suppose that there exists $x \in \cU_\mu$
such that
\begin{equation*}
w(x)=\inf_{y \in \cU_\mu}w(y)<0.
\end{equation*}
Denote $\delta=\mu-x_1$. Then there exists a constant $C = C(m,\alpha,d) > 0$ such that
\begin{equation}
\label{eq:rellapcont1}
\Phi_{m,\alpha}(-\Delta)w(x)\le C\left((\delta^{-\alpha}-m)w(x)-\delta \int_{\cU_\mu}\frac{(w(y)-w(x))(\lambda-y_1)
K_{(d+\alpha +2)/2}(m^{1/\alpha}|x-y^\mu|)}{|x-y^\mu|^\frac{d+\alpha+2}{2}}dy\right).
\end{equation}
Moreover, if $\delta \in (0,\delta_1]$, with $0<\delta_1<\infty$, then there exists $C = C(m,\alpha,d,\delta_1) >0$
such that	
\begin{equation}
\label{eq:rellapcont2}
\Phi_{m,\alpha}(-\Delta)w(x)\le C\left(\delta^{\frac{d-\alpha}{2}}w(x)
-\delta \int_{\cU_\mu}\frac{(w(y)-w(x))(\lambda-y_1)
K_{(d+\alpha +2)/2}(m^{1/\alpha}|x-y^\mu|)}{|x-y^\mu|^\frac{d+\alpha+2}{2}}dy\right).
\end{equation}
\end{lemma}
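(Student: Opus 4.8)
The plan is to carry the reflection (moving--planes) computation behind Lemma~\ref{lem:fraclapcont} through with the kernel $j_{m,\alpha}$ in place of $j_{0,\alpha}$, using Lemma~\ref{lem:nuprime} for its derivative and the decomposition \eqref{dec} for the leading term. First note that, since $\cU_\mu$ is open and the infimum of $w$ over $\cU_\mu$ is attained at $x$, the point $x$ is an interior minimum, so $\nabla w(x)=0$ and $w(y)-w(x)=O(|y-x|^2)$ near $x$; together with $w\in C^2\cap L^\infty$ and Lemma~\ref{lem:propj} this makes the expression \eqref{eq:Phidelta} (equivalently $\int_{\R^d}(w(x)-w(y))j_{m,\alpha}(|x-y|)\,dy$) absolutely convergent. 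Splitting the integral over $\R^d$ into its parts over $\cU_\mu$ and over $\{y:y_1>\mu\}$, substituting $y\mapsto y^\mu$ in the latter and using $w(y^\mu)=-w(y)$, one obtains
\begin{equation*}
\Phi_{m,\alpha}(-\Delta)w(x)=2w(x)\int_{\cU_\mu}j_{m,\alpha}(|x-y^\mu|)\,dy+\int_{\cU_\mu}(w(y)-w(x))\bigl(j_{m,\alpha}(|x-y^\mu|)-j_{m,\alpha}(|x-y|)\bigr)\,dy=:T_1+T_2,
\end{equation*}
both integrals being finite. Throughout I will use the elementary identity $|x-y^\mu|^2=|x-y|^2+4\delta(\mu-y_1)$ for $x,y\in\cU_\mu$, which gives $|x-y|\le|x-y^\mu|$ and $\mu-y_1\le|x-y^\mu|$.

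For $T_2$ I would argue as in \cite{CHL17}: since $w(y)-w(x)\ge0$ and $j_{m,\alpha}$ is non-increasing (Lemma~\ref{lem:propj}), $T_2\le0$, and by the mean value theorem
\begin{equation*}
j_{m,\alpha}(|x-y^\mu|)-j_{m,\alpha}(|x-y|)=j_{m,\alpha}'(\rho)\,\frac{4\delta(\mu-y_1)}{|x-y|+|x-y^\mu|},\qquad \rho\in\bigl(|x-y|,|x-y^\mu|\bigr).
\end{equation*}
By Lemma~\ref{lem:nuprime}, $|j_{m,\alpha}'|$ is a product of two positive non-increasing factors, hence non-increasing, so $j_{m,\alpha}'(\rho)\le j_{m,\alpha}'(|x-y^\mu|)<0$; combining this with $|x-y|+|x-y^\mu|\le2|x-y^\mu|$ and the explicit formula for $j_{m,\alpha}'$ gives
\begin{equation*}
j_{m,\alpha}(|x-y^\mu|)-j_{m,\alpha}(|x-y|)\le-C_2\,\delta(\mu-y_1)\,\frac{K_{(d+\alpha+2)/2}(m^{1/\alpha}|x-y^\mu|)}{|x-y^\mu|^{(d+\alpha+2)/2}}
\end{equation*}
with $C_2=C_2(m,\alpha,d)>0$. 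Multiplying by $w(y)-w(x)\ge0$ and integrating over $\cU_\mu$ yields $T_2\le-C_2\,\delta\,\mathcal I$, where $\mathcal I$ denotes the integral on the right-hand sides of \eqref{eq:rellapcont1}--\eqref{eq:rellapcont2}; it is finite because $|x-y^\mu|\ge\delta$ removes the singularity at $y=x$, $\mu-y_1\le|x-y^\mu|$, and $K_\nu$ decays exponentially.

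The substantial step is the estimate of $T_1=2w(x)\int_{\cU_\mu}j_{m,\alpha}(|x-y^\mu|)\,dy$; since $w(x)<0$ I need a lower bound on $\int_{\cU_\mu}j_{m,\alpha}(|x-y^\mu|)\,dy=\int_{\{z_1>\mu\}}j_{m,\alpha}(|x-z|)\,dz$. From \eqref{dec} one has $\int_{\{z_1>\mu\}}j_{0,\alpha}(|x-z|)\,dz=c_{d,\alpha}\delta^{-\alpha}$ by scaling, while $\{z_1>\mu\}$ lies in the open half-space $\{z_1>x_1\}$, over which the radial density $\sigma_{m,\alpha}(|x-\cdot|)$ carries at most half its total mass, so $\int_{\{z_1>\mu\}}\sigma_{m,\alpha}(|x-z|)\,dz\le\tfrac12\|\sigma_{m,\alpha}\|_{1}=\tfrac m2$ (using $\|\sigma_{m,\alpha}\|_1=m$, \cite[Lem. 2]{R02}); hence $\int_{\{z_1>\mu\}}j_{m,\alpha}(|x-z|)\,dz\ge c_{d,\alpha}\delta^{-\alpha}-\tfrac m2$, which dominates $c(\delta^{-\alpha}-m)$ once $\delta$ is small enough. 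For the remaining bounded range of $\delta$ (where $\delta^{-\alpha}-m$ is bounded above) I would use instead that $B_\delta(x^\mu)\subset\{z_1>\mu\}$ and $|x-z|\le3\delta$ on it, so $\int_{\{z_1>\mu\}}j_{m,\alpha}(|x-z|)\,dz\ge\omega_d\delta^d j_{m,\alpha}(3\delta)$, a continuous, strictly positive function of $\delta$; on a compact $\delta$-interval this is bounded below, giving $\int_{\cU_\mu}j_{m,\alpha}(|x-y^\mu|)\,dy\ge c(\delta^{-\alpha}-m)$ for all $\delta>0$, with $c=c(m,\alpha,d)>0$ (the small-$\delta$ endpoint being covered by the previous estimate together with $j_{m,\alpha}(r)\sim Cr^{-d-\alpha}$ as $r\to0+$). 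For \eqref{eq:rellapcont2} the same ball estimate is sharper: inserting the explicit formula for $j_{m,\alpha}$ gives $\omega_d\delta^d j_{m,\alpha}(3\delta)=c\,\delta^{(d-\alpha)/2}K_{(d+\alpha)/2}(3m^{1/\alpha}\delta)\ge c'\delta^{(d-\alpha)/2}$ for $\delta\in(0,\delta_1]$, since $K_{(d+\alpha)/2}$ is bounded below on $(0,3m^{1/\alpha}\delta_1]$.

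It remains to combine the two bounds. Since $w(x)<0$, the $T_1$ estimate gives $T_1\le2c(\delta^{-\alpha}-m)w(x)$ when $\delta\le m^{-1/\alpha}$ (where $(\delta^{-\alpha}-m)w(x)\le0$) and simply $T_1\le0$ when $\delta>m^{-1/\alpha}$ (where the target on the right is already nonnegative); adding $T_2\le-C_2\delta\mathcal I$ and taking $C=\min\{2c,C_2\}$ then yields \eqref{eq:rellapcont1} after a short sign check in each case. Estimate \eqref{eq:rellapcont2} follows in the same way with $\delta^{(d-\alpha)/2}$ in place of $\delta^{-\alpha}-m$ --- here $\delta^{(d-\alpha)/2}w(x)<0$ always, so no case split is needed --- and $C=C(m,\alpha,d,\delta_1)$. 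I expect the genuine obstacle to be exactly the lower bound for $\int_{\cU_\mu}j_{m,\alpha}(|x-y^\mu|)\,dy$: the pure-power computation available for $j_{0,\alpha}$ does not apply, and the clean estimate $\int_{\{z_1>\mu\}}\sigma_{m,\alpha}\le m/2$, sharp as $\delta\to0$, does not beat $c(\delta^{-\alpha}-m)$ near $\delta\asymp m^{-1/\alpha}$, which is why the supplementary ball-and-compactness argument is needed.
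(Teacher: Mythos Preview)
Your proof is correct and follows essentially the same route as the paper: the same reflection identity giving $T_1+T_2$, the mean value theorem together with Lemma~\ref{lem:nuprime} for $T_2$, and the decomposition \eqref{dec} with the half-space bound $\int\sigma_{m,\alpha}\le m/2$ for $T_1$. Two minor differences are worth noting. First, for \eqref{eq:rellapcont1} you add a ball--and--compactness patch to secure the lower bound $\int j_{m,\alpha}\ge c(\delta^{-\alpha}-m)$ on the intermediate range of $\delta$; the paper simply combines the constants $2C_1C_2\delta^{-\alpha}$ and $-m$ without this discussion, so your version is a touch more careful. Second, for \eqref{eq:rellapcont2} the paper performs the change of variables $z'=(x'-y')/\delta$, $z_1=(\mu-y_1)/\delta$ directly in $\int j_{m,\alpha}(|x-y^\mu|)\,dy$ and then bounds $K_{(d+\alpha)/2}$ from below on the scaled domain, whereas you reach the same $\delta^{(d-\alpha)/2}$ factor via the inclusion $B_\delta(x^\mu)\subset\{z_1>\mu\}$; both are short and give the same conclusion.
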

\begin{proof}
We proceed through several steps.

\medskip
\noindent
\emph{Step 1:} First observe that since $w \in C^2(\R^d)$, then $w(y)=0$ for every $y \in \partial \cU_\mu$.
Hence $d(x,\partial \cU_\mu)=\delta>0$. Since the expression \eqref{eq:Phidelta} applies for $w(x)$, we have
\begin{equation*}
\Phi_{m,\alpha}(-\Delta)w(x)=\lim_{\varepsilon \downarrow 0}\int_{\R^d \setminus B_\varepsilon(x)}(w(x)-w(y))j_{m,\alpha}(|x-y|)dy.
\end{equation*}
Fix $\varepsilon>0$ and choose it small enough to have $B_\varepsilon(x)\subset \cU_\mu$, which can be done since $\delta>0$. We
split up the integral in two parts as
\begin{align*}
\int_{\R^d \setminus B_\varepsilon(x)}&(w(x)-w(y))j_{m,\alpha}(|x-y|)dy =
\left(\int_{\cU_\mu \setminus B_\varepsilon(x)}+\int_{(\R^d \setminus \cU_\mu)\setminus B_\varepsilon(x)}\right)(w(x)-w(y))j_{m,\alpha}(|x-y|)dy
\end{align*}
and apply the change of variable $y \mapsto y^\mu$ which, using that $w$ is $\mu$-antisymmetric, gives
\begin{align*}
\int_{\R^d \setminus B_\varepsilon(x)}&(w(x)-w(y))j_{m,\alpha}(|x-y|)dy =
\int_{\cU_\mu \setminus B_\varepsilon(x)}(w(x)-w(y))j_{m,\alpha}(|x-y|)dy\\
&\qquad + \int_{\cU_\mu}(w(x)+w(y))j_{m,\alpha}(|x-y^\mu|)dy\\
&=
-\int_{\cU_\mu \setminus B_\varepsilon(x)}(w(y)-w(x))(j_{m,\alpha}(|x-y|)-j_{m,\alpha}(|x-y^\mu|))dy\\
&\qquad + 2w(x)\int_{\cU_\mu \setminus B_\varepsilon(x)}j_{m,\alpha}(|x-y^\mu|)dy+
\int_{B_\varepsilon(x)}(w(x)+w(y))j_{m,\alpha}(|x-y^\mu|)dy.
\end{align*}
Since $w \in L^\infty(\R^d)$, we can take the limit $\varepsilon \downarrow 0$ giving
\begin{align}
\label{eq:intest02}
\begin{split}
\Phi_{m,\alpha}(-\Delta)w(x)
			&=
			-\lim_{\varepsilon \downarrow 0}\int_{\cU_\mu \setminus \cB_\varepsilon(x)}(w(y)-w(x))
			(j_{m,\alpha}(|x-y|)-j_{m,\alpha}(|x-y^\mu|))dy\\
			& \qquad +
			2w(x)\int_{\cU_\mu}j_{m,\alpha}(|x-y^\mu|)dy.
		\end{split}
	\end{align}
	By the decomposition formulae \eqref{dec}-\eqref{withG} we have
	\begin{align}\label{eq:intest01}
		\begin{split}
			L_{m,\alpha}w(x)
			&=
			-\lim_{\varepsilon \downarrow 0}\int_{\cU_\mu \setminus \cB_\varepsilon(x)}(w(y)-w(x))
			(j_{m,\alpha}(|x-y|)-j_{m,\alpha}(|x-y^\mu|))dy\\
			& \qquad
			+C_1(d,\alpha) 2w(x)\int_{\cU_\mu}\frac{1}{|x-y^\mu|^{d+\alpha}}dy-2w(x)\int_{\cU_\mu}\sigma_m(|x-y^\mu|)dy,
		\end{split}
	\end{align}
	where we denote $C_1(d,\alpha)=\frac{\alpha 2^\frac{\alpha-d}{2}}{\pi^{d/2}\Gamma\left(1-\frac{\alpha}{2}\right)}$.
	
	\medskip
	\noindent
	\emph{Step 2:}
	Next we estimate the integrals one by one, starting with the third. We have
	\begin{equation*}
		\int_{\cU_\mu}\sigma_{m,\alpha}(|x-y^\mu|)dy
		=\int_{-\infty}^{\mu}\int_{\R^{d-1}}\sigma_{m,\alpha}((|x'-y'|^2-|2\mu-x_1-y_1|^2)^{1/2})dy'dy_1.
	\end{equation*}
	Setting $z'=x'-y'$ and $z_1=2\mu-x_1-y_1$ we have
	\begin{equation*}
		\int_{\cU_\mu}\sigma_{m,\alpha}(|x-y^\mu|)dy=\int_{\mu-x_1}^{\infty}
		\int_{\R^{d-1}}\sigma_{m,\alpha}((|z'|^2-|z_1|^2)^{1/2})dz'dz_1.
	\end{equation*}
	With $\widetilde{\cU}_0=\{x \in \R^d: \ x_1>0\}$, we have
	\begin{equation*}\label{eq:intest1}
		\int_{\cU_\mu}\sigma_{m,\alpha}(|x-y^\mu|)dy\le \int_{\widetilde{\cU}_0}\sigma_{m,\alpha}(|z|)dz=\frac{m}{2}.
	\end{equation*}

		Next consider the second integral. We have
	\begin{align*}
		\int_{\cU_\mu}\frac{dy}{|x-y^\mu|^{d+\alpha}}
		&=
		\int_{-\infty}^{\mu}\int_{\R^{d-1}}\frac{dy'dy_1}{(|x'-y'|^2+|2\mu-x_1-y_1|^2)^{\frac{d+\alpha}{2}}}\\
		&=
		\frac{1}{(\mu-x_1)^{d+\alpha}}\int_{-\infty}^{\mu}\int_{\R^{d-1}}\frac{dy'dy_1}{\Big(\big|\frac{x'-y'}{\mu-x_1}\big|^2
			+\big|1+\frac{\mu-y_1}{\mu-x_1}\big|^2\Big)^{\frac{d+\alpha}{2}}}.
	\end{align*}
	Setting $z'=\frac{x'-y'}{\mu-x_1}$ and $z_1=\frac{\mu-y_1}{\mu-x_1}$ we get
	\begin{align}
\nonumber
		\begin{split} \int_{\cU_\mu}\frac{dy}{|x-y^\mu|^{d+\alpha}}
			&=
			\frac{1}{(\mu-x_1)^\alpha}\int_{0}^{\infty}\int_{\R^{d-1}}\frac{dz'dz_1}{(|z'|^2+|1+z_1|^2)^{\frac{d+\alpha}{2}}}
			=
			C_2(d,\alpha)\delta^{-\alpha},
		\end{split}
	\end{align}
	with constant
	$C_2(d,\alpha)=\int_{0}^{\infty}\int_{\R^{d-1}}(|z'|^2+|1+z_1|^2)^{-(d+\alpha)/2}dz'dz_1 < \infty$.
	
	Finally consider the first integral. By Lagrange's theorem there exists $|x-y| < \theta(x,y) < |x-y^\mu|$ such
	that
	\begin{align*}
		\int_{\cU_\mu \setminus B_\varepsilon(x)}&(w(y)-w(x))\big(j_{m,\alpha}(|x-y|)-j_{m,\alpha}(|x-y^\mu|)\big)dy\\
		&=
		\int_{\cU_\mu \setminus B_\varepsilon(x)}(w(y)-w(x))j'_{m,\alpha}(\theta(x,y))(|x-y|-|x-y^\mu|)dy\\
		&=
		C_3(d,m,\alpha)\int_{\cU_\mu \setminus B_\varepsilon(x)}(w(y)-w(x))
		\frac{K_{(d+\alpha +2)/2}(m^{1/\alpha}\theta(x,y))}{\theta(x,y)^{\frac{d+\alpha+2}{2}}}(|x-y|-|x-y^\mu|)dy,
	\end{align*}
	where we used Lemma \ref{lem:nuprime} and denoted $C_3(d,m,\alpha)=\frac{\alpha 2^{\frac{\alpha-d}{2}}
		m^{\frac{d+\alpha+2}{2\alpha}}}{\pi^{d/2}\Gamma\left(1-\frac{\alpha}{2}\right)}$. With the choice of $x$,
	recall that $w(y)-w(x)\ge 0$ as $y \in \cU_{\mu}\setminus B_\varepsilon(x)$, which yields
	\begin{align*}
		C_3(d,m,\alpha)\int_{\cU_\mu \setminus B_\varepsilon(x)}&(w(y)-w(x))
		\frac{K_{(d+\alpha +2)/2}(m^{{1/\alpha}}\theta)}{\theta^{\frac{d+\alpha}{2}}}(|x-y^\mu|-|x-y|)dy\\
		&\ge
		C_3(d,m,\alpha)\int_{\cU_\mu \setminus B_\varepsilon(x)}(w(y)-w(x))
		\frac{K_{(d+\alpha +2)/2}(m^{{1/\alpha}}|x-y^\mu|)}{|x-y^\mu|^{\frac{d+\alpha}{2}}}|y^\mu-y|dy\\
		&=
		C_3(d,m,\alpha)\int_{\cU_\mu \setminus B_\varepsilon(x)}(w(y)-w(x))
		\frac{K_{(d+\alpha +2)/2}(m^{{1/\alpha}}|x-y^\mu|)}{|x-y^\mu|^{\frac{d+\alpha}{2}}}(\mu-y_1)dy.
	\end{align*}
	Observe that $|x-y^\mu|\ge \inf_{z \in \widetilde{\cU}_\mu}|x-z|=\mu-x_1=\delta$, and thus
	\begin{align}\label{eq:intest3}
		\begin{split}
			\int_{\cU_\mu \setminus B_\varepsilon(x)}&(w(y)-w(x))(j_{m,\alpha}(|x-y|)-j_{m,\alpha}(|x-y^\mu|))dy\\&
			\ge
			2C_3(d,m,\alpha)\int_{\cU_\mu \setminus B_\varepsilon(x)}(w(y)-w(x))
			\frac{K_{(d+\alpha +2)/2}(m^{{1/\alpha}}|x-y^\mu|)}{|x-y^\mu|^{\frac{d+\alpha}{2}}}(\mu-y_1)dy\\
			&\ge
			C_3(d,m,\alpha)\delta\int_{\cU_\mu \setminus B_\varepsilon(x)}(w(y)-w(x))\frac{K_{(d+\alpha +2)/2}
				(m^{{1/\alpha}}|x-y^\mu|)}{|x-y^\mu|^{\frac{d+\alpha+2}{2}}}(\mu-y_1)dy.
		\end{split}
	\end{align}
	Combining \eqref{eq:intest01}-\eqref{eq:intest3}
	and writing $C=C(d,m,\alpha)=\min\big\{2C_1(d,\alpha),C_2(d,\alpha),2C_3(d,m,\alpha)\big\}$ we
	obtain \eqref{eq:rellapcont1}.
	
	\medskip
	\noindent
	\emph{Step 3:}
	To complete the proof, consider the second integral in \eqref{eq:intest02}. Using the explicit form of
	$j_{m,\alpha}$, we obtain the expressions
	\begin{align*}
		\int_{\cU_\mu}\frac{K_{(d+\alpha)/2}(m^{{1/\alpha}}|x-y^\mu|)}{|x-y^\mu|^{\frac{d+\alpha}{2}}}dy
		&=
		\int_{-\infty}^{\mu}\int_{\R^{d-1}}\frac{K_{(d+\alpha)/2}(m^{{1/\alpha}}
			(|x'-y'|^2+|2\mu-x_1-y_1|^2)^{1/2})}{(|x'-y'|^2+|2\mu-x_1-y_1|^2)^{\frac{d+\alpha}{4}}}dy'dy_1\\
		&=
		\frac{1}{\delta^{\frac{d+\alpha}{2}}}\int_{-\infty}^{\mu}\int_{\R^{d-1}}\frac{K_{(d+\alpha)/2}
			(m^{{1/\alpha}}\delta(|\frac{x'-y'}{\mu-x_1}|^2+|1+
			\frac{\mu-y_1}{\mu-x_1}|^2)^{1/2})}{(|\frac{x'-y'}{\mu-x_1}|^2+|1
			+\frac{\mu-y_1}{\mu-x_1}|^2)^{\frac{d+\alpha}{4}}}dy'dy_1 \\
		&=
		\delta^{\frac{d-\alpha}{2}}\int_{0}^{\infty}\int_{\R^{d-1}}\frac{K_{(d+\alpha)/2}(m^{{1/\alpha}}
			\delta(|z'|^2+|1+z_1|^2)^{1/2})}{(|z'|^2+|1+z_1|^2)^{\frac{d+\alpha}{4}}}dz'dz_1,
	\end{align*}
	where the last line is obtained by setting $z'=\frac{x'-y'}{\mu-x_1}$ and $z_1=\frac{\mu-y_1}{\mu-x_1}$.
	Since $K_{(d+\alpha)/2}$ is decreasing and $\delta \in (0,\delta_1]$, we have
	\begin{align}\label{eq:intest4}
		\begin{split}
			\int_{\cU_\mu}\frac{K_{(d+\alpha)/2}(m^{{1/\alpha}}|x-y^\mu|)}{|x-y^\mu|^{\frac{d+\alpha}{2}}}dy
			\ge C_4(d,m,\alpha,\delta_1)\delta^{\frac{d-\alpha}{2}},
		\end{split}
	\end{align}
	with
	\begin{equation*}
		C_4(d,m,\alpha,\delta_1)
		=\int_{0}^{\infty}\int_{\R^{d-1}}\frac{K_{(d+\alpha)/2}(m^{{1/\alpha}}
			\delta_1(|z'|^2+|1+z_1|^2)^{1/2})}{(|z'|^2+|1+z_1|^2)^{\frac{d+\alpha}{4}}}dz'dz_1<\infty.
	\end{equation*}
	Applying \eqref{eq:intest3}-\eqref{eq:intest4} to \eqref{eq:intest02} (recall that $w(x)<0$), we arrive at
	\eqref{eq:rellapcont2} with the constant $C=C(d,m,\alpha,\delta_1)=\min\{2C_3(d,m,\alpha),C_4(d,m,\alpha,\delta_1)\}$.
\end{proof}

\begin{remark}
\rm{
Since $K_\xi(|x|) \simeq 2^{\xi-1}\Gamma(\xi) |x|^{-\xi}$ as $|x| \downarrow 0$, we see that, apart possibly from the
numerical prefactor, estimate \eqref{eq:rellapcont1} gives consistently back the estimate \eqref{eq:fraclapcont} in the
limit $m \downarrow 0$.
}
\end{remark}
The following is a consequence of Lemmas \ref{lem:fraclapcont} and \ref{lem:rellapcont}, which we will use below.
\begin{corollary}
\label{cor:negLap}
Let $m \ge 0$, $w \in C^2(\R^d) \cap L^\infty(\R^d)$ be a $\mu$-antisymmetric function, and suppose that there exists
$x \in \cU_\mu$ such that $w(x)=\inf_{y \in \cU_\mu}w(y)<0$. Denote $\delta=\mu-x_1$. Then
\begin{equation*}
\Phi_{m,\alpha}(-\Delta)w(x)<0.
\end{equation*}
\end{corollary}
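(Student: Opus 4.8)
The plan is to extract the strict negativity directly from the pointwise bounds already established in Lemmas \ref{lem:fraclapcont} and \ref{lem:rellapcont}, handling the massless ($m=0$) and massive ($m>0$) cases separately.

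First I would record the elementary sign facts that drive the argument. Since $x\in\cU_\mu$ we have $\delta=\mu-x_1>0$; for every $y\in\cU_\mu$ we have $\mu-y_1>0$ and $|x-y^\mu|>0$ (as $y^\mu\notin\cU_\mu$ while $x\in\cU_\mu$); and, crucially, $w(y)-w(x)\ge 0$ for all $y\in\cU_\mu$ because $x$ realises $\inf_{\cU_\mu}w$. Hence the reflected-kernel integral on the right-hand side of \eqref{eq:fraclapcont} is non-negative, and likewise the integral appearing in \eqref{eq:rellapcont2} is non-negative, the extra Bessel factor $K_{(d+\alpha+2)/2}(m^{1/\alpha}|x-y^\mu|)$ being strictly positive. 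In both estimates this integral is subtracted, so discarding it only weakens the bound.

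For $m=0$ I would then simply invoke \eqref{eq:fraclapcont}: dropping the non-negative integral leaves $\Phi_{0,\alpha}(-\Delta)w(x)\le C_{d,\alpha}\,\delta^{-\alpha}w(x)$, which is strictly negative because $C_{d,\alpha}>0$, $\delta^{-\alpha}>0$ and $w(x)<0$. For $m>0$ I would use \eqref{eq:rellapcont2} (not \eqref{eq:rellapcont1}) with the choice $\delta_1:=\delta$, which is legitimate since $\delta\in(0,\delta_1]$ with $0<\delta_1<\infty$; discarding the non-negative integral then gives $\Phi_{m,\alpha}(-\Delta)w(x)\le C(d,m,\alpha,\delta_1)\,\delta^{(d-\alpha)/2}w(x)<0$, again because the prefactor is positive and $w(x)<0$.

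The one subtlety worth flagging --- and the closest thing to an obstacle here --- is the choice of which massive estimate to use. Estimate \eqref{eq:rellapcont1} does \emph{not} suffice on its own: its leading coefficient $\delta^{-\alpha}-m$ changes sign and becomes non-positive once $\delta\ge m^{-1/\alpha}$, so $(\delta^{-\alpha}-m)w(x)$ need not be negative for such $\delta$. It is precisely because the leading coefficient $\delta^{(d-\alpha)/2}$ in \eqref{eq:rellapcont2} never changes sign that the argument goes through for every admissible $\delta$. Beyond this bookkeeping no new estimate is needed: the genuinely hard work, namely the moving-planes-type manipulation that produces the reflected-kernel integral, is already contained in Lemmas \ref{lem:fraclapcont}--\ref{lem:rellapcont}.
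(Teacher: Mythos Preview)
Your proposal is correct and follows exactly the route the paper intends: the corollary is stated there as an immediate consequence of Lemmas \ref{lem:fraclapcont} and \ref{lem:rellapcont}, with no further argument given, and your case split together with the observation that the reflected-kernel integrals are non-negative is precisely how one reads off the conclusion. Your remark that \eqref{eq:rellapcont1} alone would not suffice (because $\delta^{-\alpha}-m$ may be non-positive) is a useful clarification that the paper leaves implicit.
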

Now we are ready to prove the following monotonicity result for $\varphi_{k}$.
\begin{theorem}
\label{prop:mono1}
Let $\chi_{k}:[0,\infty) \to \R$ be such that $\varphi_{k}(x)=\chi_{k}(|x|)$. Then $\chi_{k}$ is non-increasing
in $[a+\varepsilon_k,\infty)$.
\end{theorem}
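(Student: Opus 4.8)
The plan is to run the method of moving (hyper)planes in one fixed direction, using Corollary~\ref{cor:negLap} as the non-local replacement for the maximum principle in narrow regions. By Proposition~\ref{rotsym} the ground state $\varphi_k$ of $\cH^{\varepsilon_k}_{m,\alpha}$ is rotationally symmetric, so the profile $\chi_k$ is well defined and $\chi_k(r)=\varphi_k(r\mathbf e_1)$ for a fixed unit vector $\mathbf e_1$. Hence it suffices to prove that
\[
\varphi_k(x)\ge \varphi_k(x^\mu)\qquad\text{for every }\mu\ge a+\varepsilon_k\text{ and every }x\in\cU_\mu,
\]
since for $a+\varepsilon_k\le r_1<r_2$ the choice $\mu=\tfrac12(r_1+r_2)>a+\varepsilon_k$ and $x=r_1\mathbf e_1\in\cU_\mu$ gives $x^\mu=r_2\mathbf e_1$, whence $\chi_k(r_1)=\varphi_k(x)\ge\varphi_k(x^\mu)=\chi_k(r_2)$. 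Equivalently, I will show that the $\mu$-antisymmetric function $w_\mu(x):=\varphi_k(x^\mu)-\varphi_k(x)$ satisfies $w_\mu\le 0$ on $\cU_\mu$ whenever $\mu\ge a+\varepsilon_k$. Note $w_\mu\in C^\infty(\R^d)\cap L^\infty(\R^d)$, because $\varphi_k\in C^\infty(\R^d)\cap L^\infty(\R^d)$ by Proposition~\ref{prop:regular} and $x\mapsto x^\mu$ is affine, so $\Phi_{m,\alpha}(-\Delta)w_\mu$ is defined pointwise via \eqref{eq:Phidelta} by Lemma~\ref{lem:existence1}.

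Arguing as in the proof of Proposition~\ref{prop:regular}, the eigenvalue equation holds pointwise, $\Phi_{m,\alpha}(-\Delta)\varphi_k(x)=\bigl(\lambda_k+v\eta_{\varepsilon_k}(x)\bigr)\varphi_k(x)$ for all $x$ (both sides are continuous and coincide a.e.). Since $\Phi_{m,\alpha}(-\Delta)$ acts through the translation- and reflection-invariant kernel $j_{m,\alpha}(|\cdot|)$ in \eqref{eq:Phidelta}, it commutes with $u\mapsto u(\cdot^\mu)$, so
\[
\Phi_{m,\alpha}(-\Delta)w_\mu(x)=\bigl[\Phi_{m,\alpha}(-\Delta)\varphi_k\bigr](x^\mu)-\bigl[\Phi_{m,\alpha}(-\Delta)\varphi_k\bigr](x)=\lambda_k w_\mu(x)+v\bigl(\eta_{\varepsilon_k}(x^\mu)\varphi_k(x^\mu)-\eta_{\varepsilon_k}(x)\varphi_k(x)\bigr).
\]
The geometric observation that makes this work: if $\mu\ge a+\varepsilon_k$ and $x\in\cU_\mu$, then $(x^\mu)_1=2\mu-x_1>\mu\ge a+\varepsilon_k$, hence $|x^\mu|>a+\varepsilon_k$ and $\eta_{\varepsilon_k}(x^\mu)=0$. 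Using in addition $\eta_{\varepsilon_k}\ge 0$ and $\varphi_k>0$ we get the one-sided inequality
\[
\Phi_{m,\alpha}(-\Delta)w_\mu(x)=\lambda_k w_\mu(x)-v\eta_{\varepsilon_k}(x)\varphi_k(x)\le \lambda_k w_\mu(x),\qquad x\in\cU_\mu,
\]
where crucially $\lambda_k<0$.

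Now suppose $\sup_{\cU_\mu}w_\mu>0$ for some $\mu\ge a+\varepsilon_k$. Recall that $\varphi_k$ vanishes at infinity (for instance: $\varphi_k\in L^2(\R^d)$, and the proof of Proposition~\ref{prop:regular} shows $|\xi|^{n\alpha}\widehat\varphi_k\in L^2(\R^d)$ for all $n$, whence $\nabla\varphi_k\in L^\infty(\R^d)$, and a Lipschitz $L^2$ function tends to $0$ at infinity), so $w_\mu\to 0$ at infinity on the closed half-space $\overline{\cU_\mu}$; together with $w_\mu\equiv 0$ on $\partial\cU_\mu$ and continuity, the positive supremum of $w_\mu$ is attained at some $x^\ast\in\cU_\mu$. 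Then $-w_\mu\in C^2(\R^d)\cap L^\infty(\R^d)$ is $\mu$-antisymmetric with $-w_\mu(x^\ast)=\inf_{\cU_\mu}(-w_\mu)<0$, so Corollary~\ref{cor:negLap} yields $\Phi_{m,\alpha}(-\Delta)(-w_\mu)(x^\ast)<0$, i.e.\ $\Phi_{m,\alpha}(-\Delta)w_\mu(x^\ast)>0$. This contradicts the previous display, since $\lambda_k w_\mu(x^\ast)=\lambda_k\sup_{\cU_\mu}w_\mu<0$. Hence $w_\mu\le 0$ on $\cU_\mu$ for all $\mu\ge a+\varepsilon_k$, and the reduction above shows that $\chi_k$ is non-increasing on $[a+\varepsilon_k,\infty)$.

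The real content is packaged in Corollary~\ref{cor:negLap} (equivalently Lemmas~\ref{lem:fraclapcont}--\ref{lem:rellapcont}), which is the non-local substitute for the narrow-region maximum principle of the classical moving-planes scheme; granted that, the only point that genuinely needs care is the sign of the differential inequality for $w_\mu$, which is forced by the observation that the attractive well $\eta_{\varepsilon_k}$ is invisible on the reflected side as soon as the reflecting plane lies at $\mu\ge a+\varepsilon_k$, so that only the favourable term $\lambda_k w_\mu$ with $\lambda_k<0$ survives. A secondary, routine point is ensuring the supremum of $w_\mu$ over the half-space is attained, which rests on the decay of $\varphi_k$ at infinity.
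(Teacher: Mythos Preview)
Your proof is correct and follows essentially the same moving-planes argument as the paper, the only difference being an immaterial choice of orientation: the paper works with $\mu\le -(a+\varepsilon_k)$ and shows $\inf_{\cU_\mu}w_\mu^k\ge 0$ (so the point $x_*$ itself lies outside the support of $V_k$), whereas you work with $\mu\ge a+\varepsilon_k$ and show $\sup_{\cU_\mu}w_\mu\le 0$ (so the reflected point $x^\mu$ lies outside the support of $\eta_{\varepsilon_k}$). Both versions feed the same contradiction from Corollary~\ref{cor:negLap} against the pointwise eigenvalue computation, and the decay of $\varphi_k$ at infinity (which you justify via Sobolev regularity, the paper via \cite{KL17}) is used identically to secure attainment of the extremum.
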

\begin{proof}
For any $\mu \le 0$, let $\varphi_{k}^\mu(x)=\varphi_{k}(x^\mu)$ and $w_\mu^k(x):=\varphi_{k}^\mu(x)-\varphi_{k}(x)$.
Clearly, since $\varphi_{k} \in C^2(\R^d)\cap L^\infty(\R^d)$, also $w_{\mu}^k \in C^2(\R^d) \cap L^\infty(\R^d)$, for every
$\mu \le 0$. Furthermore, it is $\mu$-antisymmetric by construction. Also, $\varphi_{k}(x) \to 0$ as $|x| \to \infty$ (see,
\cite[Cor. 4.1, 4.3]{KL17}).

Let $\mu \le -(a+\varepsilon_k)$ and assume that $\inf_{x \in \cU_\mu}w_{\mu}^k(x)<0$. Since $w_{\mu}^k(x) \to 0$ as $|x| \to
0$ and $w_\mu^k$ is continuous, the infimum is actually a minimum, i.e., there exists $x_* \in \overline{\cU}_\mu$ such that
$w_\mu^k(x_*)=\inf_{x \in \cU_\mu}w_{\mu}^k(x)$. Obviously, $x_* \not \in \partial \cU_\mu$, otherwise $w_\mu^k(x_*)$ and then
$\Phi_{m,\alpha}(-\Delta)w_\mu^k(x_*)<0$ by Corollary \ref{cor:negLap}. On the other hand,
\begin{align}
\label{eq:oponanti}
\begin{split}
\Phi_{m,\alpha}(-\Delta)w_\mu^k(x_*)
&=\Phi_{m,\alpha}(-\Delta)\varphi_{k}^\mu(x_*)-\Phi_{m,\alpha}(-\Delta)\varphi_{k}(x_*)\\
&=\Phi_{m,\alpha}(-\Delta)\varphi_{k}(x_*^\mu)-\Phi_{m,\alpha}(-\Delta)\varphi_{k}(x_*)\\
&=(\lambda_k-V_k(x_*^\mu))\varphi_{k}(x_*^\mu)-(\lambda_k-V_k(x_*))\varphi_{k}(x_*)\\
&=(\lambda_k-V_k(x_*)) w_\mu^k(x_*)+(V_k(x_*)-V_k(x_*^\mu))\varphi_{k}(x_*).
\end{split}
\end{align}
Since $x_* \in \cU_{\mu}$, we have $|x_*|\ge |x_1|>a+\varepsilon_k$ and $V_k(x_*)=0$. Hence we get
\begin{align*}
\Phi_{m,\alpha}(-\Delta)w_\mu^k(x_*)&=\lambda w_\mu^k(x_*)-V_k(x_*^\mu)\varphi_{k}(x_*)>0,
\end{align*}
which is a contradiction. Hence $\inf_{x \in \cU_\mu}w_\mu^k(x)\ge 0$ and thus $w_\mu^k(x)\ge 0$ for all $x \in \cU_\mu$.

Now let $r_1>r_2 \ge a+\varepsilon_k$ and consider $x=-r_1\mathbf{e}_1$ and $y=-r_2\mathbf{e}_1$, where $\mathbf{e}_1=
(1,0,\dots,0)$. Furthermore, let $\mu=-\frac{r_1+r_2}{2}$. Clearly, $\mu \le -(a+\varepsilon)$, $x \in \cU_\mu$ and $y=x^\mu$.
The previous argument guarantees that $w_\mu(x)\ge 0$, which implies that $\chi_{k}(r_2)=\varphi_{k}(y)\ge \varphi_{k}(x)
=\chi_{k}(r_1)$.
\end{proof}
As a consequence of this and Theorem \ref{thm:stability}, we get the following monotonicity result for $\varphi$.
\begin{theorem}
\label{prop:mono2}
Let $\chi: [0,\infty) \to \R$ be such that $\varphi(x)=\chi(|x|)$. Then $\chi$ is non-increasing on $[a,\infty)$.
\end{theorem}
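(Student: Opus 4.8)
The plan is to transfer the monotonicity of the approximants $\varphi_k$ obtained in Theorem \ref{prop:mono1} to the limit $\varphi$, using the convergence supplied by Proposition \ref{appx} (which in turn rests on Theorem \ref{thm:stability}). First I would collect the regularity and symmetry at our disposal: by Proposition \ref{rotsym} the ground state $\varphi$ is rotationally symmetric, so the profile $\chi$ with $\varphi(x)=\chi(|x|)$ is well defined, and by Remark \ref{rmk:continuity} we have $\varphi\in C(\R^d)$, hence $\chi$ is continuous on $[0,\infty)$; similarly each $\varphi_k=\varphi_{\varepsilon_k}$ is continuous (in fact $C^\infty$) by Proposition \ref{prop:regular}, so the profiles $\chi_k$ are continuous.

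Next I would descend the almost everywhere convergence $\varphi_k\to\varphi$ on $\R^d$ from Proposition \ref{appx} to the profiles. If $N\subset\R^d$ denotes the exceptional null set, then by Fubini's theorem in polar coordinates the sphere $\partial B_r$ fails to be contained in $N$ for a.e.\ $r>0$; evaluating the radial functions at a point of $\partial B_r\setminus N$ gives $\chi_k(r)\to\chi(r)$. Thus $\chi_k\to\chi$ pointwise on a set $G\subset(0,\infty)$ of full Lebesgue measure; in particular $G$ is dense in $(a,\infty)$.

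Then comes the elementary limiting step. Fix $r_1>r_2>a$ with $r_1,r_2\in G$. Since $\varepsilon_k\downarrow 0$, for all sufficiently large $k$ one has $a+\varepsilon_k<r_2<r_1$, and hence $\chi_k(r_1)\le\chi_k(r_2)$ by Theorem \ref{prop:mono1}; passing to the limit $k\to\infty$ gives $\chi(r_1)\le\chi(r_2)$. Therefore $\chi$ is non-increasing on the dense set $G\cap(a,\infty)$, and being continuous it is non-increasing on $(a,\infty)$, and then on $[a,\infty)$ because $\chi(a)=\lim_{t\downarrow a}\chi(t)\ge\chi(r)$ for every $r>a$.

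I do not anticipate a real obstacle: the substance lies in Theorem \ref{prop:mono1}, whose proof rests on the maximum-principle estimate of Lemma \ref{lem:rellapcont}, and in the stability machinery behind Proposition \ref{appx}. The only points needing care here are that a.e.\ convergence on $\R^d$ yields a.e.\ convergence of the radial profiles (handled by the Fubini argument above) and the endpoint $r=a$, which is covered by continuity of $\chi$. One may alternatively replace the Fubini step by the $L^2$-convergence $\varphi_k\to\varphi$, which gives $\chi_k\to\chi$ in $L^2((0,\infty),r^{d-1}\,dr)$ and hence a.e.\ along a further subsequence; since Theorem \ref{prop:mono1} holds for the full sequence $\varphi_k$, this does not affect the argument.
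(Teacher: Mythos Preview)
Your proposal is correct and follows essentially the same approach as the paper: transfer the monotonicity of the approximants $\chi_k$ from Theorem \ref{prop:mono1} to $\chi$ via the almost-everywhere convergence of Proposition \ref{appx}, and close with the continuity of $\varphi$. The only cosmetic difference is that the paper works directly in $\R^d$, picking sequences $(x^\ell),(y^\ell)$ in the full-measure set $\Omega=\{x:\varphi_k(x)\to\varphi(x)\}$ converging to $-r_1\mathbf{e}_1$ and $-r_2\mathbf{e}_1$, whereas you first reduce to one dimension by the Fubini argument to get $\chi_k\to\chi$ on a dense subset of $(a,\infty)$; both routes are equivalent and the endpoint $r=a$ is handled identically by continuity.
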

\begin{proof}
Let $\Omega=\{x \in \R^d: \ \lim_{k \to \infty}\varphi_{k}(x)=\varphi(x)\}$ and observe that $|\R^d \setminus \Omega|=0$.
As a consequence, $\Omega$ is dense in $\R^d$. Let $r_1>r_2>a$ and consider $x=-r_1\mathbf{e}_1$ and $y=-r_2 \mathbf{e}_2$.
Consider two sequences $(x^\ell)_{\ell \in \N}, (y^\ell)_{\ell \in \N} \subset \Omega$ with $x^\ell \to x$ and $y^\ell \to y$.
Since $|x|>|y|>a$, we can assume without loss of generality that $|x^\ell|>|y^\ell|>a$ for every $\ell \in \N$. Now fix $\ell
\in \N$ and observe that there exists $k_* \in \N$ such that $|y^\ell|>a+\varepsilon_k$ for every $k \ge k_*$. By Theorem
\ref{prop:mono1} we know that $\varphi_{k}(x^\ell) \le \varphi_{k}(y^\ell)$ for every $k \ge k_*$. Taking the limit as $k
\to \infty$ and using the fact that $x^\ell,y^\ell \in \Omega$, we obtain $\varphi(x^\ell)\le \varphi(y^\ell)$. Since
$\varphi$ is continuous, taking the limit $\ell \to \infty$ we get $\varphi(x)\le \varphi(y)$. Finally, the case
$r_2=a$ is obtained by continuity of $\varphi$.
\end{proof}

Next we prove that $\chi$ is non-increasing also in $[0,a]$. To do this, we need a family of auxiliary functions. Specifically,
for every $\mu \le 0$ define $w_\mu(x)=\varphi(x^\mu)-\varphi(x)$.
\begin{lemma}
\label{lem:mono3}
Let $\mu \in (-a,0]$. If $x \in \cU_\mu$ is such that $x^\mu \not \in B_a$, then $w_\mu(x)\ge 0$.
\end{lemma}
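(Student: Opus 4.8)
The plan is to derive this directly from the radial monotonicity of $\varphi$ outside the well, Theorem \ref{prop:mono2}, combined with the elementary geometric remark that reflecting a point of $\cU_\mu$ across the hyperplane $\{x_1=\mu\}$ with $\mu\le 0$ never increases its distance to the origin. Recall that $\varphi$ is rotationally symmetric (Proposition \ref{rotsym}), so $\varphi(x)=\chi(|x|)$, and $\chi$ is non-increasing on $[a,\infty)$ by Theorem \ref{prop:mono2}. Hence it will be enough to show that both $|x|$ and $|x^\mu|$ lie in $[a,\infty)$ and that $|x^\mu|\le|x|$.

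First I would record the relevant identity: writing $x=(x_1,x')$ so that $x^\mu=(2\mu-x_1,x')$, one computes $|x|^2-|x^\mu|^2 = x_1^2-(2\mu-x_1)^2 = 4\mu(x_1-\mu)$. For $x\in\cU_\mu$ we have $x_1-\mu<0$, and since $\mu\le 0$ (which holds because $\mu\in(-a,0]$) the right-hand side is non-negative; hence $|x^\mu|\le|x|$. Next, the hypothesis $x^\mu\notin B_a$ gives $|x^\mu|\ge a$, so that $|x|\ge|x^\mu|\ge a$ and both radii belong to $[a,\infty)$. Finally, since $\chi$ is non-increasing on $[a,\infty)$ by Theorem \ref{prop:mono2}, we conclude $\varphi(x^\mu)=\chi(|x^\mu|)\ge\chi(|x|)=\varphi(x)$, i.e. $w_\mu(x)\ge 0$.

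I do not expect a genuine obstacle in this argument: the whole substance is already contained in Theorem \ref{prop:mono2}, and the only points to watch are the sign in the identity $|x|^2-|x^\mu|^2=4\mu(x_1-\mu)$ and the precise roles of the two hypotheses $\mu\le 0$ and $x^\mu\notin B_a$, which together force $|x|\ge|x^\mu|\ge a$. (The restriction $\mu>-a$ is not actually used here; it is relevant only because for $\mu\le-a$ one can prove the stronger statement that $w_\mu\ge 0$ on all of $\cU_\mu$, exactly as in the proof of Theorem \ref{prop:mono1}, whereas for $\mu\in(-a,0]$ the plane meets $B_a$ and only the present weaker conclusion is available by this elementary route.)
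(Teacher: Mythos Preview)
Your proof is correct and follows essentially the same approach as the paper's own argument: both show $|x^\mu|\le|x|$, combine this with the hypothesis $|x^\mu|\ge a$ to place both radii in $[a,\infty)$, and then invoke the radial monotonicity of $\varphi$ outside the well (Theorem~\ref{prop:mono2}). The only cosmetic difference is that you obtain $|x^\mu|\le|x|$ via the clean algebraic identity $|x|^2-|x^\mu|^2=4\mu(x_1-\mu)$, whereas the paper deduces $|2\mu-x_1|\le|x_1|$ from the triangle inequality; your version also handles the endpoint $\mu=0$ transparently.
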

\begin{proof}
Let $x \in \R^d$ be such that $x^\mu \not \in B_a$. Since $a<\mu<0$ we have that $x \not \in B_a$. Note that $x=(x_1,x')$ and
$x^\mu=(2\mu-x_1,x')$. Since $x_1<\mu<0$, it follows that $|2\mu-x_1|\le |\mu|+|\mu-x_1|=-\mu+\mu-x_1=-x_1=|x_1|$. Hence $|x|
\ge |x^\mu|\ge a$ and $\varphi(x)\le \varphi(x^\mu)$ by Proposition \ref{prop:mono2}, which shows the claim.
\end{proof}

The following result highlights the relation between the ground state eigenvalue and the principal Dirichlet eigenvalue of
the support of the potential well.
\begin{lemma}
\label{lem:mono4}
Let $\lambda_a$ be the principal Dirichlet eigenvalue of $B_a$. Then $\lambda+v<\lambda_a$.
\end{lemma}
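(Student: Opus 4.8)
The idea is to use the Dirichlet eigenfunction $f_a$ of $B_a$ as a trial function in the variational characterization \eqref{eq:minmax} of the ground state eigenvalue $\lambda$ of $\cH_{m,\alpha}=\Phi_{m,\alpha}(-\Delta)+V$, and then to upgrade the resulting non-strict inequality to a strict one using the simplicity and strict positivity of the ground state.

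First I recall that the principal Dirichlet eigenfunction $f_a\in H_0^{\Phi_{m,\alpha}}(B_a)\subset H^{\Phi_{m,\alpha}}(\R^d)$ satisfies $f_a\ge 0$, $\Norm{f_a}{2}=1$ and $\supp f_a=B_a$, and that, exactly as observed in the proof of Proposition \ref{eq:prop}, $\cE_{m,\alpha}(f_a,f_a)=\cE_{\Phi_{m,\alpha},B_a}(f_a,f_a)=\lambda_a$. Using $f_a$ as a competitor in \eqref{eq:minmax} and recalling $V=-v\mathbf 1_{B_a}$,
\[
\lambda\le \cA_{\Phi_{m,\alpha},V}(f_a,f_a)=\cE_{m,\alpha}(f_a,f_a)+\int_{\R^d}V(x)f_a^2(x)\,dx=\lambda_a-v\int_{B_a}f_a^2(x)\,dx=\lambda_a-v,
\]
since $\Norm{f_a}{2}=1$ and $f_a$ is supported in $B_a$. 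This already gives $\lambda+v\le\lambda_a$.

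It remains to rule out equality. Suppose $\lambda+v=\lambda_a$; then the chain above is an equality, so $f_a$ attains the minimum in \eqref{eq:minmax}, i.e.\ $f_a$ is a minimizer of the Rayleigh quotient of the self-adjoint operator $\cH_{m,\alpha}$ at the bottom of its spectrum. By the spectral theorem this forces $f_a\in\ker(\cH_{m,\alpha}-\lambda)$, i.e.\ $f_a$ is a ground state eigenfunction; since by Assumption \ref{ass:existence} (via Perron--Frobenius) the ground state is simple with strictly positive version $\varphi$, we must have $f_a=c\varphi$ for some $c>0$. This contradicts the fact that $f_a\equiv 0$ on $\R^d\setminus B_a$, a set of positive Lebesgue measure. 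Hence $\lambda+v<\lambda_a$.

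The computation itself is routine; the only point that requires a little care is the last step, namely the justification that a function realizing the infimum in \eqref{eq:minmax} must be an eigenfunction at the ground state energy, which is where the self-adjointness of $\cH_{m,\alpha}$ and the Perron--Frobenius structure recorded in Section 2.3 enter.
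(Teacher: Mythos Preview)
Your proof is correct and follows essentially the same approach as the paper's: use the Dirichlet eigenfunction $f_a$ as a competitor in the variational characterization \eqref{eq:minmax} to get $\lambda\le\lambda_a-v$, then invoke simplicity of the ground state to make the inequality strict. The paper's proof is terser---it simply writes $\lambda=\cA_{m,\alpha}(\varphi,\varphi)<\cA_{m,\alpha}(f_a,f_a)$ and justifies strictness by ``the fact that $\lambda$ is a simple eigenvalue''---whereas you spell out the underlying contradiction (a minimizer would have to be proportional to the strictly positive $\varphi$, yet $f_a$ vanishes on $\R^d\setminus B_a$).
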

\begin{proof}
Let $f_a$ be a Dirichlet eigenfunction with $\Norm{f_a}{2}=1$. Since $\varphi$ is the minimizer of $\cA_{m,\alpha}:=
\cA_{\Phi_{m,\alpha},v}$ on the set $\{u \in H^{\alpha/2}(\R^d): \Norm{u}{2}=1\}$, we have
\begin{equation*}
\lambda=\cA_{m,\alpha}(\varphi,\varphi)<\cA_{m,\alpha}(f_a,f_a),
\end{equation*}
where the inequality is strict by the fact that $\lambda$ is a simple eigenvalue. Since $f_a$ is supported in $B_a$, we have
\begin{equation*}
\cA_{m,\alpha}(f_a,f_a)=\cE_{m,\alpha}(f_a,f_a)+\int_{B_a}V(x)f^2_a(x)dx=\lambda_a-v,
\end{equation*}
which shows the claim.
\end{proof}

Now we are finally ready to prove that $\varphi_0$ is radially decreasing.
\begin{theorem}
\label{monout}
Let $\chi: [0,\infty) \to \R$ be such that $\varphi(x)=\chi(|x|)$. Then $\chi$ is non-increasing.
\end{theorem}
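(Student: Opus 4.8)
The plan is a moving-planes argument in the $x_1$-direction, carried out on the smooth approximants $\varphi_k$ (so that Corollary~\ref{cor:negLap} and Lemma~\ref{lem:rellapcont} may be applied pointwise without any regularity scruples) and then transferred to $\varphi$ exactly as in the proof of Theorem~\ref{prop:mono2}. By Theorem~\ref{prop:mono2} we already have that $\chi$ is non-increasing on $[a,\infty)$, and by transitivity through the value $\chi(a)$ it suffices to treat $0\le r_2<r_1\le a$; choosing $\mu=-(r_1+r_2)/2\in(-a,0)$ and $x=-r_1\mathbf{e}_1$, so that $x^\mu=-r_2\mathbf{e}_1$, the claim reduces to showing that for every $\mu\in(-a,0)$ one has $w_\mu^k(x):=\varphi_k(x^\mu)-\varphi_k(x)\ge 0$ for all $x\in\cU_\mu$, and then letting $k\to\infty$ along the sequence of Proposition~\ref{appx} (using a.e.\ convergence and continuity of $\varphi$, as in Theorem~\ref{prop:mono2}). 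Note $w_\mu^k\in C^2(\R^d)\cap L^\infty(\R^d)$ and is $\mu$-antisymmetric by construction.

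First I would collect the pointwise ingredients at a putative negative minimum $x_*\in\cU_\mu$ of $w_\mu^k$. As in Theorem~\ref{prop:mono1}, if $\mu\le-(a+\varepsilon_k)$ then $|x_*|>a+\varepsilon_k$, so $V_k(x_*)=0$, and the eigenvalue equation together with Corollary~\ref{cor:negLap} gives a sign contradiction, so $w_\mu^k\ge 0$ on $\cU_\mu$ there; this is the base case for the sliding. For $\mu\in(-(a+\varepsilon_k),0)$ the reflection decreases the radius, $|x_*^\mu|\le|x_*|$, so (arguing as in Lemma~\ref{lem:mono3}) $w_\mu^k(x_*)<0$ forces $x_*^\mu\in B_{a+\varepsilon_k}$, while radial monotonicity of $\eta_{\varepsilon_k}$ gives $V_k(x_*)-V_k(x_*^\mu)\ge 0$. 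Inserting the eigenvalue equation,
\[
\Phi_{m,\alpha}(-\Delta)w_\mu^k(x_*)=(\lambda_k-V_k(x_*^\mu))\,w_\mu^k(x_*)+(V_k(x_*)-V_k(x_*^\mu))\,\varphi_k(x_*)\ \ge\ (\lambda_k-V_k(x_*^\mu))\,w_\mu^k(x_*),
\]
which rules out $x_*\notin\overline{B_{a+\varepsilon_k}}$ (then $V_k(x_*)=0$ and the right-hand side is $>0$) and the case $\lambda_k+v\le 0$ (then $\lambda_k-V_k(x_*^\mu)\le 0$ and the right-hand side is $\ge 0$), both contradicting Corollary~\ref{cor:negLap}. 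The only surviving configuration is a negative minimum with $x_*$ inside the well and $\lambda_k+v>0$. To control it, since $w_\mu^k(y)-w_\mu^k(x_*)\ge 0$ and $\mu-y_1>0$ on $\cU_\mu$ the tail integral in \eqref{eq:rellapcont1} of Lemma~\ref{lem:rellapcont} (or in \eqref{eq:fraclapcont} of Lemma~\ref{lem:fraclapcont} when $m=0$) is nonnegative, so with $\delta=\mu-x_{*,1}$,
\[
(\lambda_k-V_k(x_*^\mu))\,w_\mu^k(x_*)\ \le\ \Phi_{m,\alpha}(-\Delta)w_\mu^k(x_*)\ \le\ C\,(\delta^{-\alpha}-m)\,w_\mu^k(x_*);
\]
dividing by $w_\mu^k(x_*)<0$ and using $\lambda_k-V_k(x_*^\mu)\le\lambda_k+v$ gives $\delta^{-\alpha}-m\le(\lambda_k+v)/C$, i.e.\ $\delta\ge\delta_0>0$, where $\delta_0$ depends only on an upper bound for $\lambda_k+v$ and on $m,\alpha,d$. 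The approximant counterpart of Lemma~\ref{lem:mono4}, namely $\lambda_k+v<\lambda_a$ (obtained by testing \eqref{eq:minmax} against the principal Dirichlet eigenfunction $f_a$ of $B_a$, whose support lies in $B_a$ where $V_k\equiv-v$), then makes $\delta_0$ independent of $k$: every putative negative minimizer stays a fixed distance from the reflection hyperplane.

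With this I would close by sliding. Let $\mu_k^*\in[-(a+\varepsilon_k),0]$ be the supremum of the $\mu$ up to which $w_{\mu'}^k\ge0$ on $\cU_{\mu'}$ for all $\mu'\le\mu$. If $\mu_k^*<0$, continuity gives $w_{\mu_k^*}^k\ge0$ on $\cU_{\mu_k^*}$, and the nonlocal strong maximum principle (at a zero of a nonnegative $\mu$-antisymmetric function, $\Phi_{m,\alpha}(-\Delta)$ is $\le0$, with equality forcing the function to vanish identically) yields the dichotomy $w_{\mu_k^*}^k\equiv0$ or $w_{\mu_k^*}^k>0$ on $\cU_{\mu_k^*}$. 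The first alternative would make $\varphi_k$ symmetric about $\{x_1=\mu_k^*\}$, which together with its rotational symmetry (Proposition~\ref{rotsym}, which applies to $\varphi_k$) and $\mu_k^*<0$ would make $\varphi_k$ periodic in $x_1$, contradicting $\varphi_k>0$ and $\varphi_k(x)\to0$ as $|x|\to\infty$. In the second alternative, for $\mu_j\downarrow\mu_k^*$ with $w_{\mu_j}^k$ negative somewhere, the minimizers $x_j$ lie in the bounded set $\overline{B_{a+\varepsilon_k}}$ and satisfy $\mu_j-x_{j,1}\ge\delta_0$, so along a subsequence $x_j\to\bar x\in\cU_{\mu_k^*}$ (bounded away from the hyperplane) with $w_{\mu_k^*}^k(\bar x)\le0$, contradicting $w_{\mu_k^*}^k>0$. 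Hence $\mu_k^*=0$, so $w_\mu^k\ge0$ on $\cU_\mu$ for every $\mu\in(-(a+\varepsilon_k),0)$, i.e.\ $\chi_k$ is non-increasing on $[0,\infty)$, and passing to the limit $k\to\infty$ as above finishes the proof.

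The main obstacle is the narrow-domain step of the third paragraph: extracting from Lemma~\ref{lem:rellapcont} the uniform separation $\delta\ge\delta_0$ of the putative minimizers from the reflection plane and meshing it with the strong maximum principle so that the moving plane can advance past every $\mu<0$ — this is precisely where Lemma~\ref{lem:mono4} is indispensable, since it bounds the zeroth-order coefficient $\lambda+v$ (and $\lambda_k+v$) strictly below $\lambda_a$ uniformly. A lesser but genuine point is that every pointwise manipulation must be performed on the $C^\infty$ approximants $\varphi_k$ of Proposition~\ref{prop:regular} and only afterwards transferred to $\varphi$, since $\varphi$ itself need not be $C^2$ across $\partial B_a$.
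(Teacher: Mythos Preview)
Your argument is essentially correct and takes a genuinely different route from the paper. The paper works \emph{variationally}: it passes to the weak equation \eqref{eq:weakeigen} for $w_\mu$, tests against $u=w_\mu^-\mathbf 1_{\cU_\mu}$, manipulates $\cE_{m,\alpha}(w_\mu,u)\le-\cE_{m,\alpha}(u,u)$, and closes with the Dirichlet eigenvalue bound $\cE_{m,\alpha}(u,u)\ge\lambda_a\|u\|_2^2$ together with Lemma~\ref{lem:mono4}. There is no sliding and no strong maximum principle; the negative part is killed in one stroke for every $\mu\in(-a,0]$. You instead run a \emph{pointwise} moving-planes-with-sliding scheme on the smooth $\varphi_k$: the narrow-domain bound from Lemmas~\ref{lem:fraclapcont}/\ref{lem:rellapcont} keeps the negative minimizer a fixed distance $\delta_0$ from the hyperplane, and a strong maximum principle plus compactness pushes the critical $\mu_k^*$ to $0$. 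Your route is closer to Chen--Li--type arguments and makes essential use of Lemma~\ref{lem:rellapcont}, whereas the paper only uses that lemma through Corollary~\ref{cor:negLap} (for Theorem~\ref{prop:mono1}); conversely, the paper's variational step avoids the sliding/compactness machinery and the antisymmetric strong maximum principle you invoke. Both routes hinge on Lemma~\ref{lem:mono4} (or its approximant analogue $\lambda_k+v<\lambda_a$, which you correctly observe follows from the same test function).

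One small slip to flag: from your displayed identity
\[
\Phi_{m,\alpha}(-\Delta)w_\mu^k(x_*)=(\lambda_k-V_k(x_*^\mu))\,w_\mu^k(x_*)+(V_k(x_*)-V_k(x_*^\mu))\,\varphi_k(x_*)
\]
you cannot conclude that the right-hand side is positive when $V_k(x_*)=0$ and $\lambda_k+v>0$ (the first summand is then negative). To rule out $x_*\notin\overline{B_{a+\varepsilon_k}}$ you should instead use the alternative regrouping $(\lambda_k-V_k(x_*))w_\mu^k(x_*)+(V_k(x_*)-V_k(x_*^\mu))\varphi_k(x_*^\mu)$, which with $V_k(x_*)=0$ becomes $\lambda_k w_\mu^k(x_*)-V_k(x_*^\mu)\varphi_k(x_*^\mu)>0$ (exactly as in the proof of Theorem~\ref{prop:mono1}). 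This does not affect your narrow-domain bound or the compactness step, since $x_*^\mu\in B_{a+\varepsilon_k}$ already confines $x_*$ to a bounded set. The antisymmetric strong maximum principle you use (zero of a nonnegative $\mu$-antisymmetric $w$ forces $w\equiv0$) is not stated in the paper, but follows directly from \eqref{eq:intest02} together with the strict monotonicity of $j_{m,\alpha}$ and the eigenvalue equation at the zero.
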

\begin{proof}
As Theorem \ref{prop:mono2} guarantees that $\chi$ is non-increasing in $[a,\infty)$, we only need to prove that it is also
non-increasing in $[0,a]$. To do this, let $\mu \in (-a,0]$ and consider $w_\mu$. Let also $w^k_\mu$ as in the proof of Theorem
\ref{prop:mono1} and observe that by Theorem \ref{thm:regularphi0} we have $\Phi_{m,\alpha}(-\Delta)w_\mu^k \to \Phi_{m,\alpha}
(-\Delta)w_\mu$ in $L^2(\R^d)$. Consider an arbitrary function $u \in C_{\rm c}^\infty(\R^d)$ and note that
\begin{equation}
\label{eq:weakapprox}
\langle \Phi_{m,\alpha}(-\Delta)w_\mu^k, u\rangle=\langle (\lambda_k-V_k)w_\mu^k, u\rangle
+ \langle (V_k-V_k^\mu)\varphi_{k}, u\rangle,
\end{equation}
where $V_k^\mu(x)=V_k(x^\mu)$. Recall also that $w_\mu^k \to w_\mu$ in $L^2(\R^d)$ and $\Norm{w_\mu}{\infty}\le
2 \Norm{\varphi}{\infty}<\infty$. Thus we get
\begin{align*}
\int_{\R^d}&|(\lambda_k-V_k(x))w_\mu^k(x)-(\lambda-V(x))w_\mu(x)|^2dx
\le 2\int_{\R^d}|\lambda_k w_\mu^k(x)-\lambda w_\mu(x)|^2dx\\
&\quad +2\int_{\R^d}|V_k(x)w_\mu^k(x)-V(x)w_\mu(x)|^2dx\\
&\le 2(|\lambda_k|^2+v^2)\Norm{w_\mu^k-w_\mu}{2}^2+2\Norm{w_\mu}{2}^2|\lambda_k-\lambda|^2
+4\Norm{\varphi}{\infty}^2\Norm{V_k-V}{2}^2,
\end{align*}
which implies $(\lambda_k-V_k)w_\mu^k \to (\lambda-V)w_\mu$ in $L^2(\R^d)$. Arguing in the same way, we get
$(V_k-V_k^\mu)\varphi_{k} \to (V-V^\mu)\varphi$ in $L^2(\R^d)$, where $V^\mu(x)=V(x^\mu)$. Hence by taking the limit
$k \to \infty$ in \eqref{eq:weakapprox}, we obtain
\begin{equation}
\label{eq:weakeigen}
\cE_{m,\alpha}(w_\mu,u)=\langle \Phi_{m,\alpha}(-\Delta)w_\mu, u\rangle=\langle (\lambda-V)w_\mu, u\rangle
+ \langle (V-V^\mu)\varphi, u\rangle.
\end{equation}
Since $u \in C_{\rm c}^\infty(\R^d)$ is arbitrary, equality \eqref{eq:weakeigen} holds also for $u \in H^{\alpha/2}(\R^d)$.
At this point we borrow an argument from \cite[Prop. 3.1]{FJ15}. Since $w_\mu$ is continuous and thus $w_\mu(x)=0$ for all
$x \in \partial \cU_\mu$, we may consider $u=w_\mu^-\mathbf{1}_{\cU_\mu}$ as a test function (see, e.g., \cite[Exercise 3.22]
{M00}), where $w_\mu^-=\max\{-w_\mu,0\}$, and observe that
\begin{align*}
(w_\mu(x)-w_\mu(y))(u(x)-u(y))=-(u(x)-u(y))^2-u(x)(w_\mu(y)+u(y))-u(y)(w_\mu(x)-u(x)).
\end{align*}
Hence we have
\begin{align*}
\cE_{m,\alpha}&(w_\mu,u)=-\cE_{m,\alpha}(u,u)-\int_{\R^d}\int_{\R^d}u(x)(w_\mu(y)+u(y))j_{m,\alpha}(|x-y|)dxdy\\
&=-\cE_{m,\alpha}(u,u)-\int_{\cU_\mu}\int_{\cU_\mu}u(x)(w_\mu(y)+u(y))j_{m,\alpha}(|x-y|)dydx\\
&\qquad -\int_{\cU_\mu}\int_{\R^d\setminus \cU_\mu}u(x)(w_\mu(y)+u(y))j_{m,\alpha}(|x-y|)dydx\\
&=-\cE_{m,\alpha}(u,u)-\int_{\cU_\mu}\int_{\cU_\mu}u(x)w_\mu^+(y)j_{m,\alpha}(|x-y|)dydx
+\int_{\cU_\mu}\int_{\cU_\mu}u(x)w_\mu(y)j_{m,\alpha}(|x-y^\mu|)dydx\\
&=-\cE_{m,\alpha}(u,u)
-\int_{\cU_\mu}\int_{\cU_\mu}u(x)(w_\mu^+(y)(j_{m,\alpha}(|x-y|)-j_{m,\alpha}(|x-y^\mu|))+w_\mu^-j_{m,\alpha}(|x-y^\mu|))dydx\\
&\le -\cE_{m,\alpha}(u,u).
\end{align*}
On the other hand, we have
\begin{align}
\label{eq:ineqweak}
\begin{split}
0&=\cE_{m,\alpha}(w_\mu,u)-\langle (\lambda-V)w_\mu, u\rangle - \langle (V-V^\mu)\varphi, u\rangle\\
&\le -\cE_{m,\alpha}(u,u)-\langle (\lambda-V)w_\mu, u\rangle- \langle (V-V^\mu)\varphi, u\rangle.
\end{split}
\end{align}
By Lemma \ref{lem:mono3} we know that ${\rm supp}(u)\subset \{x \in \R^d: \ x^\mu \not \in B_a\}=B_a(2\mu\mathbf{e}_1)$.
Hence
\begin{align*}
\label{eq:ineqweak2}
\begin{split}
\langle (\lambda-V)w_\mu, u\rangle&=\int_{\R^d}(\lambda-V(x))w_\mu(x)u(x)dx\\
&=-\int_{\R^d}(\lambda-V(x))u(x)^2dx=-\int_{B_a(2\mu\mathbf{e}_1)}(\lambda-V(x))u(x)^2dx
\ge -(\lambda+v)\Norm{u}{2}^2.
\end{split}
\end{align*}
Since the Dirichlet eigenvalues are translation-invariant, we also have
\begin{equation*}
\label{eq:ineqweak3}
\cE_{m,\alpha}(u,u) \ge \lambda_a \Norm{u}{2}^2.
\end{equation*}
Furthermore,
\begin{equation}
\label{eq:ineqweak4}
\langle (V-V^\mu)\varphi, u\rangle=\int_{B_a(2\mu\mathbf{e}_1)}(V(x)-V^\mu(x))\varphi(x)u(x)dx
=\int_{B_a(2\mu\mathbf{e}_1)}(V(x)+v)\varphi(x)u(x)dx\ge 0.
\end{equation}
Combining \eqref{eq:ineqweak}-\eqref{eq:ineqweak4} and using Lemma \ref{lem:mono4}, we obtain
\begin{equation*}
0 \le -(\lambda_a-(\lambda+v))\Norm{u}{2}^2 \le 0.
\end{equation*}
Thus necessarily $u \equiv 0$, which implies $w_\mu(x)\ge 0$ for every $x \in \cU_\mu$. Since $\mu \in (-a,0]$ is arbitrary,
we have that $w_\mu(x)\ge 0$ for all $\mu \in (-a,0]$ and every $x \in \cU_\mu$. Take any $0 \le r_1<r_2<a$ and let $x=
-r_1\mathbf{e}_1$, $y=-r_2\mathbf{e}_1$ and $\mu=-\frac{r_1+r_2}{2}$. Clearly, $y \in \cU_\mu$ and $x=y^\mu$. We have $w_\mu(y)
\ge 0$, which implies
\begin{equation*}
\chi(r_1)-\chi(r_2)=\varphi(x)-\varphi(y)=\varphi(y^\mu)-\varphi(y)\ge 0.
\end{equation*}
We then conclude the proof by making use of the continuity of $\varphi$.
\end{proof}

\section{Appendix}
\subsection{Denseness of $C_{\rm c}^\infty(\R^d)$ in $H^\Phi(\R^d)$}
Here we provide a constructive proof of Proposition \ref{thm:density}. First we show the following growth property of
the second moment of the jump measure over balls.
\begin{lemma}
\label{lem:Rsq}
For every $\Phi \in \cB_0$ we have
\begin{equation*}
\lim_{R \to \infty}\frac{1}{R^2}\int_{B_R}|x|^2j_\Phi(|x|)dx=0.
\end{equation*}
\end{lemma}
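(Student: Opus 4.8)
The plan is to reduce the statement to a one-dimensional assertion by passing to polar coordinates, and then to deduce it from the two integrability relations in Lemma \ref{lem:propj}(3) by a standard Cesàro/Kronecker-type truncation.

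First I would write, using spherical coordinates,
\[
\int_{B_R}|x|^2 j_\Phi(|x|)\,dx = \sigma_d\int_0^R r^{d+1} j_\Phi(r)\,dr,
\]
where $\sigma_d$ denotes the surface area of the unit sphere in $\R^d$, and split the radial integral at $r=1$. The contribution of $[0,1]$ is the finite constant $\sigma_d\int_0^1 r^{d+1}j_\Phi(r)\,dr$ by Lemma \ref{lem:propj}(3), so after division by $R^2$ it tends to $0$ as $R\to\infty$. It therefore suffices to prove that $R^{-2}\int_1^R r^{d+1}j_\Phi(r)\,dr\to 0$.

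For this, set $g(r):=r^{d-1}j_\Phi(r)$, so that $r^{d+1}j_\Phi(r)=r^2 g(r)$ and, again by Lemma \ref{lem:propj}(3), $\int_1^\infty g(r)\,dr<\infty$. Given $\varepsilon>0$, choose $M>1$ with $\int_M^\infty g(r)\,dr<\varepsilon$. For $R>M$ split
\[
\frac{1}{R^2}\int_1^R r^2 g(r)\,dr = \frac{1}{R^2}\int_1^M r^2 g(r)\,dr + \frac{1}{R^2}\int_M^R r^2 g(r)\,dr.
\]
In the first term $r^2\le M^2$, so it is bounded by $(M^2/R^2)\int_1^M g(r)\,dr\to 0$ as $R\to\infty$ with $M$ fixed; in the second term $r^2\le R^2$, so it is bounded by $\int_M^R g(r)\,dr<\varepsilon$. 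Hence $\limsup_{R\to\infty} R^{-2}\int_1^R r^2 g(r)\,dr\le\varepsilon$, and letting $\varepsilon\downarrow 0$ gives the claim.

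There is essentially no serious obstacle here: the only point requiring care is to invoke the correct member of Lemma \ref{lem:propj}(3) for each of the two regimes — the $r^{d+1}$-weighted bound near the origin and the $r^{d-1}$-weighted bound at infinity — both of which are genuinely needed, the former because $j_\Phi$ may blow up at $0$. One could alternatively phrase the second step via the Kronecker lemma applied to the finite measure $g(r)\,dr$ on $[1,\infty)$, but the direct truncation above is shorter.
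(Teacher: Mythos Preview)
Your proof is correct and considerably shorter than the paper's. The paper does not invoke Lemma~\ref{lem:propj}(3) at all; instead it unwinds the definition \eqref{jumpdens} of $j_\Phi$, applies Fubini and a change of variables to express the radial integral as an integral against the L\'evy density $\mu_\Phi$ involving the lower incomplete Gamma function, splits that integral at a scale $s=1$, and controls the two pieces using the integrability conditions on $\mu_\Phi$ together with the monotone density theorem applied to the tail function $\overline{\mu}_\Phi(t)=\int_t^\infty \mu_\Phi(\tau)\,d\tau$. Your route is genuinely different: you stay entirely at the level of $j_\Phi$, use the two ready-made integrability bounds from Lemma~\ref{lem:propj}(3), and finish with a one-line Ces\`aro truncation. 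The paper's approach has the virtue of being self-contained from the L\'evy--Khintchine data, but since Lemma~\ref{lem:propj} is already available your argument is both cleaner and more in the spirit of the surrounding development.
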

\begin{proof}
By using \eqref{jumpdens} we readily get
\begin{align*}
\frac{1}{R^2}\int_{B_R}|x|^2j_\Phi(|x|)dx&=\frac{\sigma_d}{R^2}\int_{0}^Rr^{d+1}j_\Phi(r)dr
=\frac{C_d}{R^2}\int_{0}^R\int_0^{\infty}r^{d+1}t^{-\frac{d}{2}}e^{-\frac{r^2}{4t}}\mu_\Phi(t)dtdr\\
&=\frac{C_d}{R^2}\int_0^{\infty}t^{-\frac{d}{2}}\mu_\Phi(t)\left(\int_{0}^Rr^{d+1}e^{-\frac{r^2}{4t}}dr\right)dt\\
& \stackrel {z=r^2/4t}{=}
\frac{C_d}{R^2}\int_0^{\infty}t\mu_\Phi(t)\gamma\left(\frac{d}{2}+1;\frac{R^2}{4t}\right)dt,
\end{align*}
where $\gamma(s;x):=\int_0^x z^{s-1}e^{-z}dz$ is the standard lower incomplete Gamma function. Writing $s=\frac{R^2}{4t}$,
we obtain
\begin{align*}
\frac{1}{R^2}\int_{B_R}|x|^2j_\Phi(|x|)dx&=C_dR^2\int_0^{\infty}s^{-3}\mu_\Phi\left(\frac{R^2}{4s}\right)
\gamma\left(\frac{d}{2}+1;s\right)ds\\
&=C_dR^2\left( \int_0^{1}+\int_1^{\infty}\right) s^{-3}\mu_\Phi\left(\frac{R^2}{4s}\right)\gamma\left(\frac{d}{2}+1;s\right)ds\\
&=:I_1(R)+I_2(R).
\end{align*}
To handle $I_1(R)$ we use that $\gamma\left(\frac{d}{2}+1;s\right)s^{-\frac{d}{2}-1} \to \frac{2}{d+2}$ as
$s \to 0$ and again make the substitution $s=\frac{R^2}{4t}$ to obtain
\begin{align*}
I_1(R)&\le C_dR^2\int_0^{1}s^{\frac{d}{2}-2}\mu_\Phi\left(\frac{R^2}{4s}\right)ds
= C_d\int_{R^2/4}^{\infty}R^d t^{-\frac{d}{2}}\mu_\Phi(t)dt
\le C_d\int_{R^2/4}^{\infty}\mu_\Phi(t)dt \to 0,
\end{align*}
due to $\int_{1}^{\infty}\mu(t)dt<\infty$.
	
Coming to $I_2(R)$, we use that $\gamma\left(\frac{d}{2}+1;s\right) \to \Gamma\left(\frac{d}{2}+1\right)$ as $s \to \infty$,
leading to
\begin{align*}
I_2(R)&\le C_dR^2\int_{1}^{\infty}s^{-3}\mu_\Phi\left(\frac{R^2}{4s}\right)ds
= \frac{C_d}{R^2}\int_0^{R^2/4}t\mu_\Phi(t)dt.
\end{align*}
Define $\overline{\mu}_\Phi(t):=\int_t^{\infty}\mu_\Phi(\tau)d\tau$. The function $\overline{\mu}_\Phi \in L^1(0,1)$ and is
non-increasing, hence $\lim_{t \to 0}t\overline{\mu}_\Phi(t)=0$ by the monotone density theorem (see, e.g., \cite[Th. 1.7.2]{B89}).
Integration by parts gives
\begin{align*}
I_2(R)& \leq
\frac{C_d}{R^2}\left(-\frac{R^2}{4}\overline{\mu}_\Phi\Big(\frac{R^2}{4}\Big)+\int_0^{R^2/4}\overline{\mu}_\Phi(t)dt\right)
=-\frac{C_d\overline{\mu}_\Phi(R^2/4)}{4}+\frac{C_d}{R^2}\int_0^{R^2/4}\overline{\mu}_\Phi(t)dt.
\end{align*}
Finally, again by the monotone density theorem and using that $\overline{\mu}_\Phi(t) \to 0$ as $t \to \infty$, we obtain
$\lim_{R \to \infty}\frac{C_d}{R^2}\int_0^{R^2/4}\overline{\mu}_\Phi(t)dt=0$,
which gives then $\lim_{R \to \infty}I_2(R)=0$.
\end{proof}

Now we are ready to prove that $C_{\rm c}^\infty(\R^d)$ is dense in $H^\Phi(\R^d)$.
Define $g_n:\R^d \to \R$, $n \in \N$, as a function in $C^\infty_{\rm c}(\R^d)$ such that $g_n(x)=1$ for every
$x \in B_n$, $g_n(x)=0$ for every $x \in \R^d\setminus B_{2n}$, $0 \le g_n(x)\le 1$ for every $x \in B_{2n}
\setminus B_n$ and $|\nabla g_n(x)|\le \frac{C}{n}$, with a constant $C$ independent of $n$. (This can be done by
applying a mollifier to the characteristic function of $B_n$, once we notice that ${\rm dist}(B_n,\partial B_{2n})
=n$ for all $n \in \N$). Furthermore, let $\sequ\varrho$ be a sequence of Friedrichs mollifiers of the form
$\varrho_n(x)=n^d\varrho(nx)$, $\supp\varrho(x)=B_1$. Fix $u \in H^{\Phi}(\R^d)$ and define $u_n:= \varrho_n \ast
(g_nu)$. Clearly, $u_n \in C_{\rm c}^\infty(\R^d)$, moreover,
	\begin{align*}
		\Norm{u_n-u}{2}^2&=\int_{\R^d}\left|\int_{\R^d}\varrho_n(x-y)g_n(x)u(x)dx-u(y)\right|^2dy\\
		&\le 2\int_{\R^d}\left|\int_{\R^d}\varrho_n(x-y)g_n(x)u(x)dx-\int_{\R^d}\varrho_n(x-y)u(x)dx\right|^2dy\\
		&\quad +2\int_{\R^d}\left|\int_{\R^d}\varrho_n(x-y)u(x)dx-u(y)\right|^2dy
		=:2(I^{(1)}_n+I^{(2)}_n).
	\end{align*}
	It is well-known that $I^{(2)}_n:=\Norm{\varrho_n \ast u-u}{2} \to 0$ as $n \to \infty$. Furthermore, by the
generalized Minkowski inequality (see \cite[ineq. 202]{HLP52}),
	\begin{equation*}
		I^{(1)}_n \le \Norm{g_nu-u}{2}^2=\int_{B_{n+1}\setminus B_n}(1-g_n(x))^2|u(x)|^2dx
\le \int_{B_{n+1}\setminus B_n}|u(x)|^2dx \to 0.
	\end{equation*}
	Hence $u_n \to u$ in $L^2(\R^d)$. Let now $\overline{u}_n=g_nu$. We have
	\begin{align*}
		[u_n-u]^2_{\Phi}&=\int_{\R^d}\int_{\R^d}|u_n(y)-u(y)-u_n(x)+u(x)|^2j_\Phi(|x-y|)dxdy\\
		&=\int_{\R^d}\int_{\R^d}|u_n(y)-\overline{u}_n(y)+\overline{u}_n(y)-u(y)-u_n(x)+\overline{u}_n(x)
-\overline{u}_n(x)+u(x)|^2j_\Phi(|x-y|)dxdy\\
		&\le 2[u_n-\overline{u}_n]^2_{\Phi}+2[\overline{u}_n-u]_{\Phi}^2.
	\end{align*}
First consider the second semi-norm. Define $\overline{g}_n=1-g_n$ such that
	\begin{align*}
		[\overline{u}_n-u]_{\Phi}^2&=\int_{\R^d}\int_{\R^d}|\overline{g}_n(x)u(x)-\overline{g}_n(y)u(y)|^2j_\Phi(|x-y|)dxdy\\
		&=2\int_{B_n}\int_{B_{2n}\setminus B_n}|\overline{g}_n(y)u(y)|^2j_\Phi(|x-y|)dxdy\\
		&\quad +2\int_{B_n}\int_{\R^d\setminus B_{2n}}|u(y)|^2j_\Phi(|x-y|)dxdy\\
		&\quad +\int_{B_{2n}\setminus B_n}\int_{B_{2n}\setminus B_n}|\overline{g}_n(x)u(x)-\overline{g}_n(y)u(y)|^2j_\Phi(|x-y|)dxdy\\
		&\quad +2\int_{B_{2n}\setminus B_n}\int_{\R^d\setminus B_{2n}}|\overline{g}_n(x)u(x)-u(y)|^2j_\Phi(|x-y|)dxdy\\
		&\quad +\int_{\R^d\setminus B_{2n}}\int_{\R^d\setminus B_{2n}}|u(x)-u(y)|^2j_\Phi(|x-y|)dxdy\\
		&=2I^{(3)}_n+2I^{(4)}_n+I^{(5)}_n+2I^{(6)}_n+I^{(7)}_n.
	\end{align*}
Since for every $x \in B_n$ we have that $\overline{g}_n(x)=0$ and $\Norm{\nabla \overline{g}_n}{\infty}\le \frac{C}{n}$,
we get
	\begin{equation*}
		I^{(3)}_n \le \frac{C}{n^2}\int_{B_n}\int_{B_{2n}\setminus B_n}|u(y)|^2|x-y|^2j_\Phi(|x-y|)dxdy
\leq \frac{C\Norm{u}{2}}{n^2}\int_{B_{3n}}|x|^2j_\Phi(|x|)dx.
	\end{equation*}
The second bound follows by the observation that since $y \in B_{2n}\setminus B_n$, we have $B_n \subset B_{3n}(y)$.
	Taking the limit as $n \to \infty$, by Lemma \ref{lem:Rsq} we have $\lim_{n \to \infty}I^{(3)}_n =0$.

For the next integral we simply estimate
	\begin{equation*}
		I^{(4)}_n \le \Norm{u}{2}\int_{\R^d \setminus B_{2n}}j_\Phi(|x|)dx,
	\end{equation*}
which on taking the limit gives $\lim_{n \to \infty}I^{(4)}_n=0$, given that
$\int_{\R^d \setminus B_1}j_\Phi(|x|)dx<\infty$.

Next consider $I^{(5)}_n$. Adding and subtracting $\overline{g}_n(y)u(x)$ we have
	\begin{align*}
		I^{(5)}_n &\le 2\int_{B_{2n}\setminus B_n}\int_{B_{2n}\setminus B_n}
|\overline{g}_n(x)-\overline{g}_n(y)|^2|u(x)|^2j_{\Phi(|x-y|)}dxdy\\
		&\qquad +2\int_{B_{2n}\setminus B_n}\int_{B_{2n}\setminus B_n}|\overline{g}_n(x)|^2|u(x)-u(y)|^2j_{\Phi(|x-y|)}dxdy\\
		&\le \frac{2C}{n^2}\int_{B_{2n}\setminus B_n}\int_{B_{2n}\setminus B_n}|x-y|^2|u(x)|^2j_{\Phi(|x-y|)}dxdy\\
		&\qquad +2\int_{B_{2n}\setminus B_n}\int_{B_{2n}\setminus B_n}|u(x)-u(y)|^2j_{\Phi(|x-y|)}dxdy
		=2(I_n^{(8)}+I_n^{(9)}).
	\end{align*}
Since $x \in B_{2n}$, we have that $B_{2n}\subset B_{4n}(x)$ and thus
\begin{equation*}
I_n^{(8)} \le \frac{2C\Norm{u}{2}}{n^2}\int_{B_{4n}}|y|^2j_\Phi(|y|)dy \to 0,
\end{equation*}
again by Lemma \ref{lem:Rsq}. On the other hand, $\lim_{n \to \infty}I^{(9)}_n=0$ by dominated convergence, since
$[u]_{\Phi}^2<\infty$. Hence we have $\lim_{n \to \infty}I^{(5)}_n=0$.

To estimate $I^{(6)}_n$, we add and subtract $u(x)=\bar{H}_n(y)u(x)$ using that $y \in \R^d \setminus B_{2n}$, and
further split off the integral giving
\begin{align*}
I^{(6)}_n &\le 2\int_{B_{2n}\setminus B_n}\int_{B_{3n}\setminus B_{2n}}|\overline{g}_n(x)-\overline{g}_n(y)|^2|u(x)|^2
j_{\Phi}(|x-y|)dxdy\\
&\qquad
+2\int_{B_{2n}\setminus B_n}\int_{\R^d\setminus B_{3n}}|\overline{g}_n(x)-\overline{g}_n(y)|^2|u(x)|^2j_{\Phi}(|x-y|)dxdy\\
&\qquad +2\int_{B_{2n}\setminus B_n}\int_{\R^d\setminus B_{2n}}|\overline{g}_n(x)|^2|u(x)-u(y)|^2j_{\Phi}(|x-y|)dxdy
=2(I^{(10)}_n+I^{(11)}_n+I^{(12)}_n).
\end{align*}
Concerning $I_n^{(10)}$, using that $x \in B_{3n} \setminus B_{2n}$ and so $B_{2n}\subset B_{5n}(x)$, we have,
\begin{equation*}
	I_n^{(10)} \le \frac{C\Norm{u}{2}}{n^2}\int_{B_{5n}}|y|^2j_{\Phi}(|y|)dy \to 0,
\end{equation*}
by Lemma \ref{lem:Rsq}. On the other hand, if $x \in \R^d\setminus B_{3n}$ and $y \in B_{2n}\setminus B_n$, we have
$|x-y| \ge 1$ and thus
\begin{equation*}
I^{(11)}_n \le 8\left(\int_{\R^d \setminus B_{3n}}|u(x)|^2dx\right)\left(\int_{\R^d \setminus B_1}j_\Phi(|y|)dy\right)
\to 0
\end{equation*}
since $u \in L^2(\R^d)$. Furthermore, $I_n^{(12)} \to 0$ by dominated convergence, as $[u]^2_{\Phi}<\infty$. Finally,
$I^{(7)}_n \to 0$ simply by the dominated convergence theorem. Combining the limits $I^{(i)}_n \to 0$ for $i=3,\dots,7$,
we have $\lim_{n \to \infty}[\overline{u}_n-u]^2_{\Phi}=0$. Furthermore	$[\overline{u}_n]_{\Phi}\le [u]_{\Phi}+
[\overline{u}_n-u]_{\Phi}$ and so $\limsup_{n \to \infty}[u_n]_{\Phi}\le [u]_{\Phi}<\infty$, which leads to
$[\overline{u}_n] \le C$ for a constant $C>0$ independent of $n$.

Note that by the Jensen inequality
\begin{align*}
[u_n-\overline{u}_n]^2_{\Phi}
&=\int_{\R^d}\int_{\R^d}|u_n(y)-\overline{u}_n(y)-u_n(x)+\overline{u}_n(x)|^2j_\Phi(|x-y|)dxdy\\
&=\int_{\R^d}\int_{\R^d}\left|\int_{\R^d}\varrho_n(z)\overline{u}_n(y-z)dz-\overline{u}_n(y)
-\int_{\R^d}\varrho_n(z)\overline{u}_n(x-z)dz+\overline{u}_n(x)\right|^2j_\Phi(|x-y|)dxdy\\
&=\int_{\R^d}\int_{\R^d}\left|\int_{\R^d}\varrho_n(z)(\overline{u}_n(y-z)-\overline{u}_n(y)
-\overline{u}_n(x-z)+\overline{u}_n(x))dz\right|^2j_\Phi(|x-y|)dxdy\\
&\le \int_{\R^d}\int_{\R^d}\int_{\R^d}\varrho_n(z)|\overline{u}_n(y-z)-\overline{u}_n(y)-\overline{u}_n(x-z)
+\overline{u}_n(x)|^2j_\Phi(|x-y|)dzdxdy\\
&\le 3\int_{\R^d}\int_{\R^d}\int_{\R^d}\varrho_n(z)(|\overline{u}_n(y-z)-u(y-z)-\overline{u}_n(x-z)+u(x-z)|^2\\
&\qquad +|u(y-z)-u(x-z)-u(y)+u(x)|^2+|\overline{u}_n(y)-u(y)-\overline{u}_n(x)+u(x)|^2)j_\Phi(|x-y|)dzdxdy\\
&\le 6[\overline{u}_n-u]^2_{\Phi}+3\int_{\R^d}\varrho_n(z)\int_{\R^d \times \R^d}|(u(y-z)-u(x-z)-u(y)+u(x))
\sqrt{j_\Phi(|x-y|)}|^2.
\end{align*}
Denote $G(x,y)=(u(y)-u(x))\sqrt{j_{\Phi}(|x-y|)} \in L^2(\R^d \times \R^d)$. By the above we then have
\begin{align*}
[u_n-\overline{u}_n]^2_{\Phi}&\le 6[\overline{u}_n-u]_{\Phi}^2+3\int_{\R^d}\Norm{G(\cdot-z,\cdot-z)-G}
{L^2(\R^d \times \R^d)}^2\varrho_n(z)dz\\
&=6[\overline{u}_n-u]_{\Phi}^2+3n^d\int_{\R^d}\Norm{G(\cdot-z,\cdot-z)-G}{L^2(\R^d \times \R^d)}^2\varrho(nz)dz\\
&=6[\overline{u}_n-u]_{\Phi}^2+3\int_{\R^d}\Norm{G\left(\cdot-\frac{z}{n},\cdot-\frac{z}{n}\right)
-G}{L^2(\R^d \times \R^d)}^2\varrho(z)dz\\
&\le 6[\overline{u}_n-u]_{\Phi}^2
+3\Norm{\varrho}{\infty}\int_{B_1}\Norm{G\left(\cdot-\frac{z}{n},\cdot-\frac{z}{n}\right)-G}
{L^2(\R^d \times \R^d)}^2dz.
\end{align*}
Since $G \in L^2(\R^d \times \R^d)$, the function $z \in \R^d \mapsto
\Norm{G\left(\cdot-z,\cdot-z\right)-G}{L^2(\R^d \times \R^d)}^2 \in \R$ is continuous and thus
$\sup_{z \in B_1}\Norm{G\left(\cdot-z,\cdot-z\right)-G}{L^2(\R^d \times \R^d)}^2<\infty$.
Hence we can use the dominated convergence theorem to obtain
\begin{equation*}\label{eq:firstsemin}
[u_n-\overline{u}_n]^2_{\Phi} \le 6[\overline{u}_n-u]_{\Phi}^2+3\Norm{\varrho}{\infty}
\int_{B_1}\Norm{G\left(\cdot-\frac{z}{n},\cdot-\frac{z}{n}\right)-G}{L^2(\R^d \times \R^d)}^2dz \to 0
\end{equation*}
as $n \to \infty$.

We note that the strategy of the proof above follows \cite[Th. 3.1]{BGV21}. In particular, in this proof we actually
obtained a truncation result in the spirit of \cite[Appx. B]{BGV21}.

\subsection{Equivalent expression}
We prove relation \eqref{moreequiv}.
First of all, since $\cS(\R^d)\subset C^2(\R^d)\cap L^\infty(\R^d)$, Lemma \ref{lem:existence1} ensures that $\Phi(-\Delta)u$
is well-defined. To see that we can exchange the order of Fourier transform and the integral in the definition of
$\Phi(-\Delta)u$, first we show that
\begin{equation*}
\int_{\R^d}\int_{\R^d}|u(x+h)-2u(x)+u(x-h)|j_{\Phi}(|h|)dhdx<\infty.
\end{equation*}
Let $D^2u$ be the Hessian matrix of $u$ and denote $|D^2u(x)|=\sup_{h \in \R^d \setminus \{0\}}\frac{|D^2u(x)h|}{|h|}$.
We split off the integral like
\begin{align*}
\int_{\R^d}\int_{\R^d}|u(x+h)-2u(x)+u(x-h)|j_{\Phi}(|h|)dhdx&=\int_{\R^d}\int_{B_1}|u(x+h)-2u(x)+u(x-h)|j_{\Phi}(|h|)dhdx\\
&\quad +\int_{\R^d}\int_{\R^d \setminus B_1}|u(x+h)-2u(x)+u(x-h)|j_{\Phi}(|h|)dhdx\\
&=I_1+I_2.
\end{align*}
Note that there exists a function $\theta: (x,h) \in \R^d \times \R^d \mapsto \theta(x,h) \in B_1(x)$ such that
\begin{align*}
I_1 &\le \int_{\R^d}\int_{B_1}|D^2u(\theta(x,h))||h|^2j_{\Phi}(|h|)dhdx\\
&=\int_{B_2}\int_{B_1}|D^2u(\theta(x,h))||h|^2j_{\Phi}(|h|)dhdx
 + \int_{\R^d \setminus B_2}\int_{B_1}|D^2u(\theta(x,h))||h|^2j_{\Phi}(|h|)dhdx\\
&=I_3+I_4.
\end{align*}
Concerning $I_3$, observe that if $x \in B_2$, then $\theta(x,h) \in B_3$ for every $h \in B_1$. Hence we have
\begin{equation*}
I_3 \le 3^d\omega_d\Norm{|D^2u|}{L^\infty(B_3)}\int_{B_1}|h|^2j_{\Phi}(|h|)dh<\infty,
\end{equation*}
where $\omega_d$ is the Lebesgue measure of $B_1$. On the other hand, if $x \in \R^d \setminus B_2$, then $\theta(x,h)
\in \R^d \setminus B_{|x|-1}$ and then we have
\begin{equation*}
I_4 \le 3^d\omega_d\left(\int_{\R^d \setminus B_2}\Norm{|D^2u|}{L^\infty(\R^d \setminus B_{|x|-1})}dx\right)
\left(\int_{B_1}|h|^2j_{\Phi}(|h|)dh\right).
\end{equation*}
However, since $u \in \cS(\R^d)$, it is clear that
\begin{equation*}
|z|^{2}|D^2u(z)|\le d\sup_{z \in \R^d \setminus B_1}|z|^2
\max_{1\le i,j \le d}\left|\frac{\partial^2 \, u}{\partial x_i \, \partial x_j}(z)\right|
=d\max_{1\le i,j \le d}\sup_{z \in \R^d \setminus B_1}|z|^2\left|\frac{\partial^2 \, u}{\partial x_i \,
\partial x_j}(z)\right|=:C_{\cS(\R^d)}(u)<\infty
\end{equation*}
for all $z \in \R^d \setminus B_{|x|-1}$, which leads to
\begin{equation*}
|D^2u(z)|\le \frac{C_{\cS}(u)}{|z|^2}\le \frac{C_{\cS}(u)}{(|x|-1)^2}, \quad z \in \R^d \setminus B_{|x|-1}.
\end{equation*}
On taking the supremum on $z$ we obtain
\begin{equation*}
\Norm{|D^2u|}{L^\infty(\R^d \setminus B_{|x|-1})}\le \frac{C_{\cS(\R^d)}(u)}{(|x|-1)^2}.
\end{equation*}
In particular,
\begin{equation*}
\int_{\R^d \setminus B_2}\Norm{|D^2u|}{L^\infty(\R^d \setminus B_{|x|-1})}dx
\le C_{\cS(\R^d)}(u)\int_{\R^d \setminus B_{2}}\frac{dx}{(|x|-1)^{2}}<\infty,
\end{equation*}
implying $I_4<\infty$. Finally, considering $I_2$ we have
\begin{equation*}
I_2 \le 4\Norm{u}{\infty}\int_{\R^d \setminus B_1}j_\Phi(|h|)dh<\infty.
\end{equation*}
Hence we can use Fubini's theorem and the properties of the Fourier transform to obtain
\begin{equation*}
\cF[\Phi(-\Delta)u](\xi)=-\frac{\widehat{u}(\xi)}{2}\int_{\R^d}(e^{i(h\cdot\xi)}+e^{-i(h\cdot\xi)}-2)
j_{\Phi}(|h|)dh=\widehat{u}(\xi)\int_{\R^d}(1-\cos(h \cdot \xi))j_{\Phi}(|h|)dh.
\end{equation*}
By a similar argument as in Lemma \ref{lem:GagliardotoFourier} and also using Lemma \cite[Lem. 2.1]{JS05},
it follows then that
\begin{equation*}
\int_{\R^d}(1-\cos(h \cdot \xi))j_{\Phi}(|h|)dh=f(|\xi|^2),
\end{equation*}
which completes the proof.

\end{document}